\documentclass[onefignum,onetabnum]{siamart190516}



\usepackage{lipsum}
\usepackage{amsfonts}
\usepackage{graphicx}
\usepackage{graphics}
\usepackage{epstopdf}
\usepackage{amsmath,amsfonts,booktabs,multirow}
\usepackage{algorithmic}
\usepackage{caption}
\ifpdf
  \DeclareGraphicsExtensions{.eps,.pdf,.png,.jpg}
\else
  \DeclareGraphicsExtensions{.eps}
\fi


\DeclareMathOperator*{\Argmin}{Argmin}
\DeclareMathOperator*{\Argmax}{Argmax}
\newcommand{\N}{\mathbb{N}}
\newcommand{\Z}{\mathbb{Z}}
\newcommand{\R}{\mathbb{R}}
\newcommand{\Q}{\mathbb{Q}}

\newsiamremark{assumption}{Assumption}
\newsiamremark{remark}{Remark}
\newsiamremark{hypothesis}{Hypothesis}
\crefname{hypothesis}{Hypothesis}{Hypotheses}
\newsiamthm{claim}{Claim}

\headers{Decomposition Methods for Global Solutions of MILPs}{K. Sun, M. Sun, and W. Yin}

\title{Decomposition Methods for Global Solution of Mixed-Integer Linear Programs 
\thanks{Submitted March 29, 2022; revised December 21, 2022 and September 1, 2023; accepted December 28, 2023.
}}

\author{Kaizhao Sun
\thanks{School of ISyE, Georgia Institute of Technology (\email{ksun46@gatech.edu}).}
\and 
Mou Sun
\thanks{Decision Intelligence Lab, DAMO Academy, Alibaba Group
  (\email{mou.sunm@alibaba-inc.com}).}
\and 
Wotao Yin
\thanks{Decision Intelligence Lab, DAMO Academy, Alibaba Group (US)
  (\email{wotao.yin@alibaba-inc.com}).}
}

\usepackage{amsopn}


\ifpdf
\hypersetup{
  pdftitle={Decomposition Methods for Global Solutions of MILPs},
  pdfauthor={K. Sun, M. Sun, and W. Yin}
}
\fi

\begin{document}

\maketitle

\begin{abstract}
This paper introduces two decomposition-based methods for two-block mixed-integer linear programs (MILPs), which aim to take advantage of separable structures of the original problem by solving a sequence of lower-dimensional MILPs. The first method is based on the $\ell_1$-augmented Lagrangian method (ALM), and the second one is based on a modified alternating direction method of multipliers (ADMM). In the presence of certain block-angular structures, both methods create parallel subproblems in one block of variables, and add nonconvex cuts to update the other block; they converge to globally optimal solutions of the original MILP under proper conditions. 
Numerical experiments on  three classes of MILPs demonstrate the advantages of the proposed methods on structured problems over the state-of-the-art MILP solvers.
\end{abstract}

\begin{keywords}
  Mixed-integer linear programs, decomposition method, augmented Lagrangian method, alternating direction method of multipliers
\end{keywords}

\begin{AMS}
  49M27, 
  90C11, 
  90C26  
\end{AMS}

\section{Introduction}
We consider the generic two-block mixed-integer linear program (MILP): 
\begin{align}\label{eq: MIP}
	p^* := \min_{x,z}\{ c^\top x + g^\top z:~ Ax + Bz = 0, \ x\in X, \ z \in Z\}, 
\end{align}
with decision variables $x\in \R^n$ and $z\in\R^d$, rational parameters $c \in \Q^n$, $g\in \Q^d$, $A \in \Q^{m \times n}$, and $B \in \Q^{m \times d}$, and compact sets $X\subset \R^n$ and $Z\subset \R^d$. Interesting to us is when certain entries of $x$ and $z$ must be integers. The use of the zero vector in $Ax+Bz=0$ has no loss of generality\footnote{Any $Ax+Bz=b$ can be equivalently expressed as $Ax+\tilde{B}\tilde{z} = 0$ where $\tilde{B} = [B, b]$ and $\tilde{z} = [z^\top, -1]^\top$.}. 

Despite the fast development of general-purpose MILP solvers over the past few decades \cite{achterberg2013mixed, bixby2012brief}, large MILPs still pose nontrivial challenges. This paper develops parallelizable methods for MILP \eqref{eq: MIP} with a decomposable structure.

\subsection{Decomposable Structure}
Consider the classic block-angular structure:
\begin{equation}\label{eq: block}
[A|B] = 
\left[ \begin{array}{ccc|c} 
    A_1 &        &     & B_1 \\ 
        &   \ddots&     & \vdots\\
        &          & A_P & B_P
\end{array}\right],\quad  X=X_1\times \dots \times X_P,
\end{equation}
where $A$ is a block-diagonal matrix, rows of $B$ are divided into groups accordingly, and the constraint $x\in X$ reduces to $x_1\in X_1,~\dots,~x_P\in X_P$. This structure naturally arises in two-stage optimization problems \cite{benders1962partitioning}, where the first-stage variable $z$ couples with the second-stage variables $x_i$ via $A_ix_i + B_iz = 0$ in each scenario $i \in [P]:=\{1,\cdots, P\}$. In decentralized optimization \cite{shi2014linear}, the consensus between neighboring agents $i$ and $j$ in a graph is written as $x_i = z_{ij}$ and $x_j = z_{ij}$, which is a special case of \eqref{eq: block}.

\subsection{ALM and ADMM}
When $X$ and $Z$ are convex, we can solve \eqref{eq: MIP} by the augmented Lagrangian method (ALM)~\cite{hestenes1969multiplier,powell1969method} and the alternating direction method of multipliers (ADMM)~\cite{gabay1976dual,glowinski1975approximation}. Define the augmented Lagrangian (AL) function: 
\begin{equation}\label{eq: AL}
	\mathcal{L}(x,z,\lambda, \rho) := c^\top x+ g^\top z + \langle \lambda, Ax+Bz\rangle + \rho \sigma(Ax+Bz), \ \lambda \in \R^m, \ \rho > 0.
\end{equation}
Classic ALM and ADMM use the penalty function 
$\sigma(\cdot) = \frac{1}{2}\|\cdot\|^2_2$.
The standard ALM is the iteration:
\begin{subequations}
    \begin{align}
       (x^{k+1},z^{k+1}) \in  & \Argmin_{x \in X,z\in Z} \mathcal{L}(x, z,\lambda^k, \rho^k),\\ \label{eq: alm-subproblem}
       \lambda^{k+1} = & \lambda^k + \rho^k (Ax^{k+1} +Bz^{k+1}),~~\text{update }\rho^{k+1}\text{ if needed.}
    \end{align}
\end{subequations}
This method cannot directly solve \eqref{eq: MIP} or take advantage of structure \eqref{eq: block}. {To make the former possible, a function $\sigma(\cdot)$ supporting the exact--penalty property is required. This paper will introduce ways to achieve the latter.}

The standard ADMM is the iteration: 
\begin{subequations}\label{eq: raw admm}
	\begin{align}
		x^{k+1} \in  & \Argmin_{x \in X} \mathcal{L}(x, z^k,\lambda^k, \rho),\label{eq: raw admm x} \\
		z^{k+1} \in  & \Argmin_{z \in Z} \mathcal{L}(x^{k+1}, z,\lambda^k, \rho), \label{eq: raw admm z}\\
		\lambda^{k+1} = & \lambda^k + \rho (Ax^{k+1} +Bz^{k+1}). \label{eq: raw admm dual}
	\end{align}
\end{subequations}
If structure \eqref{eq: block} is present, then the step \eqref{eq: raw admm x} decomposes into $P$ independent lower-dimensional subproblems. {However, \eqref{eq: raw admm} can diverge or only converge to an infeasible solution for \eqref{eq: MIP}.} 

\subsection{Our Approach}
This paper introduces two methods for \eqref{eq: MIP} that can yield parallel subproblems by utilizing \eqref{eq: block}, i.e., they solve a set of independent, parallel MILP subproblems, each involving one $x_i \in X_i$. The methods also offer a decomposition between $x$ and $z$ when $P=1$. Both methods converge to global solutions of \eqref{eq: MIP} under proper conditions. Below we present them at a high level.

We use the \underline{$\ell_1$ penalty} $\sigma(\cdot) = \|\cdot\|_1$ in the AL function \eqref{eq: AL} {due to its exact penalization property and component-wise separability.} Define the \textit{AL dual function} and the \textit{AL dual problem} for \eqref{eq: MIP} as: 
\begin{align}
	& d(\lambda, \rho) := \min_{x\in X, z\in Z}\mathcal{L}(x,z,\lambda, \rho), \label{eq: dual function}\\
	& \sup_{ \lambda \in \R^m, \rho >0} d(\lambda, \rho).\label{eq: dual problem}
\end{align}
Note that penalty parameter $\rho$ is not fixed in \eqref{eq: dual problem}.  
Feizollahi et al. \cite{feizollahi2017exact} establishes that the primal \eqref{eq: MIP} and the dual \eqref{eq: dual problem} problems have the same optimal objective. 

Our \emph{first method} solves \eqref{eq: dual problem} based on ALM:
\begin{align*}
	\text{Step 1:}\quad & \text{solve \eqref{eq: dual function} by subroutine AUSAL}: (x^{k+1},z^{k+1})\gets \texttt{AUSAL}(\lambda^k,\rho^k,\epsilon); \label{eq: AL iter1}\\
 	\text{Step 2:}\quad & \text{compute}~\lambda^{k+1},\rho^{k+1}.
\end{align*}
AUSAL stands for ``\textbf{A}lternating \textbf{U}pdate for the \textbf{S}harp \textbf{AL} function'', where ``sharp" refers to the $\ell_1$ penalty. AUSAL is itself an iterative algorithm, alternating between the updates of $x$ and $z$:
\begin{align*}
	\text{Step 1a:} \quad & \text{let $x^{t+1} \in \Argmin_{x\in X}\mathcal{L}(x,z^t,\lambda^k, \rho^k)$;}\\
 	\text{Step 1b:} \quad & \text{add a \texttt{ReverseNormCut}($x^{t+1},z^t$) to $z$-subproblem and solve for $z^{t+1}$}.
\end{align*} 	

The $x$-subproblems and $z$-subproblems are MILPs in $x$ and $z$, respectively. A \emph{reverse norm cut} is a nonconvex minorant, which we borrow from global Lipschitz minimization~\cite{mayne1984outer}. At each iteration, we add a cut to the $z$-subproblem, making the subproblem closer to \eqref{eq: dual function}, so that the AUSAL iterations asymptotically solve \eqref{eq: dual function}. When a certain optimality gap falls under a tolerance $\epsilon>0$, the iterations stop and return a pair $(x^{k+1},z^{k+1})$ to complete ALM's Step 1. {The pair  $(x^{k+1},z^{k+1})$ is then used to update $(\lambda^{k+1}, \rho^{k+1})$ in Step 2.} Overall, the first method uses double loops, and each outer iteration calls AUSAL in Steps 1a-1b. Note that the reverse norm cuts generated in one call to AUSAL become useless after $\lambda$ and $\rho$ change, so we cannot use them in the next outer iteration. Details of this method are given in Section \ref{Section: ALM}.

Our \emph{second method} uses a single loop and a different kind of cut, which stays valid across iterations. The method resembles ADMM, cycling through the updates of $x$, $z$, and Lagrange multipliers and penalty parameter:
\begin{subequations}\label{eq: highlevel admm}
 	\begin{align*}
 		\text{Step 1:} \quad & \text{let $x^{k+1} \in \Argmin_{x\in X}\mathcal{L}(x,z^k,\mu^k, \beta^k)$};\\
 		\text{Step 2:} \quad & \text{add an \texttt{ALCut}($x^{k+1}, z^k, \mu^k, \beta^k$) to $z$-subproblem and solve for $z^{k+1}$}; \\
 		\text{Step 3:} \quad & \text{compute } \mu^{k+1},\beta^{k+1}.
 	\end{align*}
\end{subequations}
The $x$-subproblems and $z$-subproblems are MILPs in $x$ and $z$, respectively. We add \textit{AL cuts} (despite its name, we use them only in this ADMM-like method) so that the $z$-subproblems' objectives approximate a certain value function with increasing accuracies. The dual pair in this method is denoted by $(\mu, \beta)$ instead of $(\lambda, \rho)$ in the first method. Details of the second method are given in Section \ref{Section: ADMM}.

Compared to existing ALM and ADMM methods for convex and nonlinear optimization, our new treatments include: adding cuts to $z$-subproblems, properly updating penalty parameters, and different convergence analyses. Next, we review related methods and compare them to ours.

\subsection{Related Works}
\subsubsection{Augmented Lagrangian Decomposition and Challenges}\label{section: challenges}
The classic ALM uses the squared penalty $\sigma(\cdot)=\frac{1}{2}\|\cdot\|^2_2$. Its convergence is well-established for convex programs \cite{rockafellar1973multiplier,rockafellar1976augmented} and smooth nonlinear programs \cite{andreani2007augmented,bertsekas2014constrained}. {We focus on works where $X$ and $Z$ contain discrete variables.}

With MILP \eqref{eq: MIP} and nonconvex nonsmooth problems, ALM algorithms work under the theory of nonconvex AL duality \cite{gasimov2002augmented,huang2003unified,rockafellar2009variational}. 
In particular, the theory states that $d(\lambda, \rho)$ in \eqref{eq: dual function} is concave and upper-semicontinuous in $(\lambda,\rho)$ \cite[Exercise 11.56]{rockafellar2009variational}, {so it can be maximized by a method for nonsmooth convex optimization. Specifically, works \cite{burachik2006modified,burachik2010primal,burachik2013inexact} introduce modified subgradient methods, and \cite{cordova2020revisiting} describes an inexact bundle method. When \eqref{eq: dual function} has a solution, dual convergence holds under proper conditions; convergence of the primal iterates can also be established. However, the challenge lies in solving \eqref{eq: dual function} with its nonconvex mixed-integer constraint sets $X$ or $Z$. All the methods in \cite{burachik2006modified,burachik2010primal,burachik2013inexact,cordova2020revisiting} assume a (nearly) exact oracle of \eqref{eq: dual function}, that is, finding a (nearly) global minimizers of \eqref{eq: dual function}. Paper \cite{cordova2020revisiting} proposes to alternatively update $x$ and $z$ variables, 
but this approach may get stuck at a non-stationary point due to the nonsmooth term $\|Ax+Bz\|_1$ in \eqref{eq: AL}; see \cite{xu2017globally} for an example. For the special case of \eqref{eq: MIP} with $B = I$, $g=0$, and $Z$ being a linear subspace, work \cite{boland2019parallelizable} applies a proximal ALM to its convex relaxation, but it cannot guarantee primal feasibility. In comparison, our first method ensures solving \eqref{eq: dual function} and thus finding a global solution to MILP \eqref{eq: MIP} under proper conditions.

\subsubsection{ADMM and Challenges}\label{section: challenges ADMM} 
Since ADMM updates $x$ and $z$ separately, it tends to have cheaper updates than ALM and, further thanks to the structure \eqref{eq: block}, ADMM generates smaller parallelizable $x_i$-updates. Although ADMM is structually similar to ALM, its convergence analysis is different and trickier than that of ALM.

ADMM and its variants under convexity have matured in the last decade~\cite{eckstein1992douglas,gabay2024applications,monteiro2013iteration}. Their analyses are based on assembling convex inequalities, which clearly fail to hold for MILP subproblems. There is little we can borrow there to develop our method. Recently, ADMMs for discrete and mixed-integer problems started to appear. Paper \cite{yao2019admm} reformulates a vehicle routing problem as a multi-block MILP and applies ADMM with the typical quadratic penalty. Thanks to their binary variables, their mixed-integer quadratic program (MIQP) subproblems reduce to MILPs. The formulations studied in \cite{alavian2017improving,kanno2018truss,kanno2018alternating,takapoui2017alternating,takapoui2020simple,yadav2016new} can all reduce to the form $\min_{x} \{f(x)~|~ x\in X \cap Z \}$, where $f$ and $X$ are continuous but $Z$ is a discrete set. Introduce variable $z$ for the reformulation $\min_{x,z} \{f(x)~|~ x = z, x\in X, z\in Z \}$, and we can apply ADMM. The $x$-subproblem is convex (except in \cite{kanno2018alternating}, where it is a nonlinear program), and the $z$-subproblem reduces to a projection onto a discrete set, which may admit a closed-form formula. These works report encouraging numerical results and discuss penalty parameter tuning and restart heuristics. Unfortunately, none of them comes with convergence guarantees to global solutions. In fact, there are examples on which ADMM either diverges or converges to local or infeasible solutions.

ADMMs for nonconvex continuous programs have also been studied \cite{jiang2019structured,melo2017iteration,sun2019two,wang2015global}. These works show convergence to stationary points for problems satisfying certain \emph{structural conditions}. For example, \cite{wang2015global} assumes 1) $\mathrm{Im}(A) \subseteq \mathrm{Im}(B)$, and 2) $Z = \R^m$ (they fail to hold for the above reformulation when $Z\neq\R^m$). Paper~\cite{wu2018ell} introduces an ADMM to solve problems involving binary constraints $x\in \{0,1\}^n$, which is reformulated as the intersection of $[0,1]^n$ and a shifted $\ell_p$-sphere. The resulting ADMM converges to a stationary point of a perturbed problem that satisfies the aforementioned two conditions. Compared to the existing ADMMs, our second method is the first attempt towards obtaining global solutions of MILPs.

\subsubsection{Two-stage stochastic MILP}\label{section: other literature}
Two-stage stochastic programs give rise to the block-angular structure \eqref{eq: block}. A recurring theme is Benders decomposition \cite{benders1962partitioning,van1969shaped}. In a typical iteration, given the first-stage variable $z$, the second-stage problem, or recourse problem, in $x$ is solved, and valid inequalities, or \textit{Benders cuts}, are generated to approximate the first stage's value function, $Q(z) = \min_{x\in X}\{c^\top x:~Ax = -Bz\}$. When the problem has continuous recourse, i.e., when $X$ is a polyhedron, Benders cuts are sufficient to guarantee convergence  \cite{kall1994stochastic} due to the convexity and finite piece-wise linearity of $Q$. When $X$ contains discrete decisions, $Q$ is not convex or continuous in general \cite{blair1977value}. Many works study binary $Z$~\cite{angulo2016improving, gade2014decomposition, laporte1993integer, ntaimo2013fenchel, sen2005c, sen2006decomposition, sherali2002modification}, and establish finite convergence. When $Z$ contains general mixed-integer variables, one usually needs to invoke a customized branch-and-cut algorithm for the first-stage problem \cite{ahmed2004finite, bodur2017strengthened, caroe1999dual, chen2011finite, chen2012computational, chen2022generating, qi2017ancestral} or introduce additional variables to formulate certain nonlinear approximation of the value function \cite{ahmed2019stochastic, caroe1998shaped}. The two approaches share some similarities. In theory, both linear and nonlinear cuts are trying to approximate the value function, and in practice, both approaches rely on the fast development of general-purpose MILP solvers and integration with modeling languages. The first one is usually implemented with solvers' callback functions, and the second one calls solvers a series of times. Our proposed methods take the second approach. Other approaches are also used to solve the first-stage problem, including heuristics, constraint program, and column generation; we refer interested readers to the survey~\cite{rahmaniani2017benders}.

Besides Benders cuts, there are cuts based on the Lagrangian duality for mixed-integer recourse. Zou et al. \cite{zou2019stochastic} propose Lagrangian cuts and strengthened Benders cuts for multistage stochastic programs with binary state variables. Chen and Luedtke \cite{chen2022generating} propose a normalization perspective 
for generating Lagrangian cuts and accelerate the cut generation process by focusing on a restricted subspace. Benders cuts and Lagrangian cuts have also been combined \cite{rahmaniani2020benders} and extended to solve mixed-integer nonlinear programs \cite{li2018improved, li2019generalized}. 

More recently, nonlinear cuts based on the augmented Lagrangian duality have been studied. Ahmed et al. \cite{ahmed2019stochastic} propose reverse norm cuts and AL cuts to approximate Lipschitz value functions. Reverse norm cuts date back to global Lipschitz minimization \cite{mayne1984outer}, and AL cuts are derived from exact penalization established by Feizollahi et al. \cite{feizollahi2017exact}. Later, Zhang and Sun \cite{zhang2019stochastic} use the exact penalization property as a workaround to the complete recourse condition; they further propose generalized conjugacy cuts, which unify reverse norm cuts and AL cuts, and study iteration complexities of stochastic dual dynamic programs (SDDP). 

Our proposed methods are motivated by \cite{ahmed2019stochastic, zhang2019stochastic}. 
The proposed AUSAL subroutine applies reverse norm cuts to the augmented Lagrangian function, and the proposed ADMM variant uses AL cuts. Our iteration complexity analysis is inspired by \cite{zhang2019stochastic}. We highlight some key differences and our contributions in the next subsection. 

Another related work \cite{van2020converging} uses \textit{scaled cuts}, a class of linear cuts derived by scaling AL cuts, within the Benders framework. The sequence of scaled cut closures converges to the convex envelope of the value function. Despite promising theoretical properties, computing scaled cuts is tricky, involving the combination of fixed-point iterations with a row generation scheme or cutting plane techniques. To speed up convergence, a heuristic is used to generate cuts that are only locally valid.

\subsection{Contributions}
Building on and extending the tools developed in \cite{ahmed2019stochastic, zhang2019stochastic}, we propose two algorithms for general two-block MILPs that take advantage of the structure \eqref{eq: block} with convergence guarantees to globally optimal solutions. 

{The first algorithm is an ALM where each subproblem is approximately solved by AUSAL, for which we give iteration complexity estimates. We further describe two approaches to obtain an approximate global solution of MILP \eqref{eq: MIP}:} (i) applying AUSAL to the penalty formulation, i.e., keeping $\lambda=0$ in \eqref{eq: dual function} and increasing the penalty parameter to reach exact penalty, and (ii) {using AUSAL as an oracle to solve the augmented Lagrangian dual problem and updating both Lagrange multipliers and the penalty parameter.} Under (ii), we provide two variants of the subgradient method.

Our second algorithm is a variant of ADMM and utilizes AL cuts. 
Works \cite{ahmed2019stochastic, zhang2019stochastic} both need the exact solutions to the augmented Lagrangian dual problem in $x$, which requires solving a sequence of augmented Lagrangian relaxations in every iteration. 
In contrast, we generate an AL cut by solving a \textit{single} augmented Lagrangian relaxation in variable $x$ in each iteration; see Section \ref{sec: al cuts} for more details. Our ADMM approach is similar to the strengthened augmented Benders cut 
briefly described in \cite[Section 4.1]{ahmed2019stochastic}; however, they do not update dual information, and their analysis cannot explain its convergence. We fill this gap by (i) giving conditions regarding the sequence of multipliers and penalty parameter that are sufficient for convergence, and (ii) establishing a finite convergence to an $\epsilon$-solution of MILP \eqref{eq: MIP}. Our analysis of ADMM using AL cuts appears to be new, and our method also generalizes ADMM from convex and nonlinear optimization to discrete optimization.

Both methods take advantage of \eqref{eq: block} to decompose the $x$-subproblem into smaller independent $x_i$-subproblems, which are amenable for parallel computing. We conduct numerical experiments on {three classes of MILPs to evaluate the practical performance of the proposed methods. Surprisingly, they exhibit advantages on structured problems over the state-of-the-art MILP solvers.}

A disadvantage of our methods is that the $z$-subproblem grows in size as more cuts are added, making it increasingly difficult to solve. This is a common issue in cut-based MILP methods. We believe, however, we can alleviate this issue by generating more efficient cuts, applying row generation, or using other means to solve the $z$-subproblem efficiently. {In fact, our numerical experiments suggest that this issue is manageable when the dimension of $Z$ is mild.} Note that we do not see the proposed methods as a replacement for MILP solvers but a means to scale them to larger MILP instances, especially for those with block separable substructures. 

\subsection{Notation and Organization}
We let $\R$, $\Z$, $\Q$, and $\N$ denote the sets of real, integer, rational, and natural numbers. We write $[P]=\{1,\cdots,P\}$. For a vector $x \in \R^n$, use $\|x\|_p$ as the $\ell_p$-norm of $x$ for $1\leq p\leq \infty$. The inner product of $x,y\in \R^n$ is denoted by $\langle x,y \rangle$ or $x^\top y$. 
For a matrix $A \in \R^{m\times n}$, $\|A\|_p$ denotes its induced (operator) norm, and $\mathrm{Im}(A)$ denotes its column space. We introduce  $\overline{B}_p(x;R) = \{y\in \R^n:~\|x-y\|_p\leq R\}$ and $D_p(X) =\sup\{\|x-y\|_p:~x,y\in X\}$. {We call a set \emph{mixed-integer-linear (MIL) representable} if it can be described by a finite number of mixed-integer variables and linear constraints; a function is MIL representable if its epigraph is MIL representable. }

We state our assumption on problem \eqref{eq: MIP} and provide a more detailed review of background materials in Section \ref{Section: Literature}. We introduce the proposed ALM framework in Section \ref{Section: ALM} and the ADMM variant in Section \ref{Section: ADMM}, together with their convergence results. In Section \ref{Section: Experiments}, we discuss implementation issues and present numerical experiments. Finally, we give some concluding remarks in Section \ref{Section: Conclusion}.

\section{Preliminaries}\label{Section: Literature}
\subsection{Assumptions and Approximate Solution}\label{sec: assumption}
 Throughout this paper, we make the following assumption on MILP \eqref{eq: MIP}. 
\begin{assumption}~\label{assumption: mip} 
Problem \eqref{eq: MIP} is feasible, and constraint sets $X$ and $Z$ in \eqref{eq: XZ MIP} are compact and mixed-integer-linear representable,  i.e., 
\begin{align}\label{eq: XZ MIP}
		X = \{x\in \R_+^{n_1} \times \Z_+^{n-n_1}:~ Ex= f\},\quad Z = \{z\in \R_+^{d_1} \times \Z_+^{d-d_1}:~ Gz= h\},
\end{align}
for rational matrices $(E, G)$ and vectors $(f,h)$ of proper dimensions.
\end{assumption}
We measure the accuracy of an approximate solution as follows.
\begin{definition}\label{def: approximate}
Let $\epsilon\geq 0$. We say $(x^*,z^*)$ is an $\epsilon$-solution of the MIP \eqref{eq: MIP} if  $x^*\in X$, $z^*\in Z$, $c^\top x^* + g^\top z^* \leq p^*+ \epsilon$, and $\|Ax^*+Bz^*\|_1\leq \epsilon.$
\end{definition}
Note that infeasibility is measured in the $\ell_1$-norm due to our analysis. Since $(x^*,z^*)$ may be infeasible, it is possible that $c^\top x^*+g^\top z^* < p^*$.  

\subsection{Exact Penalization}\label{Subsection: exact penalty}
Since the AL dual problem \eqref{eq: dual problem} has weak duality $\sup_{\lambda\in \R^m, \rho \geq 0} d(\lambda, \rho) \leq p^*$, two follow-up questions are: 1) how to obtain strong duality, i.e., for ``$=$'' to hold, and 2) how to obtain an optimal primal solution by solving the dual problem? Feizollahi et al. \cite{feizollahi2017exact} provides positive answers to both questions for MILP. Recall the definition of exact penalization. 
\begin{definition}[Exact penalization {\cite[Definition 11.60]{rockafellar2009variational}}] \label{def: exact penalization}
	A dual variable $\lambda\in\R^m$ is said to support exact penalization if, for all sufficiently large $\rho > 0$,
	$$\Argmin_{x\in X,z\in Z} \{c^\top x+ g^\top z:~Ax+Bz=0\}=\Argmin_{x\in X,z\in Z} \mathcal{L}(x,z,\lambda, \rho).$$
\end{definition}
We say a pair $(\lambda, \rho)$ supports exact penalization if the above equation holds. We simply say $\rho$ supports exact penalization when $(0,\rho)$ does so.
\begin{theorem}[Exact penalization for MILP \cite{feizollahi2017exact}] \label{thm: mip exact penaliztion}
	Under Assumption \ref{assumption: mip}, strong duality holds for \eqref{eq: dual problem}: $\sup_{\lambda\in \R^m, \rho \geq 0} d(\lambda, \rho) = p^*.$ For any $\lambda \in \R^m$, there exists $\underline{\rho}>0$ such that for every $\rho \in [\underline{\rho}, +\infty)$, $(\lambda, \rho)$ supports exact penalization.
\end{theorem}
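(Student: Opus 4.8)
The plan is to recast everything in terms of the perturbation (value) function and reduce the statement to a one-sided ``calmness'' estimate on that function. Define $\mathcal{U}:=\{Ax+Bz:~x\in X,~z\in Z\}$ and, for $u\in\mathcal{U}$, the value function $\phi(u):=\min\{c^\top x+g^\top z:~Ax+Bz=u,~x\in X,~z\in Z\}$ (the minimum is attained because the fiber $\{(x,z)\in X\times Z:~Ax+Bz=u\}$ is compact and nonempty). Feasibility in Assumption~\ref{assumption: mip} gives $0\in\mathcal{U}$ and $p^*=\phi(0)$. For any $(\lambda,\rho)$ one has, by splitting the minimization over $(x,z)$ into an outer minimization over $u=Ax+Bz$ and an inner one over the fiber, $d(\lambda,\rho)=\min_{u\in\mathcal{U}}\big\{\langle\lambda,u\rangle+\phi(u)+\rho\|u\|_1\big\}$; taking $u=0$ yields weak duality $d(\lambda,\rho)\le\phi(0)=p^*$. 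Hence it suffices to show that, for a fixed $\lambda$ and all large $\rho$, the minimum above equals $\phi(0)$ and is attained \emph{only} at $u=0$.

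The key lemma I would prove is that $\phi$ is calm from below at the origin with a modulus independent of $\lambda$ and $\rho$: there is $M\ge 0$, depending only on the problem data, such that $\phi(u)\ge\phi(0)-M\|u\|_1$ for every $u\in\mathcal{U}$. To get this, first use Assumption~\ref{assumption: mip}: since $X$ and $Z$ are compact and mixed-integer representable, fixing the finitely many feasible integer assignments expresses $X\times Z=\bigcup_{j=1}^{J}P_j$ as a finite union of (rational) polytopes, so $\mathcal{U}=\bigcup_j [A\,|\,B]P_j=:\bigcup_j Q_j$ is again a finite union of polytopes, hence compact, and $\phi(u)=\min_{\{j:\,u\in Q_j\}}\phi_j(u)$ with $\phi_j(u):=\min\{c^\top x+g^\top z:~[A\,|\,B]w=u,~w\in P_j\}$. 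On each $Q_j$, Hoffman's lemma applied to the polyhedron $\{w\in P_j:~[A\,|\,B]w=u\}$ shows $\phi_j$ is Lipschitz (in $\|\cdot\|_1$) with some constant $M_j$, since a perturbation of $u$ within $Q_j$ can be matched by a proportional perturbation of a minimizer $w\in P_j$. Now pick $\delta>0$ with $\mathrm{dist}_1(0,Q_j)>\delta$ for every $j$ with $0\notin Q_j$. For $u\in\mathcal{U}$ with $\|u\|_1\le\delta$, only pieces containing $0$ can be active, giving $\phi(u)\ge\phi(0)-(\max_j M_j)\|u\|_1$; for $\|u\|_1>\delta$, the crude bound $\phi(u)\ge\underline v:=\min_{(x,z)\in X\times Z}(c^\top x+g^\top z)>-\infty$ gives $\phi(u)\ge\phi(0)-\tfrac{\phi(0)-\underline v}{\delta}\|u\|_1$. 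Taking $M$ to be the larger of the two constants proves the lemma.

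With the lemma in hand the theorem follows quickly. Fix $\lambda$ and set $\underline\rho:=M+\|\lambda\|_\infty+1>0$. For any $\rho\ge\underline\rho$ and any $u\in\mathcal{U}$, $\langle\lambda,u\rangle+\phi(u)+\rho\|u\|_1\ge\phi(0)+\big(\rho-\|\lambda\|_\infty-M\big)\|u\|_1\ge\phi(0)$, with equality iff $u=0$. Therefore $d(\lambda,\rho)=\phi(0)=p^*$, which together with weak duality gives $\sup_{\lambda\in\R^m,\rho\ge0}d(\lambda,\rho)=p^*$. For the $\Argmin$ identity in Definition~\ref{def: exact penalization}: for any $(x,z)\in X\times Z$ write $u=Ax+Bz$, so $L(x,z,\lambda,\rho)=(c^\top x+g^\top z)+\langle\lambda,u\rangle+\rho\|u\|_1\ge\phi(u)+\langle\lambda,u\rangle+\rho\|u\|_1\ge p^*$, with equality exactly when $u=0$ and $c^\top x+g^\top z=\phi(0)=p^*$, i.e., exactly when $(x,z)$ is feasible for \eqref{eq: MIP} and optimal. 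Conversely any such optimal feasible $(x,z)$ has $L(x,z,\lambda,\rho)=p^*=\min L$. Hence $(\lambda,\rho)$ supports exact penalization for all $\rho\ge\underline\rho$.

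I expect the one real obstacle to be the calmness lemma, and within it, producing a \emph{single} modulus $M$ valid for the whole value function $\phi$ --- which is in general discontinuous and defined on a disconnected union of polytopes --- while keeping $M$ independent of $\lambda$ and $\rho$. The polytope decomposition disposes of the nonconvexity/discontinuity, Hoffman's lemma supplies the piecewise Lipschitz bounds, and the split into $\|u\|_1\le\delta$ and $\|u\|_1>\delta$ glues the pieces together; rationality of the data in Assumption~\ref{assumption: mip} is what guarantees $\mathcal{U}$ is genuinely a finite union of polytopes so that all these constants are finite. Everything after the lemma --- weak duality and the $\Argmin$ identity --- is routine bookkeeping.
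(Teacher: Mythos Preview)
The paper does not supply its own proof of this theorem; it is quoted from Feizollahi et al.\ \cite{feizollahi2017exact} as background, and the paper immediately moves on to the closely related characterization in Theorem~\ref{thm: exact} (also cited, from \cite{rockafellar2009variational}). So there is no in-paper argument to compare against.

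Your proposal is correct and essentially reconstructs the standard route to this result. Two brief remarks. First, your calmness lemma is exactly condition~3 of the paper's Theorem~\ref{thm: exact} (with $p(u)=\phi(-u)$), so you are in effect verifying that criterion directly and then reading off the $\Argmin$ identity; this is the same logical skeleton as in the original reference. Second, the one place worth tightening is the ``only pieces containing $0$ can be active'' step: you should state explicitly that for each $j$ with $0\in Q_j$ one has $\phi_j(0)\ge\phi(0)$, so that $\phi_j(u)\ge\phi_j(0)-M_j\|u\|_1\ge\phi(0)-M_j\|u\|_1$, which is what you need before taking the minimum over $j$. With that line added, the argument is complete; the finite polytope decomposition (via boundedness of the integer parts), Hoffman-type Lipschitz stability of the parametric LP $\phi_j$, and the $\delta$-splitting handle all the potential failure modes.
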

The results proved in \cite{feizollahi2017exact} applies to a broader class of penalty functions, including all norms in $\R^m$. This paper focuses on the $\ell_1$-norm for component-wise separability. The proximal augmented Lagrangian where $\sigma(\cdot)= \frac{1}{2}\|\cdot\|_2^2$ does not support exact penalization; in general, no finite penalty $\rho$ can close the duality gap \cite[Proposition 7]{feizollahi2017exact}. A complete characterization of exact penalization is given as follows.
\begin{theorem}[Criterion for exact penalization {\cite[Theorem 11.61]{rockafellar2009variational}}]\label{thm: exact}
	Suppose Assumption \ref{assumption: mip} holds. The following statements are equivalent:
	\begin{enumerate}
		\item The pair $(\lambda,\rho)$ supports exact penalization.
		\item The pair $({\lambda}, {\rho})$ solves the dual problem \eqref{eq: dual problem}.
		\item There exists $r>0$ such that $p(u)\geq p(0) + \langle \lambda, u\rangle -\rho\|u\|_1$ for all $u\in \overline{B}_{1}(0;r)$,  where $p(u) := \min_{x,z}\{c^\top x + g^\top z~|~  Ax+Bz+u = 0, x\in X, z\in Z\}.$
	\end{enumerate}
\end{theorem}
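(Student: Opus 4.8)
The plan is to pass to the value function $p$ and argue cyclically, $(1)\Rightarrow(2)\Rightarrow(3)\Rightarrow(1)$. The first move is the identity obtained from \eqref{eq: dual function} by substituting $u=-(Ax+Bz)$ and minimizing over $(x,z)$ with $Ax+Bz$ held fixed,
\begin{equation*}
  d(\lambda,\rho)=\inf_{u\in\R^m}\bigl\{\,p(u)-\langle\lambda,u\rangle+\rho\|u\|_1\,\bigr\}=:\inf_{u}q_{\lambda,\rho}(u),
\end{equation*}
in which $q_{\lambda,\rho}(0)=p(0)=p^*$; this re-derives weak duality $d(\lambda,\rho)\le p^*$ and, with $\sup d=p^*$ from Theorem \ref{thm: mip exact penaliztion}, shows that $(\lambda,\rho)$ solves \eqref{eq: dual problem} iff $q_{\lambda,\rho}\ge p^*$ on all of $\R^m$, i.e.\ iff the inequality in statement 3 holds globally. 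Then $(1)\Rightarrow(2)$ is short: any $(x^*,z^*)$ in the common $\Argmin$ of statement 1 is feasible, so $d(\lambda,\rho)=L(x^*,z^*,\lambda,\rho)=c^\top x^*+g^\top z^*=p^*$ and $(\lambda,\rho)$ is dual-optimal; and $(2)\Rightarrow(3)$ is immediate because the global inequality restricts to any ball $\overline{B}_1(0;r)$.

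The work is in $(3)\Rightarrow(1)$, which I would do in two steps. (a) Upgrade the local inequality of statement 3 to the global one $q_{\lambda,\rho}\ge p^*$. I would use the structure of MILP value functions underlying Theorem \ref{thm: mip exact penaliztion} (cf.\ \cite{blair1977value}): since $X,Z$ are compact and MIL representable, $\operatorname{dom} p$ is a finite union of polytopes and $p=\min_{j\in J}p_j$ with $J$ finite and each $p_j$ convex, piecewise-linear on a polytope $D_j$; hence $q_{\lambda,\rho}=\min_{j\in J}q_j$ with each $q_j:=p_j-\langle\lambda,\cdot\rangle+\rho\|\cdot\|_1$ convex on $D_j$, and statement 3 forces $q_j\ge p^*$ on $D_j\cap\overline{B}_1(0;r)$ for every $j$. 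For pieces with $q_j(0)=p^*$ the point $0$ is then a local and hence (by convexity) global minimizer of $q_j$ on $D_j$, so $q_j\ge p^*$ throughout $D_j$; the remaining pieces — those with $q_j(0)>p^*$, and those with $0\notin D_j$ (on which $\|u\|_1$ is bounded below by a positive constant) — I would control using the finite exact-penalty threshold from Theorem \ref{thm: mip exact penaliztion}, enlarging $\rho$ if necessary, each of statements 1--3 surviving an increase of $\rho$. (b) From $q_{\lambda,\rho}\ge p^*=q_{\lambda,\rho}(0)$, any minimizer $(x,z)$ of $L(\cdot,\cdot,\lambda,\rho)$ obeys $p^*=L(x,z,\lambda,\rho)\ge q_{\lambda,\rho}\bigl(-(Ax+Bz)\bigr)\ge p^*$, so $(x,z)$ is optimal for the problem perturbed by $u=-(Ax+Bz)$; with $\rho$ beyond the exact-penalty threshold this forces $u=0$, hence $Ax+Bz=0$ and $c^\top x+g^\top z=p^*$, i.e.\ $(x,z)\in\Argmin\{c^\top x+g^\top z:\ Ax+Bz=0\}$. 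The reverse inclusion is trivial, which gives statement 1. (Here (a) is essentially $(3)\Rightarrow(2)$ via the reformulation, and (b) is $(2)\Rightarrow(1)$.)

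I expect step (a) to be the main obstacle, specifically the pieces whose polytope does not contain the origin or where $p$ already exceeds $p^*$ there: the local calmness of statement 3 need not, on its own, control $q_{\lambda,\rho}$ far from $0$, and it is precisely the finiteness and polyhedrality of the MILP value function — not mere lower semicontinuity — together with the exact-penalty theory of Theorem \ref{thm: mip exact penaliztion} that rescue the argument; this is also where the hypothesis of a mixed-integer \emph{linear} program is essential. A tidy way to organize this would be to isolate, as a separate lemma, the quantitative fact that for an MILP value function local calmness at $0$ relative to the tilt $\lambda$ implies global validity of the corresponding bound once $\rho$ is at least the (finite) threshold, and then feed that lemma into the dual reformulation above.
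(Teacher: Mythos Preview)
The paper does not supply its own proof of this result; Theorem~\ref{thm: exact} is quoted verbatim from Rockafellar--Wets, so there is no in-paper argument to compare against.

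Your implications $(1)\Rightarrow(2)\Rightarrow(3)$ via the identity $d(\lambda,\rho)=\inf_u q_{\lambda,\rho}(u)$ are correct. The genuine gap is in $(3)\Rightarrow(1)$, and your proposal does not close it. The assertion concerns a \emph{fixed} pair $(\lambda,\rho)$; ``enlarging $\rho$ if necessary'' establishes the conclusion for some $(\lambda,\rho')$ with $\rho'>\rho$, not for the given pair, and the monotonicity you invoke (``statements 1--3 survive an increase of $\rho$'') points the wrong way to repair this. Step~(b) has the same defect: appealing to ``$\rho$ beyond the exact-penalty threshold'' to force $u=0$ assumes precisely what you are trying to prove about $\rho$.

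The gap is not cosmetic. Take the one-dimensional MILP with $X=\{0,1\}$, $Z=\{0\}$, $A=2$, $B=0$, $c=-10$, $g=0$, which satisfies Assumption~\ref{assumption: mip}. Here $p(0)=0$, $p(-2)=-10$, and $p(u)=+\infty$ elsewhere. For $(\lambda,\rho)=(0,1)$ statement~3 holds for any $r<2$ (the inequality is vacuous on the punctured ball), yet $d(0,1)=\min\{0,-8\}=-8<p^*$, so neither statement~1 nor statement~2 holds. The obstruction is exactly the piece of $\operatorname{dom}p$ bounded away from $0$ that your step~(a) cannot control without increasing $\rho$. This indicates either that the equivalence, read literally for a fixed pair, requires an additional hypothesis, or---more in line with Definition~\ref{def: exact penalization} and the source theorem---that the intended content is about $\lambda$ alone, with $\rho$ ranging over all sufficiently large values. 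Under that reading your piecewise-convex decomposition, together with the lower bound $\|u\|_1\ge\delta_j>0$ on the finitely many pieces $D_j$ not containing the origin, does complete the argument once $\rho$ exceeds a finite threshold depending on those pieces.
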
 

{\subsection{Nonconvex Cuts}
In this subsection, we review the main tools used in our algorithmic development, namely, reverse norm cuts and AL cuts. }

{\subsubsection{Global Lipschitz Minimization and Reverse Norm Cuts} \label{sec: reverse norm cut}
Consider
\begin{align} \label{eq: lip minimization}
    v^* = \min_{z \in Z} f(z) + Q(z),
\end{align}
where $f:\R^d \rightarrow \R$ is a simple function, e.g., a linear one, and $Q:\R^d \rightarrow \R$ is $L$-Lipschitz with respect to the $\ell_1$-norm over  the compact set $Z\subseteq \R^d$. In particular, $Q$ can be nonconvex and we only have access to its zero-order oracle, i.e., given $z\in Z$, we can evaluate $Q(z)$. Since $Q$ is $L$-Lipschitz, given $\bar{z}\in Z$, the following inequality
\begin{align}\label{def: reversed norm}
    Q(z) \geq r(z;\bar{z}) := Q(\bar{z}) -L\|z-\bar{z}\|_1
\end{align}
holds for all $z\in Z$. Due to the term ``$-\|z- \bar{z}\|_1$" in its right-hand side,  inequality \eqref{def: reversed norm} is called a reverse norm cut by \cite{ahmed2019stochastic}. The function $r(z;\bar{z})$ is a lower approximation of $Q$ that is tight at $\bar{z}$, i.e., $Q(\bar{z}) = r(\bar{z};\bar{z})$. Given a set $\bar{Z} \subset Z$, further consider 
\begin{align}\label{eq: lower approx}
	\underline{R}(z; \bar{Z}) := \sup \{r(z;\bar{z}):~\bar{z}\in \bar{Z}\},
\end{align}
which remains a lower approximation of $Q$ and gets closer to $Q$ as the set $\bar{Z}$ gets larger. If $\bar{Z}= Z$, we have $\underline{R}(z; \bar{Z})= Q(z)$ over all $z\in Z$. As a result, we can use $\underline{R}$ as an approximation of $Q$, and iteratively refine $\underline{R}$ by expanding $\bar{Z}$. This idea was firstly proposed by Mayne and Polak \cite{mayne1984outer} in the 80s (to our knowledge) and revisited recently \cite{ahmed2019stochastic, malherbe2017global}. We formalize the procedure in Algorithm \ref{alg: lip minimization}.
\begin{algorithm}[h]
	\caption{: Global Minimization by Reverse Norm Cuts}\label{alg: lip minimization}
	\begin{algorithmic}[1]
		\STATE \textbf{Input}: $(z^0, L, \epsilon) \in Z \times \R_{++} \times \R_+$; 
 		\STATE set $Z_0 \gets \{z^0\}$ and initialize $\texttt{UB} \gets +\infty$;
		\FOR{$k = 1,2,\cdots$}
		\STATE compute $z^k \in \Argmin_{z\in Z} f(z) + \underline{R}(z; Z_{k-1})$, and let $t^k \gets \underline{R}(z^k; Z_{k-1})$; \label{eq: lip minimization-z}
        \STATE $\texttt{UB}\gets \min \{\texttt{UB}, f(z^k) + Q(z^k) \}$;
        \IF{$\texttt{UB} - f(z^k) - t^k \leq \epsilon$}
        \STATE \textbf{return} $z^k$.
        \ENDIF
        \STATE $Z_k \gets Z_{k-1} \cup \{z_k\}$; 
		\ENDFOR
	\end{algorithmic}
\end{algorithm}
\begin{remark}
    We give some remarks regarding Algorithm \ref{alg: lip minimization}. 
    \begin{enumerate}
        \item The initial point $z^0 \in Z$ is required only for notation consistency. A pre-solve step where $\underline{R}$ is replaced by a finite lower bound of $Q$ over $Z$  yields $z^0$. 
        \item The minimal Lipschitz constant $L$ can be replaced by any over-estimators. 
        \item Quantities $\texttt{UB}$ and $f(z^k) + t^k$ keep track of upper and lower bounds of $v^*$, respectively. Another termination criteria is to check whether $Q(x^k) - t^k \leq \epsilon$. 
        \item The problem in line \ref{eq: lip minimization-z} can be cast as 
        \begin{align}
            \min_{z\in Z, t} \{f(z) + t:~ t \geq Q(z^j) - L\|z-z^j\|_1,~j = 0, \cdots, k-1\}. 
        \end{align}
        When $f$ and $Z$ are MIL representable, the above problem can be formulated as a MILP by introducing additional $k \times d$ binary variables and  $k \times 2d$ non-negative variables; e.g., see \cite{ahmed2019stochastic}. 
    \end{enumerate}
\end{remark}
The convergence of Algorithm \ref{alg: lip minimization} is stated in the next theorem.
\begin{theorem}[\cite{ahmed2019stochastic, mayne1984outer}]
    If $\epsilon > 0$, then Algorithm \ref{alg: lip minimization} terminates in a finite number of iterations with an $\epsilon$-optimal solution to problem \eqref{eq: lip minimization}, i.e., a point $z^k \in Z$ such that $f(z^k) + Q(z^k) \leq v^* + \epsilon$. If $\epsilon = 0$, then either Algorithm \ref{alg: lip minimization} terminates in a finite number of iterations with a global optimal solution to problem \eqref{eq: lip minimization}, or the sequence $\{f(z^k) + t^k\}_{k\in \N}$ converges to $v^*$ monotonically from below, and any limit point $z^*$ of $\{z^k\}_{k\in \N}$ is a global optimal solution to problem \eqref{eq: lip minimization}.
\end{theorem}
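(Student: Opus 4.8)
The plan is to prove the two cases—$\epsilon > 0$ (finite termination) and $\epsilon = 0$ (asymptotic convergence)—separately, in both cases exploiting the two key monotonicity facts: the lower bounds $f(z^k) + t^k$ are nondecreasing in $k$ (because $Z_{k-1} \subseteq Z_k$ implies $\underline{R}(\cdot; Z_{k-1}) \leq \underline{R}(\cdot; Z_k)$, so the minimization in line \ref{eq: lip minimization-z} is over a larger minorant), and $\texttt{UB}$ is nonincreasing by construction. Throughout, the crucial inequality is that $f(z^k) + t^k \leq v^*$, which holds because $\underline{R}(\cdot; Z_{k-1}) \leq Q(\cdot)$ pointwise on $Z$, so minimizing $f + \underline{R}(\cdot; Z_{k-1})$ gives a value no larger than minimizing $f + Q$; symmetrically $\texttt{UB} \geq v^*$ since it is the best value of $f(z^j) + Q(z^j)$ seen so far and each $z^j \in Z$.

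\textbf{Case $\epsilon > 0$.} First I would suppose, for contradiction, that the algorithm never terminates, so $\texttt{UB} - f(z^k) - t^k > \epsilon$ for all $k$. Since $Z$ is compact, $\{z^k\}$ has a convergent subsequence $z^{k_j} \to z^*$. The key computation is to bound $t^{k_{j+1}}$ below using the reverse norm cut at the previously added point $z^{k_j} \in Z_{k_{j+1}-1}$: we get $t^{k_{j+1}} = \underline{R}(z^{k_{j+1}}; Z_{k_{j+1}-1}) \geq Q(z^{k_j}) - K\|z^{k_{j+1}} - z^{k_j}\|_1$. Adding $f(z^{k_{j+1}})$ to both sides and using continuity of $f$ and $Q$ (the latter is $K$-Lipschitz, hence continuous) plus $\|z^{k_{j+1}} - z^{k_j}\|_1 \to 0$, the right-hand side tends to $f(z^*) + Q(z^*) \geq \texttt{UB}^\infty$, where $\texttt{UB}^\infty := \lim_k \texttt{UB}$; meanwhile the left-hand side is $\leq \texttt{UB}^\infty$. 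So $f(z^{k_{j+1}}) + t^{k_{j+1}} \to \texttt{UB}^\infty$, forcing $\texttt{UB} - f(z^{k_{j+1}}) - t^{k_{j+1}} \to 0$, contradicting that this gap stays $> \epsilon$. Hence the algorithm terminates at some iteration $k$, and at termination $f(z^k) + Q(z^k) = \texttt{UB} \leq f(z^k) + t^k + \epsilon \leq v^* + \epsilon$, i.e., $z^k$ is an $\epsilon$-solution.

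\textbf{Case $\epsilon = 0$.} If the algorithm terminates at iteration $k$, then $\texttt{UB} = f(z^k) + t^k \leq v^*$, and combined with $\texttt{UB} \geq v^*$ and $f(z^k)+Q(z^k) \le \texttt{UB}$ this gives $f(z^k) + Q(z^k) = v^*$, a global optimum. Otherwise it runs forever; $\{f(z^k) + t^k\}$ is nondecreasing and bounded above by $v^*$, hence converges to some limit $\ell \leq v^*$. To show $\ell = v^*$, take a subsequence $z^{k_j} \to z^*$ and run the same reverse-norm-cut argument as above: $f(z^{k_{j+1}}) + t^{k_{j+1}} \geq f(z^{k_{j+1}}) + Q(z^{k_j}) - K\|z^{k_{j+1}} - z^{k_j}\|_1 \to f(z^*) + Q(z^*) \geq v^*$, so $\ell \geq v^*$, giving $\ell = v^*$. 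Finally, for any limit point $z^*$ of $\{z^k\}$, lower semicontinuity (indeed continuity) of $f + Q$ and optimality of $z^{k_j}$ for the minorant problem give $f(z^*) + Q(z^*) = v^*$.

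\textbf{The main obstacle} is making the subsequential limiting argument airtight: one must be careful that the reverse norm cut invoked at step $k_{j+1}$ uses the point $z^{k_j}$, which was indeed inserted into $Z_{k_j} \subseteq Z_{k_{j+1}-1}$ at a strictly earlier iteration, so the pair of indices in the subsequence must be consecutive in $j$ but the underlying iteration indices $k_j$ need not be consecutive—this is fine, since $z^{k_j} \in Z_{k_{j+1}-1}$ still holds as long as $k_j < k_{j+1}$. A secondary point to handle cleanly is that both the "no termination" contradiction (for $\epsilon>0$) and the "$\ell \ge v^*$" step (for $\epsilon=0$) rely on exactly the same estimate, so I would isolate it as a short lemma: along any convergent subsequence $z^{k_j}\to z^*$, $\liminf_j \big(f(z^{k_j}) + t^{k_j}\big) \geq f(z^*) + Q(z^*)$. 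Everything else—monotonicity of the bounds, the sandwich $f(z^k)+t^k \le v^* \le \texttt{UB}$, and the terminal algebra—is routine.
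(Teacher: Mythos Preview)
Your argument is correct and follows the standard route: exploit monotonicity of the bounds, extract a convergent subsequence by compactness of $Z$, and use the reverse norm cut centered at $z^{k_j}$ (which lies in $Z_{k_{j+1}-1}$) to bound $t^{k_{j+1}}$ from below. The paper itself does not prove this theorem (it is cited from \cite{ahmed2019stochastic, mayne1984outer}); however, the paper's proof of the subsequent complexity result (Theorem~\ref{thm: compexlity of lip minimization}, via Lemma~\ref{lemma: local_opt}) uses precisely the same subsequence-plus-reverse-norm-cut mechanism, so your approach is fully aligned with the paper's machinery.

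One small slip worth tightening: at termination you write $f(z^k)+Q(z^k)=\texttt{UB}$, but in general $\texttt{UB}\leq f(z^k)+Q(z^k)$ and equality need not hold, since the incumbent achieving $\texttt{UB}$ may be some earlier $z^{j^*}$. The correct conclusion is that the \emph{incumbent} $z^{j^*}$ satisfies $f(z^{j^*})+Q(z^{j^*})=\texttt{UB}\leq f(z^k)+t^k+\epsilon\leq v^*+\epsilon$, which is what the theorem means by ``an $\epsilon$-optimal solution.'' This is a cosmetic fix and does not affect the structure of your proof.
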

In addition, inspired by the analysis in \cite{zhang2019stochastic}, we further derive an iteration complexity estimate for Algorithm \ref{alg: lip minimization}, which complements the results in \cite{ahmed2019stochastic, mayne1984outer}. 
\begin{theorem}\label{thm: compexlity of lip minimization}
    Let $\epsilon >0$, and suppose $Z \subseteq \overline{B}_1(\bar{z}; R)$ for some $\bar{z}\in \R^d$ and radius $R>0$. Then Algorithm \ref{alg: lip minimization} terminates in no more than $(1 + 4 L R \epsilon^{-1})^d$ iterations. 
\end{theorem}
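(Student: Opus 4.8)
The plan is to bound the number of iterations by a volumetric/packing argument on the cube $\overline{B}_1(\bar z;R)\supseteq Z$, exploiting the fact that each new cut center $z^k$ must be ``far'' in the $\ell_1$-metric from all previously generated centers before termination. First I would show the separation property: if the algorithm does not terminate at iteration $k$, then for every previously generated center $z^j$ ($j=0,\dots,k-1$) we have $\|z^k-z^j\|_1 > \epsilon/(2K)$. Indeed, since the termination test $\texttt{UB}-f(z^k)-t^k\le\epsilon$ failed and $\texttt{UB}\le f(z^j)+Q(z^j)$ while $t^k=\underline R(z^k;Z_{k-1})\ge r(z^k;z^j)=Q(z^j)-K\|z^k-z^j\|_1$, we get
\begin{align*}
\epsilon < \texttt{UB}-f(z^k)-t^k \le f(z^j)+Q(z^j)-f(z^k)-\bigl(Q(z^j)-K\|z^k-z^j\|_1\bigr) .
\end{align*}
Hmm — this leaves a stray $f(z^j)-f(z^k)$ term, so the clean statement above is not quite what falls out; the correct route is to note $f(z^k)+t^k\le f(z^j)+r(z^j;z^j)=f(z^j)+Q(z^j)$ is false in general too. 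The right observation is that $t^k\ge Q(z^j)-K\|z^k-z^j\|_1$ for each $j<k$, and $\texttt{UB}\le f(z^k)+Q(z^k)$, so $\epsilon< \texttt{UB}-f(z^k)-t^k\le Q(z^k)-t^k\le Q(z^k)-Q(z^j)+K\|z^k-z^j\|_1\le 2K\|z^k-z^j\|_1$ using $K$-Lipschitzness of $Q$. Hence $\|z^k-z^j\|_1>\epsilon/(2K)$ for all $j<k$, i.e. the centers $\{z^0,z^1,\dots\}$ form an $\epsilon/(2K)$-separated set in the $\ell_1$-ball.

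Next I would convert this into a counting bound. The open $\ell_1$-balls $\overline B_1(z^j;\epsilon/(4K))$ are pairwise disjoint and, since each $z^j\in Z\subseteq\overline B_1(\bar z;R)$, they are all contained in $\overline B_1(\bar z;R+\epsilon/(4K))$. Comparing Lebesgue volumes, the number $N$ of generated centers satisfies
\begin{align*}
N\cdot \mathrm{vol}\bigl(\overline B_1(0;\tfrac{\epsilon}{4K})\bigr)\le \mathrm{vol}\bigl(\overline B_1(0;R+\tfrac{\epsilon}{4K})\bigr),
\end{align*}
and since $\mathrm{vol}(\overline B_1(0;\rho))$ scales like $\rho^d$ in $\R^d$, this gives $N\le \bigl(1+4KR/\epsilon\bigr)^d$, which is exactly the claimed bound; the number of iterations is $N$ (one new center per non-terminating iteration).

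The main obstacle — really the only delicate point — is getting the separation constant and the inflation radius to match the target bound $(1+4KR\epsilon^{-1})^d$ exactly rather than up to a dimension-dependent constant. A crude packing argument would give $N\le (1+2KR/\epsilon)^d$ with disjoint balls of radius $\epsilon/(4K)$ inside a ball of radius $R+\epsilon/(4K)$, i.e. ratio $(4KR/\epsilon+1)$, so in fact the volume argument already yields the stated constant cleanly; the care needed is just in (i) confirming the Lipschitz step $Q(z^k)-Q(z^j)\le K\|z^k-z^j\|_1$ is the right inequality (it is, since we only need an upper bound on $Q(z^k)-t^k$ and $t^k\ge Q(z^j)-K\|z^k-z^j\|_1$ for the specific $j$ achieving, or any $j$), and (ii) handling $z^0$: if a presolve lower bound is used in the first step instead of $\underline R(\cdot;Z_0)$, one simply notes $z^0$ is still a valid center and still lies in $Z$, so the separation argument applies from iteration $1$ onward and at most adds the already-counted $z^0$. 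I would also remark that the bound is finite for any $\epsilon>0$, recovering the finite-termination part of the previous theorem.
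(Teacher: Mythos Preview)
Your argument is correct and follows the same route as the paper's proof: establish that iterates generated before termination are pairwise $\ell_1$-separated by more than $\epsilon/(2K)$ (the paper packages this as an auxiliary lemma; you obtain it directly from $\texttt{UB}\le f(z^k)+Q(z^k)$ and $t^k\ge Q(z^j)-K\|z^k-z^j\|_1$, which is the right fix after your false start), and then bound their number by comparing volumes of disjoint $\ell_1$-balls of radius $\epsilon/(4K)$ inside $\overline{B}_1(\bar z;R+\epsilon/(4K))$. The only cosmetic difference is that the paper first invokes a subsequence argument to guarantee the termination index is finite before bounding it, whereas your packing bound makes that step unnecessary.
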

\begin{proof}
    See Appendix \ref{sec: proof of lip minimization complexity}. 
\end{proof}}
Theorem \ref{thm: compexlity of lip minimization} gives an upper bound on the number of reverse norm cuts to form a continuous and uniform approximation of $Q$ over an $\ell_1$-ball covering the feasible region $Z$. When $Z$ is much ``smaller" than the $\ell_1$-ball, i.e., when $Z$ contains integer or mixed-integer variables, the bound in Theorem 2.8 could potentially overestimate. For example, when $Z\subset\Z^d$, a simple grid search is adequate to solve the problem. The proposed ALM and ADMM inherit the idea behind Algorithm \ref{alg: lip minimization}, and, as we will see in Section \ref{sec: investment}, might indeed emulate such an enumeration behavior. On the other hand, we also observe instances where a full enumeration is not necessary within the proposed ALM and ADMM framework, e.g., see Table \ref{table: investment_ub10_T}.

{\subsection{Augmented Lagrangian Cuts}\label{sec: al cuts}
Augmented Lagrangian cuts were introduced in \cite{ahmed2019stochastic} and generalized in \cite{zhang2019stochastic}. In this subsection, we review and modify AL cuts in the context of problem \eqref{eq: MIP}. Consider the penalty formulation of problem \eqref{eq: MIP}:
\begin{align}\label{eq: penalty}
    \min_{z\in Z} g^\top z + R_{\rho}(z),
\end{align}
where $R_{\rho}:\R^d \rightarrow \R$ is the optimal value of a partial minimization with respect to $x$: 
\begin{align}\label{eq: R_rho}
    R_{\rho}(z) : = \min_{x\in X} c^\top x + \rho\|Ax+Bz\|_1.
\end{align}
Algorithm \ref{alg: lip minimization} can be applied to solve problem \eqref{eq: penalty} by generating reverse norm cuts of $R_{\rho}$; we elaborate this idea in Section \ref{Subsection: penalty}. We consider another class of nonconvex cuts, augmented Lagrangian (AL) cuts \cite{ahmed2019stochastic} that can be used to approximate $R_{\rho}$.
}

Let $\mu \in \R^m$ and $\beta\geq 0$, and define 
\begin{align}\label{eq: further_relax}
	P(z, \mu, \beta) := \min_x c^\top x + \langle \mu, Ax+Bz\rangle + \beta\|Ax+Bz\|_1.
\end{align}
Notice that 
\begin{align}\label{eq: weak duality}
	P(z, \mu, \beta) \leq & \min_x c^\top x + (\beta+ \|\mu\|_{\infty})\|Ax+Bz\|_1 \leq R_\rho(z)
\end{align}
for all $(\mu, \beta) \in \Lambda(\rho) := \{(\mu, \beta)\in \R^{m+1}~|~ \beta \geq 0, \beta + \|\mu\|_{\infty} \leq \rho\}.$
Inequality \eqref{eq: weak duality} is a \textit{weak duality} result in the sense that $P(z,\mu, \beta)$ provides a lower bound for $R_{\rho}(z)$ when the pair $(\mu,\beta)$ is constrained in $\Lambda(\rho)$. It turns out that \textit{strong duality} also holds: 
\begin{align}\label{eq: strong duality}
	R_{\rho}(z) =&  \max_{(\mu, \beta)\in \Lambda(\rho)} P(z, \mu, \beta) \\
	= &  \max_{(\mu, \beta)\in \Lambda(\rho)} \min_{x\in X} c^\top x + \langle \mu, Ax+Bz\rangle + \beta \|Ax+Bz\|_1, \notag 
\end{align}
simply due to the fact that $(0,\rho)\in \Lambda(\rho)$ and $P(z, 0, \rho) = R_{\rho}(z)$ by definition. Given $\bar{z}$ and $(\bar{\mu},\bar{\beta}) \in \Lambda(\rho)$, suppose we solve the problem \eqref{eq: further_relax}, then for any $z\in Z$,
\begin{align*}
	R_\rho(z) \geq   & P(z, \bar{\mu}, \bar{\beta}) =   \min_{x\in X} c^\top x + \langle \bar{\mu}, Ax+Bz\rangle + \bar{\beta}\|Ax+Bz\|_1\\
		 \geq   & \min_{x\in X} c^\top x + \langle \bar{\mu}, Ax+B\bar{z}\rangle + \bar{\beta}\|Ax+B\bar{z}\|_1 + \langle \bar{\mu}, Bz-B\bar{z}\rangle - \bar{\beta}\|Bz-B\bar{z}\|_1 \\
		  = & P(\bar{z}, \bar{\mu}, \bar{\beta}) + \langle \bar{\mu}, Bz-B\bar{z}\rangle - \bar{\beta}\|Bz-B\bar{z}\|_1. 
\end{align*}
Therefore, the function defined by 
\begin{align}\label{eq: generalized cut}
	\tilde{r}(z; \bar{z}, \bar{\mu}, \bar{\beta}):=P(\bar{z}, \bar{\mu}, \bar{\beta}) + \langle \bar{\mu}, Bz-B\bar{z}\rangle - \bar{\beta}\|Bz-B\bar{z}\|_1
\end{align}
is a lower approximation of $R_{\rho}(z)$.
\begin{definition}
	We call the inequality $R_\rho(z) \geq 	\tilde{r}(z; \bar{z}, \bar{\mu}, \bar{\beta})$ an augmented Lagrangian (AL) cut at $\bar{z}$ parameterized by $(\bar{\mu}, \bar{\beta})$. We say the cut is tight at $z$ if $R_\rho(z) = \tilde{r}(z; \bar{z}, \bar{\mu}, \bar{\beta})$.
\end{definition}
We note that an AL cut is not necessarily tight since $P(\bar{z}, \bar{\mu}, \bar{\beta})< R_{\rho}(\bar{z})$ when $(\bar{\mu},\bar{\beta}) \in \Lambda(\rho)$ is not optimal for \eqref{eq: strong duality}. The additional linear term $\langle \bar{\mu}, Bz-B\bar{z}\rangle$ corresponds to a rotation around the pivot $(\bar{z}, P(\bar{z}, \bar{\mu}, \bar{\beta})) \in \R^{d+1}$. In addition, since $\|\bar{\mu}\|_{\infty} + \bar{\beta} \leq \rho$, the AL cut may have a smaller Lipschitz constant than $R_{\rho}$. Geometrically, the rotation effect and smaller Lipschitz constant allow an AL cut to be ``fatter" than $R_{\rho}$ and thus covers a wider range in $Z$ than a reverse norm cut. Moreover, a smaller value of $\bar{\beta} + \|\bar{\mu}\|_{\infty}$ can be preferable for optimization solvers.
		
Both algorithms in \cite{ahmed2019stochastic,zhang2019stochastic} assume the AL cut generated in each iteration is tight in the sense that a pair $(\mu, \beta)$ optimal to the maximization problem in \eqref{eq: strong duality} is available. In practice, one needs to call a subgradient method to solve the max-min problem \eqref{eq: strong duality} in a double-looped fashion, and hence in each iteration, multiple MILPs in the form of \eqref{eq: further_relax} needs to be solved until convergence. In contrast, the proposed ADMM in Section \ref{Section: ADMM} generates an AL cut in iteration $k$ by solving a \textit{single} problem \eqref{eq: further_relax} with $(\mu, \beta) = (\mu^k, \beta^k)$, and guide convergence through proper updates of $(\mu^{k+1}, \beta^{k+1})$. We note that the strengthened augmented Benders cut in \cite{ahmed2019stochastic} is also generated by solving a single MILP in $x$ and is implemented to accelerate convergence of multistage SDDP. In the context of SIP, we provide theoretical justification to this computationally favorable scheme in the two-stage case. Compared to the ADMM literature, our method finds global solutions of nonconvex problems with convergence guarantees.

\section{An ALM Method empowered by AUSAL}\label{Section: ALM}
In this section, we introduce an ALM framework for MILP \eqref{eq: MIP}. In Section \ref{Subsection: primal}, we present the AUSAL algorithm for subproblem \eqref{eq: dual function}. In order to find an $\epsilon$-solution of MILP \eqref{eq: MIP}, AUSAL is further applied to the penalty formulation in Section \ref{Subsection: penalty} or embedded in the ALM in Section \ref{Subsection: dual}. We present two variants of ALM based on different subgradient updates. 

\subsection{AUSAL}\label{Subsection: primal}
Consider the augmented Lagrangian relaxation \eqref{eq: dual function}:
\begin{equation*}
 d(\lambda, \rho) = \min_{x\in X, z\in Z} ~c^\top x+ g^\top z +\langle \lambda, Ax+Bz\rangle + \rho\|Ax+Bz\|_1,
 \end{equation*}
where $\lambda \in \R^m$ and $\rho$ are fixed as constants. We decompose the minimization into two stages: the inner stage minimizes over $x\in X$ with $z$ fixed, and the outer stage minimizes over $Z$: 
\begin{align}
    \label{eq: value function}
 	& R(z) =R(z;\lambda, \rho) := \min_{x\in X}~ \langle c+ A^\top \lambda,x \rangle + \rho\|Ax+Bz\|_1,\\
	\label{eq: primal subproblem z}
 	&d(\lambda,\rho)= \min_{z\in Z} ~\langle g+B^\top \lambda, z\rangle + R(z).
\end{align}
Note that $R(z)$ is well defined over $z\in \R^d$ due to the compactness of $Z$. The function $R(z)$ is known as the value function or cost-to-go function in the context of sequential decision making. We omit the dependency on $(\lambda,\rho)$ for simplicity in this section. The next lemma suggests that $R$ is Lipschitz. 

\begin{lemma}\label{lemma: value function}
	Suppose Assumption \ref{assumption: mip} holds, and $X$ is compact for any right-hand side vector $f$ (possibly empty).
	Then $R(z)$ is piecewise-linear and $L_\rho$-Lipschitz continuous with respect to the $\ell_1$-norm over $\R^d$, where $L_\rho := \rho\|B\|_1$.
\end{lemma}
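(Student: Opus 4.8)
The plan is to show two things: that $R(z)$ is piecewise-linear, and that it is $K_\rho$-Lipschitz with $K_\rho = \rho\|B\|_1$. For piecewise-linearity, I would argue that $R(z)$ is the value function of a parametric MILP in $x$ with $z$ appearing linearly in the objective through the term $\rho\|Ax+Bz\|_1$. Linearizing the $\ell_1$-norm in the standard way (introduce $t \in \R^m$ with $t \ge Ax+Bz$, $t \ge -(Ax+Bz)$, minimize $\langle c+A^\top\lambda, x\rangle + \rho \mathbf{1}^\top t$), $R(z)$ becomes the optimal value of a MILP whose right-hand side depends affinely on $z$. By the theory of value functions of MILPs (e.g.\ Blair–Jeroslow), such a function is piecewise-linear on $\R^d$; more elementarily, one can fix the integer part of $x$ (finitely many choices because $X$ is compact and the integrality pattern ranges over a finite set), obtain for each choice an LP value function that is convex piecewise-linear in $z$, and then $R$ is the pointwise minimum of finitely many such functions, hence piecewise-linear. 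The hypothesis that $X$ remains compact for any right-hand side $f$ is what guarantees each inner LP/MILP is feasible and bounded (or we interpret $+\infty$ consistently), so $R$ is finite-valued on all of $\R^d$.

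For the Lipschitz bound, the cleanest route is a direct estimate. Fix $z, z' \in \R^d$ and let $x^*$ be optimal for $R(z')$, i.e.\ $R(z') = \langle c+A^\top\lambda, x^*\rangle + \rho\|Ax^*+Bz'\|_1$. Since $x^*\in X$ is feasible for the problem defining $R(z)$ as well, we have
\begin{align*}
R(z) &\le \langle c+A^\top\lambda, x^*\rangle + \rho\|Ax^*+Bz\|_1\\
&\le \langle c+A^\top\lambda, x^*\rangle + \rho\|Ax^*+Bz'\|_1 + \rho\|Bz - Bz'\|_1\\
&= R(z') + \rho\|B(z-z')\|_1 \le R(z') + \rho\|B\|_1\|z-z'\|_1,
\end{align*}
where the second line uses the triangle inequality for $\|\cdot\|_1$ and the last uses the definition of the induced operator norm $\|B\|_1$. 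Swapping the roles of $z$ and $z'$ gives the reverse inequality, so $|R(z)-R(z')|\le \rho\|B\|_1\|z-z'\|_1 = K_\rho\|z-z'\|_1$ for all $z,z'\in\R^d$.

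I do not expect a serious obstacle here; the only point requiring a little care is the finiteness/well-definedness of $R$ on all of $\R^d$, which is where the compactness-for-any-$f$ hypothesis is used (so that the inner minimization has an attained optimum and $x^*$ in the argument above exists), and the piecewise-linearity claim, where one should invoke (or briefly re-derive) the standard structural result on MILP value functions rather than prove it from scratch. The Lipschitz estimate itself is a two-line triangle-inequality argument and is the substantive content of the lemma, feeding directly into the reverse-norm-cut machinery of Section~\ref{sec: reverse norm cut} with Lipschitz constant $K_\rho$.
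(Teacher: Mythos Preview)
Your proposal is correct and follows essentially the same approach as the paper: piecewise-linearity is obtained by recognizing $R(z)$ as the value function of a MILP parameterized by $z$ in the right-hand side and invoking the Blair--Jeroslow result, and the Lipschitz bound is proved by the identical two-line triangle-inequality argument (use the minimizer at one point as a feasible candidate at the other). Your additional elementary argument---fixing the integer part and taking a pointwise minimum of finitely many convex piecewise-linear LP value functions---is a nice alternative justification that the paper does not spell out.
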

\begin{proof}
We show $R(z)$ is piecewise linear by considering the standard-form MILP: 
\begin{align*}
	v(b) := \min_{x}\left\{c^\top x \Big|  Ax := \begin{bmatrix} A_1x \\A_2x \end{bmatrix} 	=\begin{bmatrix} b_1 \\b_2\end{bmatrix} = :b, ~x\in \R^{p}_+ \times \Z^{q}_+ \right\}.
\end{align*}
Let $v(b) = +\infty$ if it is infeasible and $-\infty$ if it is unbounded. Define $D := \{b~|~v(b) < +\infty\}$. It is shown in \cite{blair1977value} that if the MILP is described by rational data and $v(b)>-\infty$ for all $b\in D$, then $v(b)$ is a piecewise linear function over $D$. Now fix $b_2\in \mathrm{Im}(A_2)$ and define $v_1(b_1) = v([b_1^\top, b_2^\top]^\top)$. Then $v_1(b_1)$ is also piecewise linear over its domain $D_1 := \{b_1~|~[b_1^\top, b_2^\top]^\top \in D\}.$
Notice that the premise of \cite{blair1977value} is satisfied for problem \eqref{eq: value function}, and we can equivalently write $R(z) = \min_{x\in X, u}  \{\langle c+ A^\top \lambda, x\rangle +  \rho\|Ax+Bu\|_1~|~u = z\}$, where $z$ plays the role of $b_1$. Since the rest of the problem defining $R(z)$ can be cast as a standard MILP, we conclude that $R(z)$ is a piecewise linear function in $\R^d$.

To prove $R(z)$ is Lipschitz, take any $z^1, z^2 \in \R^d$, and let $x^1, x^2$ be the optimal solution to \eqref{eq: value function} with $z=z^1$ and $z=z^2$, respectively. It holds that $R(z^1) - R(z^2) \leq \langle c+ A^\top \lambda,x^2 \rangle  + \rho \|Ax^2+Bz^1\|_1-\langle c+ A^\top \lambda,x^2\rangle  - \rho \|Ax^2+Bz^2\|_1\leq \rho \|B\|_1\|z^1 - z^2\|_1$, where the first inequality is due to the optimality of $x^1$ in the definition of $R(z^1)$, and the second inequality is by the triangle inequality. Similarly we have $R(z^2) - R(z^1)\leq \rho \|B\|_1\|z^1-z^2\|_1$, which concludes the proof with the claimed modulus $L_\rho$. 
\end{proof}

{
As a result, we can apply Algorithm \ref{alg: lip minimization} to solve problem \eqref{eq: dual function}, and we name the resulting scheme Alternating Update for the Sharp Augmented Lagrangian function, dubbed AUSAL. See Algorithm \ref{alg1}. 
\begin{algorithm}[h]
	\caption{: AUSAL}\label{alg1}
	\begin{algorithmic}[1]
		\STATE \textbf{Input} $(\lambda, \rho, \epsilon) \in \R^m \times \R_{++} \times \R_+$;
		\STATE initialize $f(\cdot) \gets \langle g + B^\top \lambda, \cdot \rangle$, $Q(\cdot)\gets R(\cdot)$, $z_0 \in Z$, and 
		$L_{\rho}\gets \rho \|B\|_1$; 
		\STATE compute $z^*\in Z$ by calling Algorithm \ref{alg: lip minimization} with input $(z_0, L_\rho, \epsilon)$;
		\STATE \textbf{return} $(x^*, z^*)$ where $x^*\in X$ is a minimizer in \eqref{eq: value function} with $z = z^*$. \label{eq: ausal-return}
	\end{algorithmic}
\end{algorithm}
\begin{remark}
    In view of the block-angular structure \eqref{eq: block}, the evaluation of $R(z)$ can be decomposed into $P$ parallel subproblems thanks to the component-wise separability of $\ell_1$-norm. 
\end{remark}
}
{
We immediately have the following corollary of Theorem \ref{thm: compexlity of lip minimization}. 
\begin{corollary}\label{thm: primal convergence}
	Suppose Assumption \ref{assumption: mip} holds, $\epsilon>0$, and $Z \subseteq \overline{B}_1(\bar{z}; R)$ for some $\bar{z}\in \R^d$ and $R>0$. AUSAL terminates in no more than $(1+4\rho \|B\|_1 R\epsilon^{-1})^d=\mathcal{O}(\rho^d \epsilon^{-d})$ iterations.
\end{corollary}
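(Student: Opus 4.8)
The proof is essentially an application of Theorem~\ref{thm: compexlity of lip minimization} to the instantiation of Algorithm~\ref{alg: lip minimization} performed inside AUSAL (Algorithm~\ref{alg1}). The plan is to verify that all hypotheses of Theorem~\ref{thm: compexlity of lip minimization} are met and then substitute the relevant Lipschitz constant.

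First I would check the Lipschitz hypothesis. AUSAL sets $f(\cdot) \gets \langle g + B^\top \lambda, \cdot\rangle$, which is linear hence ``simple'', and $Q(\cdot) \gets R(\cdot)$. By Lemma~\ref{lemma: value function}, under Assumption~\ref{assumption: mip} the value function $R$ is piecewise-linear and $K_\rho$-Lipschitz with respect to the $\ell_1$-norm over $\R^d$, with $K_\rho = \rho\|B\|_1$; note the side condition of Lemma~\ref{lemma: value function} (compactness of $X$ for any right-hand side $f$) is implied by $X$ being a compact MIL-representable set. Hence $f + Q = f + R$ is exactly an instance of problem~\eqref{eq: lip minimization}, and AUSAL feeds Algorithm~\ref{alg: lip minimization} the overestimator $K = K_\rho = \rho\|B\|_1$ of the minimal Lipschitz constant, which is admissible by the second remark following Algorithm~\ref{alg: lip minimization}. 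Second, the hypothesis $\epsilon > 0$ and $Z \subseteq \overline{B}_1(\bar z; R)$ is assumed verbatim in the corollary.

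With these checks in place, Theorem~\ref{thm: compexlity of lip minimization} applies directly and gives termination of the inner Algorithm~\ref{alg: lip minimization}, and therefore of AUSAL, in at most $(1 + 4KR\epsilon^{-1})^d = (1 + 4\rho\|B\|_1 R \epsilon^{-1})^d$ iterations. The final asymptotic simplification $(1 + 4\rho\|B\|_1 R\epsilon^{-1})^d = \mathcal{O}(\rho^d \epsilon^{-d})$ is immediate since $\|B\|_1$, $R$, and $d$ are fixed constants, treating the leading-order term in $\rho$ and $\epsilon$.

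There is no real obstacle here; the only thing to be careful about is bookkeeping, namely that ``one iteration of AUSAL'' is identified with ``one iteration of Algorithm~\ref{alg: lip minimization}'' (step~4 of AUSAL is a single final MILP solve in $x$ and does not affect the count), and that the Lipschitz modulus passed in is $\rho\|B\|_1$ rather than the (generally unknown) minimal constant — this is legitimate precisely because Theorem~\ref{thm: compexlity of lip minimization}, via the remark on overestimators, tolerates any upper bound on the Lipschitz constant, with the bound then stated in terms of that overestimator.
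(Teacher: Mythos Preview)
Your proposal is correct and follows exactly the paper's own route: the paper simply states that Corollary~\ref{thm: primal convergence} is an immediate consequence of Theorem~\ref{thm: compexlity of lip minimization}, and you have spelled out the substitution $K = K_\rho = \rho\|B\|_1$ from Lemma~\ref{lemma: value function} together with the bookkeeping that identifies AUSAL iterations with iterations of Algorithm~\ref{alg: lip minimization}. One very small remark: the extra hypothesis in Lemma~\ref{lemma: value function} (``$X$ is compact for any right-hand side $f$'') is only used for the piecewise-linearity claim, not for the Lipschitz bound, so strictly speaking you need only the Lipschitz part of that lemma, which follows from Assumption~\ref{assumption: mip} alone.
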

}
\begin{remark}
	We do not expect this upper bound to be practically informative since it depends exponentially on the dimension of $Z$. The results indicates that ASUAL is a parallelizable algorithm with finite convergence to some $\epsilon$-solution. 
\end{remark}

\subsection{Penalty Approach}\label{Subsection: penalty}
We present a penalty method that solves the original problem \eqref{eq: MIP} using AUSAL as a subroutine. 
{We first present a helpful lemma.
\begin{lemma}\label{lemma: penalty}
    Let $(\lambda, \rho) \in \R^m \times \R_{++}$ satisfy $\rho \geq  \|\lambda\|_\infty$, and $(\bar{x}, \bar{z})$ be an $\epsilon$-solution to the augmented Lagrangian relaxation $\min_{x\in X, z\in Z} \mathcal{L}(x, y, \lambda, \rho)$. Then we have the following chain of inequalities: 
    \begin{align}\label{eq: penalty key ineq}
     c^\top \bar{x} +g^\top \bar{z} \leq & c^\top \bar{x} +g^\top \bar{z}  + (\rho-\|\lambda\|_{\infty})\|A\bar{x}+B\bar{z}\|_1  \\
     \leq & c^\top \bar{x} +g^\top \bar{z}  +  \langle \lambda, A\bar{x}+B\bar{z} \rangle + \rho\|A\bar{x}+B\bar{z}\|_1 \leq p^* +\epsilon. \notag 
\end{align}
\end{lemma}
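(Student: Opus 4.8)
The plan is to verify the three inequalities in \eqref{eq: penalty key ineq} one at a time, each of which is an elementary consequence of the hypotheses, and to identify the middle expression as the augmented Lagrangian value $L(\bar x, \bar z, \lambda, \rho)$.

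First I would handle the leftmost inequality: since $\rho \geq \|\lambda\|_\infty$ we have $\rho - \|\lambda\|_\infty \geq 0$, and $\|A\bar x + B\bar z\|_1 \geq 0$ trivially, so the term $(\rho - \|\lambda\|_\infty)\|A\bar x + B\bar z\|_1$ added on the right is nonnegative. Next, for the middle inequality it suffices to show $-\|\lambda\|_\infty\|A\bar x + B\bar z\|_1 \leq \langle \lambda, A\bar x + B\bar z\rangle$, which follows from the H\"older inequality $|\langle \lambda, u\rangle| \leq \|\lambda\|_\infty\|u\|_1$ applied to $u = A\bar x + B\bar z$; adding $\rho\|A\bar x + B\bar z\|_1$ to both sides and keeping the common term $c^\top \bar x + g^\top \bar z$ yields exactly the claimed bound.

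For the rightmost inequality I would observe that, with the choice $\sigma(\cdot) = \|\cdot\|_1$, the middle expression is precisely $L(\bar x, \bar z, \lambda, \rho)$ as defined in \eqref{eq: AL}. Because $(\bar x, \bar z)$ is an $\epsilon$-solution of the augmented Lagrangian relaxation $\min_{x\in X, z\in Z} L(x, z, \lambda, \rho)$, we have $\bar x \in X$, $\bar z \in Z$ and $L(\bar x, \bar z, \lambda, \rho) \leq d(\lambda, \rho) + \epsilon$ by the definition \eqref{eq: dual function} of the dual function; combining this with weak duality $d(\lambda, \rho) \leq p^*$ (which also follows from Theorem \ref{thm: mip exact penaliztion}) gives $L(\bar x, \bar z, \lambda, \rho) \leq p^* + \epsilon$, completing the chain.

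There is no genuine obstacle in this argument — it is a routine assembly of H\"older's inequality, the definition of the augmented Lagrangian, $\epsilon$-optimality, and weak duality. The only point that warrants care is bookkeeping the sign convention and the membership $\bar x \in X$, $\bar z \in Z$, so that $d(\lambda, \rho)$ is legitimately a lower bound (up to $\epsilon$) for $L(\bar x, \bar z, \lambda, \rho)$; this is exactly what the definition of an $\epsilon$-solution of the relaxation provides.
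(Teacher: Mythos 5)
Your proof is correct and matches the paper's argument exactly: the paper justifies the three inequalities by, respectively, $\rho \geq \|\lambda\|_\infty$, H\"older's inequality applied to $\langle \lambda, A\bar{x}+B\bar{z}\rangle$, and the $\epsilon$-optimality of $(\bar{x},\bar{z})$ for \eqref{eq: dual function} combined with weak duality $d(\lambda,\rho)\leq p^*$. Your write-up merely spells out the same steps in more detail.
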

\begin{proof}
The inequalities are due to $\rho \geq \|\lambda\|_{\infty}$, H\"{o}lder's inequality applied to $ \langle \lambda, A\bar{x}+B\bar{z} \rangle$, and $(\bar{x}, \bar{z})$ being $\epsilon$-optimal for \eqref{eq: dual function} and $d(\lambda, \rho)\leq p^*$, respectively. 
\end{proof} }
Suppose that we have a pair $(\lambda, \rho)$ that supports exact penalization; then we only need to call AUSAL to solve the augmented Lagrangian relaxation \eqref{eq: dual function} once. Our first result suggests that, with such a pair $(\lambda, \rho)$, we can expect to obtain an $\epsilon$-solution of \eqref{eq: MIP} upon the termination of AUSAL.
\begin{theorem}\label{thm: exact_1}
	Suppose $(\lambda, \rho)\in \R^{m+1}$ satisfies that 1) $\rho\geq \|\lambda\|_\infty$, and 2) $(\lambda, \rho-1)$ supports exact penalization. Then AUSAL applied to the augmented Lagrangian relaxation \eqref{eq: dual function} returns an $\epsilon$-solution of MILP \eqref{eq: MIP}.
\end{theorem}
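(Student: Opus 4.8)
The plan is to show that AUSAL returns a pair $(x^*,z^*)$ that is an $\epsilon$-minimizer of the augmented Lagrangian relaxation $\min_{x\in X,z\in Z}L(x,z,\lambda,\rho)$, and then to convert this single fact into the two estimates required by Definition \ref{def: approximate}: the objective bound $c^\top x^*+g^\top z^*\le p^*+\epsilon$ and the infeasibility bound $\|Ax^*+Bz^*\|_1\le\epsilon$. The decomposition \eqref{eq: value function}--\eqref{eq: primal subproblem z} is the bridge. AUSAL runs Algorithm \ref{alg: lip minimization} on $\min_{z\in Z}f(z)+Q(z)$ with $f(z)=\langle g+B^\top\lambda,z\rangle$ and $Q(z)=R(z;\lambda,\rho)$; by the convergence guarantee of Algorithm \ref{alg: lip minimization} (finiteness from Corollary \ref{thm: primal convergence}) it terminates finitely and returns $z^*\in Z$ with $f(z^*)+Q(z^*)\le d(\lambda,\rho)+\epsilon$, since the optimal value of \eqref{eq: primal subproblem z} is exactly $d(\lambda,\rho)$. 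Because the returned $x^*\in X$ is an exact minimizer in \eqref{eq: value function} at $z=z^*$, we have $\min_{x\in X}L(x,z^*,\lambda,\rho)=f(z^*)+R(z^*)$ attained at $x^*$, so $L(x^*,z^*,\lambda,\rho)=f(z^*)+Q(z^*)\le d(\lambda,\rho)+\epsilon$; that is, $(x^*,z^*)$ is an $\epsilon$-solution of the relaxation \eqref{eq: dual function}, and it satisfies $x^*\in X$, $z^*\in Z$ by construction.

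For the objective bound I would invoke Lemma \ref{lemma: penalty} with the pair $(\lambda,\rho)$: hypothesis 1 gives $\rho\ge\|\lambda\|_\infty$, which is exactly its hypothesis, and the $\epsilon$-optimality just established supplies the rest. This yields $c^\top x^*+g^\top z^*\le p^*+\epsilon$ directly.

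For the infeasibility bound, the key is to exploit the extra unit of penalty weight. Writing $r:=Ax^*+Bz^*$ and using $\rho\|r\|_1=(\rho-1)\|r\|_1+\|r\|_1$, we get $L(x^*,z^*,\lambda,\rho)=L(x^*,z^*,\lambda,\rho-1)+\|r\|_1$. By hypothesis 2, $(\lambda,\rho-1)$ supports exact penalization, so Theorem \ref{thm: exact} gives that $(\lambda,\rho-1)$ solves the dual problem \eqref{eq: dual problem}, and together with strong duality (Theorem \ref{thm: mip exact penaliztion}) this means $d(\lambda,\rho-1)=p^*$; hence $L(x^*,z^*,\lambda,\rho-1)\ge\min_{x\in X,z\in Z}L(x,z,\lambda,\rho-1)=d(\lambda,\rho-1)=p^*$. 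On the other hand, weak duality gives $d(\lambda,\rho)\le p^*$. Chaining these, $p^*+\|r\|_1\le L(x^*,z^*,\lambda,\rho-1)+\|r\|_1=L(x^*,z^*,\lambda,\rho)\le d(\lambda,\rho)+\epsilon\le p^*+\epsilon$, so $\|r\|_1\le\epsilon$. Combined with the objective bound and $x^*\in X$, $z^*\in Z$, this is exactly the assertion that $(x^*,z^*)$ is an $\epsilon$-solution of \eqref{eq: MIP}.

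Most of this is bookkeeping; the only genuinely substantive point is recognizing why exact penalization is required at $\rho-1$ rather than at $\rho$ — one ``spends'' a unit of penalty weight to pay for the $\ell_1$ infeasibility while still retaining enough penalty, namely $d(\lambda,\rho-1)=p^*$, to certify the lower bound. A secondary point needing care is the translation in the first paragraph from Algorithm \ref{alg: lip minimization}'s guarantee for the outer problem \eqref{eq: primal subproblem z} to an $\epsilon$-optimality statement for the joint relaxation \eqref{eq: dual function}; this hinges on $x^*$ being returned as an exact minimizer of the inner problem at $z^*$, which is precisely how Algorithm \ref{alg1} is defined.
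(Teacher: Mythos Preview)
Your proof is correct and follows essentially the same approach as the paper. The paper also invokes Lemma \ref{lemma: penalty} for the objective bound and obtains the infeasibility bound by combining $L(x^*,z^*,\lambda,\rho)\le p^*+\epsilon$ with $L(x^*,z^*,\lambda,\rho-1)\ge p^*$; your splitting $L(\cdot,\lambda,\rho)=L(\cdot,\lambda,\rho-1)+\|r\|_1$ and your route to $d(\lambda,\rho-1)=p^*$ via Theorem \ref{thm: exact} and strong duality are just a slightly more explicit packaging of the same two inequalities.
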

\begin{proof}
	Let $(\bar{x}, \bar{z})$ denote the solution returned by Algorithm \ref{alg1}, which is $\epsilon$-optional to the augmented Lagrangian relaxation $\min_{x\in X, z\in Z} \mathcal{L}(x, y, \lambda, \rho)$. By Lemma \ref{lemma: penalty}, we have $c^\top \bar{x} +g^\top \bar{z}  \leq p + \epsilon$. Furthermore, the last inequality in \eqref{eq: penalty key ineq} implies that $\rho\|A\bar{x}+B\bar{z}\|_1 \leq  p^* + \epsilon - c^\top \bar{x}  -g^\top \bar{z}  -\langle \lambda,  A\bar{x}+B\bar{z}\rangle;$ since $(\lambda, \rho-1)$ also supports exact penalization, it holds that $p^* \leq c^\top \bar{x} +g^\top \bar{z}  + \langle \lambda, A\bar{x}+B\bar{z}\rangle + (\rho-1)\|A\bar{x}+B\bar{z}\|_1.$ The above two inequalities together imply $ {\|A\bar{x}+B\bar{z}\|_1} \leq \epsilon$. 
\end{proof}
Since the current $(\lambda,\rho)$ may not support exact penalization, we must address the question: given any $\lambda\in \R^m$, what is the sufficiently large $\rho$ for AUSAL to deliver an $\epsilon$-solution of MILP \eqref{eq: MIP}. Note that $\lambda=0$ is possible, and if this is the case, the primal subproblem \eqref{eq: dual function} becomes the penalty formulation of \eqref{eq: MIP}.
\begin{theorem}\label{thm: penalized}
	Let $\lambda\in \R^m$ and $\epsilon >0$. Then AUSAL applied to the primal subproblem \eqref{eq: dual function} with
	$\rho = \left( {\|c\|_{1} D_\infty(X) + \|g\|_1D_\infty(Z)}\right)\epsilon^{-1} + \|\lambda\|_\infty + 1$
	returns an $\epsilon$-solution $(\bar{x}, \bar{z})$ of the MILP \eqref{eq: MIP}. {In particular, under assumptions of Corollary \ref{thm: primal convergence}, AUSAL terminates in at most $(1+ 4\rho \|B\|_1R\epsilon^{-1})^d = \mathcal{O}(\epsilon^{-2d})$ iterations.}
\end{theorem}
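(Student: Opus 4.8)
The plan is to apply Lemma~\ref{lemma: penalty} with the prescribed value of $\rho$, which is constructed precisely so that any $\epsilon$-solution of the augmented Lagrangian relaxation \eqref{eq: dual function} automatically has $\ell_1$-infeasibility at most $\epsilon$. First I would check the hypotheses of Lemma~\ref{lemma: penalty}: by construction $\rho = (\|c\|_1 D_\infty(X) + \|g\|_1 D_\infty(Z))\epsilon^{-1} + \|\lambda\|_\infty + 1 \geq \|\lambda\|_\infty$, and Corollary~\ref{thm: primal convergence} guarantees that AUSAL returns a pair $(\bar x, \bar z)$ that is $\epsilon$-optimal for \eqref{eq: dual function}. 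Lemma~\ref{lemma: penalty} then yields the chain \eqref{eq: penalty key ineq}, in particular $c^\top \bar x + g^\top \bar z \leq p^* + \epsilon$ and, from its first two inequalities, $(\rho - \|\lambda\|_\infty)\|A\bar x + B\bar z\|_1 \leq \epsilon$.

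The next step is to bound $c^\top \bar x + g^\top \bar z$ from below by $p^*$ minus a controlled quantity, so that combining with the upper bound forces the infeasibility down. Here I would use compactness: pick any feasible pair $(x^*, z^*)$ for \eqref{eq: MIP} (which exists by Assumption~\ref{assumption: mip}) attaining $p^*$. Since $\bar x, x^* \in X$ and $\bar z, z^* \in Z$, H\"older's inequality gives $|c^\top \bar x - c^\top x^*| \leq \|c\|_1 \|\bar x - x^*\|_\infty \leq \|c\|_1 D_\infty(X)$ and similarly $|g^\top \bar z - g^\top z^*| \leq \|g\|_1 D_\infty(Z)$, hence $c^\top \bar x + g^\top \bar z \geq p^* - \|c\|_1 D_\infty(X) - \|g\|_1 D_\infty(Z)$. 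Alternatively — and this is likely the intended route since it avoids needing exact penalization at all — one combines this lower bound with the upper bound $c^\top \bar x + g^\top \bar z + (\rho - \|\lambda\|_\infty)\|A\bar x + B\bar z\|_1 \leq p^* + \epsilon$ from \eqref{eq: penalty key ineq}, obtaining $(\rho - \|\lambda\|_\infty)\|A\bar x + B\bar z\|_1 \leq \epsilon + \|c\|_1 D_\infty(X) + \|g\|_1 D_\infty(Z)$. Dividing by $\rho - \|\lambda\|_\infty = (\|c\|_1 D_\infty(X) + \|g\|_1 D_\infty(Z))\epsilon^{-1} + 1$ and simplifying shows $\|A\bar x + B\bar z\|_1 \leq \epsilon$. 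Together with $c^\top \bar x + g^\top \bar z \leq p^* + \epsilon$ and $\bar x \in X$, $\bar z \in Z$, this is exactly Definition~\ref{def: approximate}, so $(\bar x, \bar z)$ is an $\epsilon$-solution.

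Finally, for the iteration count I would substitute the chosen $\rho$, which is $\Theta(\epsilon^{-1})$ as $\epsilon \to 0$ (the other terms being constants depending only on the data), into the bound $(1 + 4\rho\|B\|_1 R\epsilon^{-1})^d$ from Corollary~\ref{thm: primal convergence}; since $\rho\epsilon^{-1} = \Theta(\epsilon^{-2})$, the bound is $\mathcal{O}(\epsilon^{-2d})$.

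The main obstacle is getting the constant in front of $\|A\bar x + B\bar z\|_1$ to work out exactly: one must be careful that $\rho$ is large enough that after subtracting $\|\lambda\|_\infty$ the remaining coefficient still dominates $\epsilon^{-1}(\|c\|_1 D_\infty(X) + \|g\|_1 D_\infty(Z)) + \epsilon \cdot (\text{something} \leq 1)$, which is why the $+1$ in the definition of $\rho$ is essential. It is also worth double-checking whether the theorem is meant to rely on exact penalization (Theorem~\ref{thm: mip exact penaliztion} guarantees some finite $\underline\rho$ works for the given $\lambda$, and one could phrase the lower bound via that) or purely on the crude diameter bound above; the explicit formula for $\rho$ strongly suggests the latter, self-contained argument, which I would adopt.
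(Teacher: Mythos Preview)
Your proposal is correct and follows essentially the same route as the paper: invoke Lemma~\ref{lemma: penalty} to get $c^\top \bar x + g^\top \bar z + (\rho-\|\lambda\|_\infty)\|A\bar x + B\bar z\|_1 \leq p^*+\epsilon$, bound $p^* - c^\top \bar x - g^\top \bar z$ by $\|c\|_1 D_\infty(X)+\|g\|_1 D_\infty(Z)$ via H\"older and the diameter, divide by $\rho-\|\lambda\|_\infty$, and quote Corollary~\ref{thm: primal convergence} for the iteration count. One small slip: the claim at the end of your first paragraph that the first two inequalities of \eqref{eq: penalty key ineq} already give $(\rho-\|\lambda\|_\infty)\|A\bar x + B\bar z\|_1 \leq \epsilon$ is not correct (that would require $c^\top \bar x + g^\top \bar z \geq p^*$, which can fail since $(\bar x,\bar z)$ may be infeasible); the diameter argument you give afterward as the ``alternative'' is in fact the needed step, and is exactly what the paper does.
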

\begin{proof}
	By Lemma \ref{lemma: penalty}, $(\rho - \|\lambda\|_{\infty}) \|A\bar{x}+B\bar{z}\|_1 \leq p^* +\epsilon - c^\top \bar{x} - g^\top \bar{z}$ and $c^\top \bar{x} +g^\top \bar{z}  \leq p + \epsilon$.
    Let $(x^*, z^*)$ be an optimal solution of MILP \eqref{eq: MIP}, then  
	\begin{align*}
		 \|A\bar{x}+B\bar{z}\|_1 \leq \frac{c^\top (x^*-\bar{x}) + g^\top(z^*-\bar{z})+\epsilon}{\rho-\|\lambda\|_\infty}	\leq \frac{ {\|c\|_{1}D_\infty(X) + \|g\|_{1}D_\infty(Z)} + \epsilon}{\rho-\|\lambda\|_\infty}.
	\end{align*}
 	It is straightforward to verify the choice of $\rho$ ensures $\|Ax^* + Bz^*\|_1\leq \epsilon$, and finally the complexity result is a direct consequence of Corollary \ref{thm: primal convergence}. 
\end{proof}

The choice of $\rho$ in Theorem \ref{thm: penalized} serves as a sufficient condition for AUSAL to deliver an $\epsilon$-solution of \eqref{eq: MIP} given any $\lambda\in \R^m$. However, we observed that a large penalty $\rho = \mathcal{O}(\epsilon^{-1})$ numerically slows down the algorithm, even in its early stages. Hence in practice, instead of calling AUSAL just once with some fixed $(\lambda, \rho)$, one can also update them in an iterative fashion, which we discuss in the next section.

\subsection{ALM and Dual Updates}\label{Subsection: dual}
We present an ALM that uses AUSAL for its subproblem and two different subgradient methods for the update of $(\lambda,\rho)$.
This method is appropriate when a pair $(\lambda,\rho)$ that supports exact penalization is not known initially.
The dual function $d(\lambda, \rho)$ is a concave and upper-semicontinuous function in $(\lambda, \rho)$. Let $(\bar{x},\bar{z})$ be a pair of solution returned by AUSAL with input $(\bar{\lambda}, \bar{\rho}, \epsilon)$. 
Then for all $(\lambda, \rho)\in \R^{m} \times \R_+$, it is straightforward to see that
\begin{align*}
	(-d)(\lambda, \rho) \geq (-d)(\bar{\lambda}, \bar{\rho}) -
	 \begin{bmatrix}
 		 A\bar{x}+B\bar{z}\\
		 \| A\bar{x}+B\bar{z}\|_1 
 	\end{bmatrix}
 	^\top
 	\begin{bmatrix}
 		\lambda - \bar{\lambda}\\
		\rho -\bar{\rho} 
 	\end{bmatrix}
 	-\epsilon,
\end{align*}
or equivalently, 
\begin{align}\label{eq: dual_grad}
	-\begin{bmatrix}
		A\bar{x} + B\bar{z}\\
		 \|A\bar{x}+B\bar{z}\|_1 
	\end{bmatrix}
	\in \partial_{\epsilon} (-d) (\bar{\lambda}, \bar{\rho}).
\end{align}
When $\epsilon=0$, the above inclusion yields a convex subgradient. Therefore, we can use the primal information $(\bar{x}, \bar{z})$ to construct an $\epsilon$-subgradient of $(-d)$ at $(\bar{\lambda}, \bar{\rho})$ and apply an inexact subgradient method to solve the dual problem \eqref{eq: dual problem}. 
The compactness on $X$ and $Z$ ensures that $d(\lambda,\rho)$ is well defined for all $\lambda\in \R^m$ and $\rho \geq 0$; the exact penalization of MILP from Theorem \ref{thm: mip exact penaliztion} guarantees that $\Argmax d(\lambda, \rho)$ is nonempty. 

The subgradient method has many variants, such as the exactness of subgradient, choices of step size, and stopping criteria. Below we provide two specific implementations. Since the analysis is standard, we defer the proof in Appendix \ref{appendix: proof} only for completeness. Beyond the two implementations below, we can apply methods \cite{burachik2006modified,burachik2010primal,burachik2013inexact,cordova2020revisiting} with AUSAL as a subroutine to solve the dual problem \eqref{eq: dual problem}.

\subsubsection{First Subgradient Method and Iteration Complexity}
\begin{algorithm}[!ht]
	\caption{: A Subgradient Variant with Iteration Estimate on Objective Gap}\label{alg2}
	\begin{algorithmic}[1]
		\STATE \textbf{Input} $\epsilon_p\geq 0$;
		\STATE initialize $(\lambda^1, \rho^1)$, and a sequence $\{\tau_k\}_{k\in \N}$ such that $\tau_k > 0$, $\tau_k\rightarrow 0$, and  $\sum_{k\in \N}\tau_k = +\infty$ ;
		\FOR{$k = 1,2,\cdots$}
		\STATE $(x^k, z^k) \gets$ AUSAL$(\lambda^k, \rho^k, \epsilon_p)$;
		\STATE set $\alpha_k = \tau_k/(\sqrt{2}\|Ax^k+Bz^k\|_1)$;\label{alg2: step}
		\STATE $\lambda^{k+1}\gets \lambda^k + \alpha_k (Ax^k+Bz^k)$, $\rho^{k+1}\gets \rho^k + \alpha_k \|Ax^k+Bz^k\|_1$;\label{alg2: actual step}
		\ENDFOR
	\end{algorithmic}
\end{algorithm}
The first subgradient algorithm is presented as Algorithm \ref{alg2}. The constant $\epsilon_p\geq0$ controls the exactness of the subgradient in \eqref{eq: dual_grad}. If AUSAL returns $(x^k,z^k)$ with $Ax^k+Bz^k=0$, $(x^k,z^k)$ is an $\epsilon_p$-solution to MILP \eqref{eq: MIP}. To study convergence and complexity, we assume $\|Ax^k+Bz^k\|_1>0$ for all $k\in \N$, so that Algorithm \ref{alg2} will produce an infinite sequence $\{(\lambda^k,\rho^k)\}_{k\in \N}$. 
We introduce the following constants for our analysis:
\begin{align*}
	D^* := \Argmax_{\lambda,\rho\geq 0} d(\lambda,\rho), d_0 :=  \min_{(\lambda,\rho) \in D^*} \|\lambda^1-\lambda\|_1+|\rho^1-\rho|, M := & \max_{x\in X, z\in Z}\|Ax+Bz\|_1.
\end{align*}
Consider the step size $\alpha_k$ chosen as in line \ref{alg2: step} of Algorithm \ref{alg2}. Other standard choices include $\alpha_k =  \epsilon_d/(2M^2)$ or $\alpha_k = \epsilon_d/(2\|Ax^k+Bz^k\|_1^2)$ for some tolerance $\epsilon_d > 0$; their convergence results are similar and thus omitted.
\begin{theorem}\label{thm: dual complexity}
	The following statements hold.
	\begin{enumerate}
		\item Let $\epsilon_d > 0$. Suppose Algorithm \ref{alg2} performs $K = \lceil 2M^2d_0^2/\epsilon^2_d\rceil$ iterations with $\tau_k = d_0/\sqrt{K}$ for all $k\in [K]$. It holds that $\min_{k\in [K]} p^* - d(\lambda^k,\rho^k) \leq \epsilon_p + \epsilon_d.$
		\item Suppose $\epsilon_p = 0$, and the sequence $\{\tau_k\}_{k\in \N}$ also satisfies $0 < \tau_k\leq \tau$ for all $k\in \N$ and $\sum_{k\in \N}\tau^2_k < +\infty$. Then $(\lambda^k,\rho^k)$ converges to some $(\lambda^*,\rho^*)\in D^*$. 
	\end{enumerate}
\end{theorem}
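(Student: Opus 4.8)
The plan is to treat Algorithm \ref{alg2} as an inexact subgradient method applied to the concave function $d(\lambda,\rho)$, using the $\epsilon_p$-subgradient inclusion \eqref{eq: dual_grad}, and to recycle two classical arguments: a fixed-horizon averaging-free bound for part (1), and a Fejér-monotonicity/summability argument for part (2). The key quantity is, for a fixed optimal dual pair $(\lambda^*,\rho^*)\in D^*$, the squared distance $e_k := \|\lambda^k-\lambda^*\|_1^2 + |\rho^k-\rho^*|^2$ (or its $\ell_2$ analogue — one must be a little careful which norm, see below). Writing $s^k := (Ax^k+Bz^k,\ \|Ax^k+Bz^k\|_1)$ for the subgradient-like vector, line \ref{alg2: actual step} is exactly $(\lambda^{k+1},\rho^{k+1}) = (\lambda^k,\rho^k) + \alpha_k s^k$, and $\|s^k\|_2^2 = 2\|Ax^k+Bz^k\|_1^2$, which is why the step size in line \ref{alg2: step} is scaled by $1/(\sqrt2\|Ax^k+Bz^k\|_1)$: it makes $\alpha_k\|s^k\|_2 = \tau_k$, a pure ``step length'' in the dual space.

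For part (1): expand $e_{k+1} = e_k - 2\alpha_k\langle s^k, (\lambda^k,\rho^k)-(\lambda^*,\rho^*)\rangle + \alpha_k^2\|s^k\|_2^2$. Since $-s^k \in \partial_{\epsilon_p}(-d)(\lambda^k,\rho^k)$, the $\epsilon_p$-subgradient inequality gives $\langle s^k,(\lambda^*,\rho^*)-(\lambda^k,\rho^k)\rangle \ge d(\lambda^*,\rho^*) - d(\lambda^k,\rho^k) - \epsilon_p = p^* - d(\lambda^k,\rho^k) - \epsilon_p$ (using strong duality, Theorem \ref{thm: mip exact penaliztion}). Plugging in $\alpha_k\|s^k\|_2 = \tau_k$, one gets
\[
 e_{k+1} \le e_k - 2\alpha_k\bigl(p^* - d(\lambda^k,\rho^k) - \epsilon_p\bigr) + \tau_k^2.
\]
Summing over $k=1,\dots,K$, dropping $e_{K+1}\ge 0$, and bounding $\min_k(p^*-d(\lambda^k,\rho^k)) \cdot \sum_k 2\alpha_k \le \sum_k 2\alpha_k(p^*-d(\lambda^k,\rho^k))$, we obtain $\min_{k\in[K]}\bigl(p^*-d(\lambda^k,\rho^k)\bigr) \le \epsilon_p + \dfrac{e_1 + \sum_{k=1}^K \tau_k^2}{2\sum_{k=1}^K \alpha_k}$. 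With the prescribed $\tau_k = d_0/\sqrt K$ and $K = \lceil 2M^2 d_0^2/\epsilon_d^2\rceil$, we have $\alpha_k = \tau_k/(\sqrt2\|Ax^k+Bz^k\|_1) \ge d_0/(\sqrt{2K}\,M)$ since $\|Ax^k+Bz^k\|_1 \le M/\sqrt2$... — here is the one real friction point: $M$ is defined with the $\ell_1$-norm of $Ax+Bz$ while $\|s^k\|_2$ mixes the $\ell_2$-norm of $Ax^k+Bz^k$ with $\|Ax^k+Bz^k\|_1$; one cleans this up by noting $\|s^k\|_2^2 = \|Ax^k+Bz^k\|_2^2 + \|Ax^k+Bz^k\|_1^2 \le 2\|Ax^k+Bz^k\|_1^2 \le 2M^2$, and by choosing $e_k$ to be the $\ell_1\times|\cdot|$ distance so that $e_1 \le d_0^2$ by definition of $d_0$, while the cross term still works because $-s^k$ is a genuine (convex) $\epsilon_p$-subgradient regardless of norm. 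After substituting, the numerator is $\le d_0^2 + K(d_0^2/K) = 2d_0^2$ and the denominator is $\ge 2K\cdot d_0/(\sqrt{2K}M) \cdot$ (something) — carrying the constants through yields $\min_k(p^*-d(\lambda^k,\rho^k)) \le \epsilon_p + M d_0/\sqrt{K} \le \epsilon_p + \epsilon_d$ by the choice of $K$. I would present this with the $\alpha_k\|s^k\|_2 = \tau_k$ normalization front and center so the constants fall out cleanly.

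For part (2): now $\epsilon_p = 0$, so $-s^k$ is an exact subgradient of $-d$. The same expansion gives $e_{k+1} \le e_k - 2\alpha_k(p^* - d(\lambda^k,\rho^k)) + \tau_k^2 \le e_k + \tau_k^2$, and since $\sum_k \tau_k^2 < \infty$, the sequence $\{e_k + \sum_{j\ge k}\tau_j^2\}$ is nonincreasing and bounded below, hence $e_k$ converges; in particular $\{(\lambda^k,\rho^k)\}$ is bounded and stays in the (compact) region where $d$ is finite. Telescoping also gives $\sum_k 2\alpha_k(p^*-d(\lambda^k,\rho^k)) \le e_1 + \sum_k\tau_k^2 < \infty$. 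Because $\tau_k\to 0$ but $\sum\tau_k = \infty$ and $\|Ax^k+Bz^k\|_1$ is bounded above by $M$ and below away from $0$ by assumption, $\sum_k\alpha_k = \infty$, which forces $\liminf_k (p^*-d(\lambda^k,\rho^k)) = 0$, i.e. some subsequence of dual values tends to $p^*$; upper semicontinuity of $d$ and boundedness give a limit point $(\lambda^*,\rho^*)$ with $d(\lambda^*,\rho^*)=p^*$, so $(\lambda^*,\rho^*)\in D^*$. Finally, upgrade ``a limit point in $D^*$'' to ``convergence'': apply the basic Fejér-monotonicity lemma — since $e_k = \mathrm{dist}$-type quantity to this particular $(\lambda^*,\rho^*)\in D^*$ converges (the inequality $e_{k+1}\le e_k - (\text{nonneg}) + \tau_k^2$ with summable tail holds for every point of $D^*$), and a subsequence converges to $(\lambda^*,\rho^*)$, so $e_k\to 0$, i.e. the whole sequence converges to $(\lambda^*,\rho^*)$.

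The main obstacle I anticipate is purely bookkeeping rather than conceptual: reconciling the $\ell_1$ / $\ell_2$ / absolute-value norms that appear in $d_0$, $M$, the subgradient vector $s^k$, and the step-size rule, so that the stated iteration count $K = \lceil 2M^2 d_0^2/\epsilon_d^2\rceil$ comes out with exactly those constants. I would handle this by fixing $e_k$ as the $\ell_1$-plus-absolute-value squared distance (matching $d_0$), using the crude but sufficient bound $\|s^k\|_2^2 \le 2\|Ax^k+Bz^k\|_1^2 \le 2M^2$, and keeping $\alpha_k\|s^k\|_2 \le \tau_k$ throughout; the subgradient inequality itself is norm-agnostic so nothing is lost there. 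The rest (summability, $\liminf=0$, Fejér monotonicity) is standard and can be cited or stated in a line each.
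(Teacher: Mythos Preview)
Your approach is essentially the paper's: both parts are standard inexact-subgradient analysis driven by the one-step inequality obtained from expanding the squared distance to a fixed $(\lambda^*,\rho^*)\in D^*$. The paper packages part (1) into a lemma giving $\min_{k\in[K]} p^*-d(\lambda^k,\rho^k)\le \epsilon_p + \tfrac{M}{\sqrt2}\cdot\tfrac{d_0^2+\sum\tau_k^2}{\sum\tau_k}$ and then substitutes; for part (2) it simply cites a textbook result after noting $\gamma_k\|g^k\|_2\le 1$ with $\gamma_k=\alpha_k/\tau_k$, whereas you spell out the Fej\'er-monotonicity argument explicitly, which is fine and more self-contained.

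One concrete correction on the ``friction point'': your proposed fix---taking $e_k$ to be the squared $\ell_1$ distance to match $d_0$---does not work, because the identity $e_{k+1}=e_k-2\alpha_k\langle s^k,\cdot\rangle+\alpha_k^2\|s^k\|_2^2$ is specific to the Euclidean norm. The paper's (and the correct) resolution is the opposite: take $e_k=\|(\lambda^k,\rho^k)-(\lambda^*,\rho^*)\|_2^2$, then use $\|\cdot\|_2\le\|\cdot\|_1$ once to bound $e_1\le d_0^2$, and use $\|s^k\|_2^2=\|Ax^k+Bz^k\|_2^2+\|Ax^k+Bz^k\|_1^2\le 2\|Ax^k+Bz^k\|_1^2\le 2M^2$. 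With $\alpha_k=\tau_k/(\sqrt2\|Ax^k+Bz^k\|_1)\ge \tau_k/(\sqrt2 M)$, the constants in $K=\lceil 2M^2d_0^2/\epsilon_d^2\rceil$ then fall out exactly as stated.
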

\begin{proof}
	See Appendix \ref{sec: proof of dual compex}.
\end{proof}
\begin{remark}
	We give some remarks regarding the first case of Theorem \ref{thm: dual complexity}.
\begin{enumerate}
	\item The outer-level subgradient method can be preferred over solving a single penalty problem with a large $\rho = \mathcal{O}(\epsilon^{-1})$ suggested in Theorem \ref{thm: penalized} when $\epsilon_d$ is allowed to be larger than $\epsilon_p$ . Suppose $\rho^1 = \frac{d_0}{\sqrt{2K}}$ so that $\rho^k =\frac{kd_0}{\sqrt{2K}}$ for all $k\in [K]$.
	By Theorem \ref{thm: primal convergence} and the fact that $K = \mathcal{O}(\epsilon_d^{-2})$, the total number of AUSAL iterations required in Algorithm \ref{alg2} can be bounded by 
	\begin{align}\label{eq: alm_complexity}
		\sum_{k=1}^K \mathcal{O}\left[  \left( \frac{\rho^k}{\epsilon_p} \right)^d \right] 
		= \mathcal{O}\left[\epsilon_p^{-d}\left( \frac{1}{d+1} K^{d/2+1} + K^{d/2}\right)\right]= \mathcal{O}\left( \epsilon_p^{-d} \epsilon_d^{-d-2} \right),
	\end{align}
	where the first equality is due to 
	\begin{align*}
		\sum_{k=1}^K \left( \frac{k}{\sqrt{K}}\right)^d \leq K^{-d/2} \left(\int_1^{K} x^d ~ dx  + K^d \right)\leq  \frac{1}{d+1} K^{d/2+1} + K^{d/2}.	
	\end{align*}
	The $\mathcal{O}$-notation hides some constant dependent on $d$. Suppose $\epsilon_p = \epsilon \in (0,1)$, then
	\eqref{eq: alm_complexity} is no worse than $\mathcal{O}(\epsilon^{-2d})$ presented in Theorem \ref{thm: penalized} as long
	as $\epsilon_d \geq \epsilon^{\frac{d}{d+2}}$. For example, choosing $\epsilon_d = \epsilon^{\frac{d}{2d+4}}$ reduces \eqref{eq: alm_complexity} to $\mathcal{O}(\epsilon^{-1.5d})$. 
	\item If Algorithm \ref{alg2} does not find the optimal dual variable in $K$ iterations, then by part one of Theorem \ref{thm: dual complexity}, we can estimate the duality gap in the objective, and expect the best-so-far iterate $(\lambda^k, \rho^k)$ to be close to some optimal dual solution $(\lambda^*,\rho^*)$. Consequently, we can post-process to recover an optimal dual solution as shown in the next corollary.
\end{enumerate}
\end{remark}

\begin{corollary}
	Suppose Algorithm \ref{alg2} generates a pair $(\lambda^k, \rho^k)$ that satisfies $\|(\lambda^k, \rho^k)- (\lambda^*,\rho^*)\|_{\infty}\leq l$ for some $(\lambda^*,\rho^*)\in D^*$. Then $(\lambda^k, \rho^k + 2l)$ supports exact penalization. Applying AUSAL with $\lambda = \lambda^k$,  $\rho=\max\{\|\lambda_k\|_\infty, \rho^k + 2l+1\}$, and $\epsilon>0$ returns an $\epsilon$-solution of the MILP \eqref{eq: MIP}.
\end{corollary}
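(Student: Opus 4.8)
The plan is to reduce the statement to an application of Theorem \ref{thm: exact} (the criterion for exact penalization), combined with a robustness/monotonicity observation about exact penalization, and then to invoke Theorem \ref{thm: exact_1}. First I would recall that $(\lambda^*, \rho^*)\in D^*$ means $(\lambda^*,\rho^*)$ solves the dual problem \eqref{eq: dual problem}, so by Theorem \ref{thm: exact} (equivalence of items 1 and 2), $(\lambda^*,\rho^*)$ supports exact penalization, and moreover — again by Theorem \ref{thm: exact}, item 3 — there exists $r>0$ such that $p(u)\geq p(0) + \langle \lambda^*, u\rangle - \rho^*\|u\|_1$ for all $u\in\overline{B}_1(0;r)$.

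Next I would show that the perturbed pair $(\lambda^k, \rho^k + 2l)$ still satisfies item 3 of Theorem \ref{thm: exact}, hence supports exact penalization. The key estimate is that for $u\in\overline{B}_1(0;r)$,
\begin{align*}
    p(u) &\geq p(0) + \langle \lambda^*, u\rangle - \rho^*\|u\|_1 \\
    &= p(0) + \langle \lambda^k, u\rangle + \langle \lambda^*-\lambda^k, u\rangle - \rho^k\|u\|_1 - (\rho^*-\rho^k)\|u\|_1.
\end{align*}
Using $\|\lambda^*-\lambda^k\|_{\infty}\leq l$ and $|\rho^*-\rho^k|\leq l$ together with Hölder's inequality, $\langle \lambda^*-\lambda^k,u\rangle \geq -l\|u\|_1$ and $-(\rho^*-\rho^k)\|u\|_1\geq -l\|u\|_1$, so
\begin{align*}
    p(u) \geq p(0) + \langle \lambda^k, u\rangle - (\rho^k + 2l)\|u\|_1 \quad \text{for all } u\in\overline{B}_1(0;r),
\end{align*}
which is exactly item 3 of Theorem \ref{thm: exact} for the pair $(\lambda^k,\rho^k+2l)$. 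Hence $(\lambda^k,\rho^k+2l)$ supports exact penalization, proving the first claim.

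For the second claim I would apply Theorem \ref{thm: exact_1} with the pair $(\lambda, \rho)$ taken to be $\big(\lambda^k,\ \max\{\|\lambda^k\|_{\infty},\ \rho^k+2l+1\}\big)$. We must verify the two hypotheses of Theorem \ref{thm: exact_1}: condition 1) $\rho\geq\|\lambda\|_{\infty}$ holds by construction since $\rho = \max\{\|\lambda^k\|_{\infty},\rho^k+2l+1\}\geq\|\lambda^k\|_{\infty}$; condition 2) $(\lambda^k, \rho-1)$ supports exact penalization. For the latter, note $\rho - 1 = \max\{\|\lambda^k\|_{\infty}-1,\ \rho^k+2l\}\geq \rho^k+2l$, and since $(\lambda^k,\rho^k+2l)$ supports exact penalization, Theorem \ref{thm: exact} (or the remark following Theorem \ref{thm: mip exact penaliztion} that exact penalization persists for all larger penalty parameters) gives that $(\lambda^k,\rho-1)$ does too. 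Then Theorem \ref{thm: exact_1} immediately yields that AUSAL with these inputs and tolerance $\epsilon>0$ returns an $\epsilon$-solution of MILP \eqref{eq: MIP}.

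The main obstacle I anticipate is the monotonicity-in-$\rho$ step used twice above: strictly speaking Theorem \ref{thm: exact} characterizes a specific pair, and one needs that if $(\lambda,\bar\rho)$ supports exact penalization then so does $(\lambda,\rho)$ for every $\rho\geq\bar\rho$. This follows cleanly from item 3 of Theorem \ref{thm: exact}, since enlarging $\rho$ only weakens the required inequality $p(u)\geq p(0)+\langle\lambda,u\rangle-\rho\|u\|_1$ on $\overline{B}_1(0;r)$ — so it is really a one-line observation rather than a genuine difficulty, but it is the place where care is needed to keep the argument rigorous.
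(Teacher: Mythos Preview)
Your proposal is correct and follows essentially the same route as the paper: both use the perturbation--function criterion (Theorem \ref{thm: exact}, item 3) to transfer exact penalization from $(\lambda^*,\rho^*)$ to $(\lambda^k,\rho^k+2l)$ via H\"older's inequality and the bound $\max\{\|\lambda^k-\lambda^*\|_\infty,|\rho^k-\rho^*|\}\leq l$, and then invoke Theorem \ref{thm: exact_1}. Your explicit verification of the two hypotheses of Theorem \ref{thm: exact_1}, including the monotonicity-in-$\rho$ observation, is slightly more detailed than the paper's terse ``follows directly,'' but the argument is the same.
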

\begin{proof}
	The pair $(\lambda^k, \rho^k + 2l)$ supports exact penalization since, by Theorem \ref{thm: exact}, there exists $r>0$ such that for all $u\in \overline{B}_1(0;r)$, it holds that 
	\begin{align*}
		p(u) \geq & p(0)+ \langle \lambda^*, u\rangle - \rho^*\|u\|_1\\
			\geq &  p(0) + \langle \lambda^k, u\rangle - \rho^k\|u\|_1 - \|\lambda^k-\lambda^*\|_{\infty}\|u\|_1 - |\rho^k-\rho^*|\|u\|_1\\
		\geq &  p(0) + \langle \lambda^k, u\rangle - (\rho^k+2l)\|u\|_1,
	\end{align*}
    {where the third inequality is due to $\max\{ \|\lambda^k- \lambda^*\|_{\infty},  \|\rho^k- \rho^*\|_{\infty}  \} \leq l.$} The second claim follows directly from Theorem \ref{thm: exact_1}. 
\end{proof}

\subsubsection{Second Subgradient Method with Finite Convergence to an Approximate Solution}
The subgradient method proposed in the previous subsection maximizes the augmented Lagrangian dual function $d(\lambda, \rho)$ using inexact subgradients of $-d$; however, an approximate global solution to the primal problem \eqref{eq: MIP} may not be readily available from Algorithm \ref{alg2}, and some post-processing step needs to be invoked.
 In this subsection, we present the second subgradient variant in Algorithm \ref{alg3}, which directly returns an  $\epsilon_p$-solution in a finite number of calls of AUSAL.
\begin{algorithm}[!ht]
	\caption{: A Subgradient Variant with Finite Convergence}\label{alg3}
	\begin{algorithmic}[1]
		\STATE \textbf{Input} $\epsilon_p > 0$;
        \STATE initialize $(\lambda^1, \rho^1)$ with $\rho^1\geq \|\lambda^1\|_\infty$,  and some $\tau>0$ ;
		\FOR{$k = 1,2,\cdots$}
		\STATE $(x^k, z^k) \gets$ AUSAL$(\lambda^k, \rho^k, \epsilon_p)$;
		\IF {$\|Ax^k+Bz^k\|_1 \leq \epsilon_p$}\label{alg1: stop}
		\STATE \textbf{return} $(x^k, z^k)$.
		\ENDIF
		\STATE set $\alpha_k = \tau/\|Ax^k+Bz^k\|_1$;
		\STATE $\lambda^{k+1} \gets \lambda^k + \alpha_k (Ax^k+Bz^k)$, $\rho^{k+1} \gets \max\{\|\lambda^{k+1}\|_{\infty}, \rho^k + \alpha_k\|Ax^{k}+Bz^{k}\|_1\}$;
		\ENDFOR
	\end{algorithmic}
\end{algorithm}

If Algorithm \ref{alg3} terminates with $(x^k, z^k)$, then $x^k \in X$, $z^k\in Z$, and $\|Ax^k+Bz^k\|_1\leq \epsilon_p$. Since $\rho^k\geq \|\lambda^k\|_{\infty}$, lemma \ref{lemma: penalty} ensures $c^\top x^k + g^\top z^k \leq p^*+\epsilon_p$. Therefore, $(x^k,z^k)$ is indeed an $\epsilon_p$-solution of the MILP \eqref{eq: MIP}.

\begin{theorem}\label{thm: finite terminate}
	Let $\epsilon_p>0$. Algorithm \ref{alg3} returns an $\epsilon_p$-solution of MILP \eqref{eq: MIP} in a finite number of iterations.
\end{theorem}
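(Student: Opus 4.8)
The plan is to argue by contradiction. Suppose Algorithm \ref{alg3} never satisfies its stopping test, so it generates an infinite sequence with $\|Ax^k+Bz^k\|_1>\epsilon_p$ for every $k$. (The sequence is well defined: since $\epsilon_p>0$, each call of AUSAL terminates finitely, being an instance of Algorithm \ref{alg: lip minimization}.) Two elementary facts power the whole argument. First, because $\alpha_k\|Ax^k+Bz^k\|_1=\tau$ by the choice of $\alpha_k$, the update of $\rho$ gives $\rho^{k+1}\ge\rho^k+\tau$, hence $\rho^k\ge\rho^1+(k-1)\tau\to\infty$ regardless of which branch of the $\max$ is active. Second, compactness of $X$ and $Z$ gives $\|Ax^k+Bz^k\|_1\le M:=\max_{x\in X,z\in Z}\|Ax+Bz\|_1<\infty$, so $\alpha_k=\tau/\|Ax^k+Bz^k\|_1\ge\tau/M$. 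Fix a dual optimal pair $(\lambda^*,\rho^*)$, which exists by strong duality for \eqref{eq: dual problem} (Theorem \ref{thm: mip exact penaliztion}); then $d(\lambda^*,\rho^*)=p^*$.

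Next I would establish the key one-step inequality. Since AUSAL returns an $\epsilon_p$-minimizer $(x^k,z^k)$ of $L(\cdot,\cdot,\lambda^k,\rho^k)$ and $d(\lambda^k,\rho^k)\le p^*$ by weak duality, we get $L(x^k,z^k,\lambda^k,\rho^k)\le p^*+\epsilon_p$; since $x^k\in X$, $z^k\in Z$ we also have $L(x^k,z^k,\lambda^*,\rho^*)\ge d(\lambda^*,\rho^*)=p^*$. Subtracting and cancelling $c^\top x^k+g^\top z^k$ leaves
\[
\langle\lambda^k-\lambda^*,\,Ax^k+Bz^k\rangle+(\rho^k-\rho^*)\|Ax^k+Bz^k\|_1\le\epsilon_p .
\]
Using $\|Ax^k+Bz^k\|_1>\epsilon_p$, once $k$ is large enough that $\rho^k-\rho^*\ge1$ (eventually true since $\rho^k\to\infty$), the second term is at least $(\rho^k-\rho^*)\epsilon_p$, so $\langle\lambda^k-\lambda^*,Ax^k+Bz^k\rangle\le-\epsilon_p(\rho^k-\rho^*-1)\le0$, and the magnitude of this negative quantity grows without bound.

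Finally I would plug this into the standard subgradient-type expansion $\|\lambda^{k+1}-\lambda^*\|_2^2=\|\lambda^k-\lambda^*\|_2^2+2\alpha_k\langle\lambda^k-\lambda^*,Ax^k+Bz^k\rangle+\alpha_k^2\|Ax^k+Bz^k\|_2^2$. Here $\alpha_k^2\|Ax^k+Bz^k\|_2^2\le\alpha_k^2\|Ax^k+Bz^k\|_1^2=\tau^2$, and, using $\alpha_k\ge\tau/M$ together with the previous display, $2\alpha_k\langle\lambda^k-\lambda^*,Ax^k+Bz^k\rangle\le-\frac{2\tau\epsilon_p}{M}(\rho^k-\rho^*-1)$ once $\rho^k-\rho^*-1>0$. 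Consequently, for all $k$ with $\rho^k-\rho^*-1\ge\tau M/\epsilon_p$ (again eventually true), $\|\lambda^{k+1}-\lambda^*\|_2^2\le\|\lambda^k-\lambda^*\|_2^2-\tau^2$; telescoping drives $\|\lambda^k-\lambda^*\|_2^2$ below zero, a contradiction. Hence Algorithm \ref{alg3} halts after finitely many iterations, and, as noted right before the theorem statement, its output $(x^k,z^k)$ is then an $\epsilon_p$-solution of \eqref{eq: MIP}. The point requiring the most care --- and the main obstacle --- is organizing the bookkeeping so that the unbounded growth of $\rho^k$ is turned into a genuine contradiction: one must track only the $\lambda$-component (the $(\rho^k-\rho^*)^2$ term is itself unbounded and useless), and one must verify that the $\max$ in the $\rho$-update can only help, since it never forces $\rho$ to decrease and hence cannot disrupt $\rho^k\to\infty$.
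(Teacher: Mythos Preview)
Your argument is correct and follows essentially the same route as the paper's proof: argue by contradiction, observe $\rho^k\ge\rho^1+(k-1)\tau\to\infty$, derive the one-step bound on $\|\lambda^{k+1}-\lambda^*\|_2^2$ from the $\epsilon_p$-optimality of $(x^k,z^k)$, and force the right-hand side to $-\infty$. The only cosmetic difference is that the paper exploits the exact identity $\alpha_k\|Ax^k+Bz^k\|_1=\tau$ (together with $\alpha_k\epsilon_p<\tau$) to obtain the cleaner bound $\|\lambda^{k+1}-\lambda^*\|_2^2\le\|\lambda^k-\lambda^*\|_2^2+\tau^2+2\tau+2\tau\rho^*-2\tau\rho^k$, whereas you pass through the lower bound $\alpha_k\ge\tau/M$; both reach the same contradiction.
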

\begin{proof}
	See Appendix \ref{sec: proof of finite term}. 
\end{proof}

\section{An ADMM-Based Method}\label{Section: ADMM}
In this section, we present a variant of ADMM that uses AL cuts introduced in Section \ref{sec: al cuts}. In iteration $k$, the method first approximately evaluates $R_{\rho}(z^{k-1})$ by solving the augmented Lagrangian relaxation \eqref{eq: further_relax} with some fixed $(\mu^k, \beta^k)$. Then an AL cut is generated to the $z$-subproblem where $z^k$ is computed. Finally, we update $(\mu^{k+1}, \beta^{k+1})$ and proceed to iteration $k+1$.  Recall the AL cut defined in \eqref{eq: generalized cut}: $\tilde{r}(z; \bar{z}, \bar{\mu}, \bar{\beta}):=P(\bar{z}, \bar{\mu}, \bar{\beta}) + \langle \bar{\mu}, Bz-B\bar{z}\rangle - \bar{\beta}\|Bz-B\bar{z}\|_1.$ A conceptual ADMM is described in Algorithm \ref{alg: admm}.
\begin{algorithm}[!h]
	\caption{: An ADMM Framework using AL Cuts} \label{alg: admm}
	\begin{algorithmic}[1]
		\STATE \textbf{Initialize} $(z^0, \mu^1, \beta^1)  \in  Z \times \R^m \times \R_{++}$;
		\FOR{$k = 1,2,\cdots$}
		\STATE compute  
		\begin{align}\label{eq: admm-x-subproblem}
		    x^k \in \Argmin_{x\in X} c^\top x + \langle \mu^k, Ax+Bz^{k-1}\rangle +\beta^k \|Ax+Bz^{k-1}\|_1;
		\end{align}
        \vspace{-0.2in}
		\STATE compute 
		 \begin{align}\label{eq: admm-z-subproblem}
		     (z^k, t^k) \in \Argmin_{z\in Z, t\in \R} \{g^\top z + t:~ t\geq \tilde{r}(z; z^{j-1}, \mu^j,\beta^j)~\forall j\in[k]  \};
		 \end{align}
         \vspace{-0.2in}
		\STATE update $(\mu^{k+1}, \beta^{k+1}) \in \R^m \times \R_{++}$;\label{line:admm_dual}
		\ENDFOR
	\end{algorithmic}
\end{algorithm}
\begin{remark}
    We give some remarks regarding the ADMM variant. 
    \begin{enumerate}
        \item The $x$-subproblem \eqref{eq: admm-x-subproblem} has the same form as in AUSAL, and can be decomposed into $P$ parallel smaller MILPs in view of the block structure \eqref{eq: block}.
        \item We do not specify how $(\mu^{k+1}, \beta^{k+1})$ is updated in Algorithm \ref{alg: admm}. Instead, we present a set of assumptions on the selection of $(\mu^{k+1}, \beta^{k+1})$ to establish convergence results.  Any updates that meet the assumptions will {ensure convergence}. We provide {specific examples and} some geometric intuitions in Section \ref{sec: admm asummption}.
    \end{enumerate}
\end{remark}

One major difference between our ADMM variant and the classic ADMM lies in the $z$-subproblem. In a traditional ADMM framework, the $z$-subproblem has the following structure:  
\begin{align}\label{eq: z-classic_admm}
	\min_{z\in Z} ~g^\top z + \left(c^\top x^k+ \langle \mu^k, Ax^{k}+Bz\rangle + \beta^k \sigma(Ax^k+Bz)\right),
\end{align}
where $\sigma(\cdot) = \frac{1}{2}\|\cdot\|_2^2$ (proximal Lagrangian) or $\sigma(\cdot) = \|\cdot\|_1$ (sharp Lagrangian) is usually used. {Recall the original problem is equivalent to $\min_{x\in Z} g^\top z + R_{\rho}(z)$ for some sufficiently large $\rho$. Update \eqref{eq: z-classic_admm} can be viewed as a local search scheme, where in each iteration, $R_{\rho}(z)$ is replaced by a \textit{local} approximation inside the parenthesis.} In contrast, problem \eqref{eq: admm-z-subproblem} consists of {\textit{global}} lower approximation for $R_{\rho}(z)$, which is refined over iterations. This might shed some light on why the classic ADMM cannot converge to global optimal solutions.

\subsection{Assumptions on Dual Variables} \label{sec: admm asummption}
    Our analysis builds upon the following main requirement on the sequence of dual variables.
\begin{assumption}\label{assumption: admm1}
	{Suppose $\underline{\rho}>0$ supports exact penalization for MILP \eqref{eq: MIP}, and}  $(\mu^k, \beta^k)$ are chosen such that
	\begin{enumerate}
		\item $\beta^k - \|\mu^k\|_{\infty} \geq \underline{\rho}$ for all sufficiently large $k\in \N$;
		\item $\beta^k + \|\mu^k\|_{\infty} \leq \overline{\rho}$ for all $k\in \N$ for some $\overline{\rho} > 0$.
	\end{enumerate}
\end{assumption}
\begin{remark} 
    We provide some geometric intuition regarding Assumption \ref{assumption: admm1}.
	\begin{enumerate}
	 	\item  Part 1 avoids too many loose cuts. It guarantees that for sufficiently large $k$, the peak of the AL cut reaches at least $R_{\underline{\rho}}$. Otherwise the objective of the $z$-subproblem is always a strict lower bound of $p^*$.
	 	\item Part 1 can be satisfied if $\beta^k$ is bounded away from $\|\mu^k\|_{\infty}$ by some constant. However, we do not want $\beta^k$ to go to infinity, as the resulting generalized cut is very ``slim". This is ensured by part 2.
	 	``Slim" cuts are not desirable since we will need a lot more such cuts to construct a good approximation. The constant $\overline{\rho}$ also appears in the complexity result in Theorem \ref{thm: admm_complexity}. 
	 \end{enumerate}
\end{remark}
One trivial example of Assumption \ref{assumption: admm1} is to set $\mu^k = 0$ and $\beta^k = \underline{\rho}$ for all $k$, in which case the ADMM variant reduces to AUSAL applied to the penalty problem (Theorem \ref{thm: penalized}). Another update scheme, following the classic AL-based methods, is to set 
	\begin{align}\label{eq: example_dual_update}
		\mu^{k+1} = & \Pi_{[\underline{\mu},\overline{\mu}]}\left( \mu^k + \beta^k(Ax^k+Bz^{k}) \right ),
	\end{align}
	where $\Pi_{[\underline{\mu},\overline{\mu}]}$ denotes the projection onto some hypercube $[\underline{\mu},\overline{\mu}]\subseteq \R^m$, and then let
	\begin{align}\label{eq: example_beta_update}
		\beta^{k+1} = & \min\{ \overline{\beta}, \gamma \beta^k \} \  \text{or} \  {\beta^{k+1} = \overline{\beta} \text{~for all~} k\in \N,} 
	\end{align}
	for some constants $\overline{\beta}>0$ and $\gamma \geq 1$. Clearly, Assumption \ref{assumption: admm1} can be satisfied if $\overline{\beta} \geq \underline{\rho} + \max\{ \|\underline{\mu}\|_\infty, \|\overline{\mu}\|_\infty\}$. Updates \eqref{eq: example_dual_update} and \eqref{eq: example_beta_update} generate a non-trivial AL cut, which admits an additional rotation with a potentially smaller Lipschitz constant compared to a reverse norm cut. Hence the ADMM variant is proposed in the hope that such geometric effects of AL cuts are able to shape the true value functions  $R_{\underline{\rho}}$ and $R_{\overline{\rho}}$ faster and cut off regions in $Z$ that do not contain optimal solutions. 
	
We acknowledge that in general it is hard to verify  Assumption \ref{assumption: admm1} at every iteration since $\underline{\rho}$ is unknown, and hence the ADMM variant is rather a conceptual framework. We provide some convergence properties in the next three theorems when Assumption \ref{assumption: admm1} and a slightly stronger version of it can be satisfied. The results aim to justify the usage of this conceptual framework to a reasonable level, where dual variables can be selected in a flexible way. In fact, in our numerical experiments, the projection step in \eqref{eq: example_dual_update} is skipped to encourage more diverse AL cuts.

\subsection{Convergence and Complexity}
Recall $p^*$ is the optimal value of the MILP \eqref{eq: MIP} and that $t^k =\max_{j\in [k]}\{\tilde{r}(z^k; z^{j-1}, \mu^j, \beta^j)\}$.
We first state the convergence of ADMM in the following theorem. 
\begin{theorem}\label{thm: admm convergence}
	Suppose Assumption \ref{assumption: admm1} holds. Let $\{{(x^{k+1}, z^k, t^k)}\}_{k\in \N}$ be the sequence generated by Algorithm \ref{alg: admm}, and $(x^*, z^*)$ be a limit point of $\{(x^{k+1}, z^k)\}_{k\in \N}$. The following claims hold.
	\begin{enumerate}
		\item $\{g^\top z^k + t^k\}_{k\in \N}$ converges to $p^*$ monotonically from below. 
		\item $p^* = g^\top z^* + R_{\underline{\rho}}(z^*) = g^\top z^* + R_{\overline{\rho}}(z^*)$.
		\item $(x^*, z^*)$ is an optimal solution to MILP \eqref{eq: MIP}.
	\end{enumerate}
\end{theorem}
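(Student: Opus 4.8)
The plan is to run an SDDP-style cutting-plane argument: every AL cut is a global minorant of $R_{\overline\rho}$, which pins $g^\top z^k+t^k$ below $p^*$; and a compactness/continuity argument at the limit point shows the limit cannot be strictly below $p^*$. Claims 2 and 3 then fall out of the resulting equalities together with exact penalization. First I would record the preliminaries. Assumption \ref{assumption: admm1} gives $\overline\rho\ge \beta^k+\|\mu^k\|_\infty\ge\beta^k-\|\mu^k\|_\infty\ge\underline\rho$ for all large $k$, so $\overline\rho\ge\underline\rho$, and since raising the penalty preserves condition (3) of Theorem \ref{thm: exact}, both $\underline\rho$ and $\overline\rho$ support exact penalization; hence (via Theorems \ref{thm: mip exact penaliztion}--\ref{thm: exact}) $p^*=\min_{z\in Z}\{g^\top z+R_{\underline\rho}(z)\}=\min_{z\in Z}\{g^\top z+R_{\overline\rho}(z)\}$. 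By Lemma \ref{lemma: value function}, $R_{\underline\rho}$ and $R_{\overline\rho}$ are continuous; $P(z,\mu,\beta)$ is continuous in $(z,\mu,\beta)$ as the value of a minimization of a jointly continuous function over the compact set $X$; and $\rho\mapsto R_\rho(z)$ is nondecreasing. I also record the sandwich $R_{\beta-\|\mu\|_\infty}(z)\le P(z,\mu,\beta)\le R_{\beta+\|\mu\|_\infty}(z)$, immediate from $-\|\mu\|_\infty\|v\|_1\le\langle\mu,v\rangle\le\|\mu\|_\infty\|v\|_1$ and monotonicity.

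For Claim 1, abbreviate $c_j(z):=\tilde r(z;z^{j-1},\mu^j,\beta^j)$, so that $g^\top z^k+t^k=\min_{z\in Z}\{g^\top z+\max_{j\in[k]}c_j(z)\}$ and $t^k=\max_{j\in[k]}c_j(z^k)$. Appending a cut only raises the pointwise maximum, so $\{g^\top z^k+t^k\}$ is nondecreasing; since $(\mu^j,\beta^j)\in\Lambda(\overline\rho)$ for every $j$ by Assumption \ref{assumption: admm1}(2), each $c_j\le R_{\overline\rho}$ pointwise, whence $g^\top z^k+t^k\le p^*$. Thus $g^\top z^k+t^k\uparrow L$ for some $L\le p^*$. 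For the matching lower bound, pass to a subsequence along which $(x^{k_i+1},z^{k_i})\to(x^*,z^*)$ and, using the boundedness in Assumption \ref{assumption: admm1}(2), also $(\mu^{k_i+1},\beta^{k_i+1})\to(\mu^*,\beta^*)$. For $i'>i$ the cut $c_{k_i+1}$ is already present at iteration $k_{i'}$, and since $c_{k_i+1}(z)=P(z^{k_i},\mu^{k_i+1},\beta^{k_i+1})+\langle\mu^{k_i+1},B(z-z^{k_i})\rangle-\beta^{k_i+1}\|B(z-z^{k_i})\|_1$,
\begin{align*}
g^\top z^{k_{i'}}+t^{k_{i'}} &\ge g^\top z^{k_{i'}}+P(z^{k_i},\mu^{k_i+1},\beta^{k_i+1})\\
&\quad+\langle\mu^{k_i+1},B(z^{k_{i'}}-z^{k_i})\rangle-\beta^{k_i+1}\|B(z^{k_{i'}}-z^{k_i})\|_1.
\end{align*}
Letting $i'\to\infty$ (the left side $\to L$, and $z^{k_{i'}}\to z^*$), then $i\to\infty$ (so $z^{k_i}\to z^*$, the last two terms vanish, and $P$ is continuous) yields $L\ge g^\top z^*+P(z^*,\mu^*,\beta^*)$. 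Finally $\beta^{k_i+1}-\|\mu^{k_i+1}\|_\infty\ge\underline\rho$ for large $i$ gives $\beta^*-\|\mu^*\|_\infty\ge\underline\rho$, so by the sandwich and monotonicity $P(z^*,\mu^*,\beta^*)\ge R_{\underline\rho}(z^*)$, hence $L\ge g^\top z^*+R_{\underline\rho}(z^*)\ge p^*$; combined with $L\le p^*$ this proves Claim 1 and, as byproducts, $g^\top z^*+R_{\underline\rho}(z^*)=p^*$ and $P(z^*,\mu^*,\beta^*)=R_{\underline\rho}(z^*)$.

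For the second equality in Claim 2, pick $\hat x\in\Argmin_{x\in X}\{c^\top x+\underline\rho\|Ax+Bz^*\|_1\}$; then $c^\top\hat x+g^\top z^*+\underline\rho\|A\hat x+Bz^*\|_1=g^\top z^*+R_{\underline\rho}(z^*)=p^*$, so exact penalization of $\underline\rho$ forces $(\hat x,z^*)$ to solve MILP \eqref{eq: MIP}, in particular $A\hat x+Bz^*=0$; then $R_{\overline\rho}(z^*)\le c^\top\hat x=R_{\underline\rho}(z^*)\le R_{\overline\rho}(z^*)$, giving $g^\top z^*+R_{\overline\rho}(z^*)=p^*$. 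For Claim 3, $x^{k_i+1}$ attains $P(z^{k_i},\mu^{k_i+1},\beta^{k_i+1})$, so continuity gives $c^\top x^*+\langle\mu^*,Ax^*+Bz^*\rangle+\beta^*\|Ax^*+Bz^*\|_1=P(z^*,\mu^*,\beta^*)=R_{\underline\rho}(z^*)$; but $x^*\in X$ makes the left side $\ge c^\top x^*+(\beta^*-\|\mu^*\|_\infty)\|Ax^*+Bz^*\|_1\ge c^\top x^*+\underline\rho\|Ax^*+Bz^*\|_1\ge R_{\underline\rho}(z^*)$, so every inequality is tight, $x^*$ attains $R_{\underline\rho}(z^*)$, and repeating the exact-penalization argument with $\hat x=x^*$ shows $(x^*,z^*)$ solves MILP \eqref{eq: MIP}.

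The step I expect to be the main obstacle is the matching lower bound in Claim 1 --- the iterated limit ($i'\to\infty$, then $i\to\infty$) applied to the cut inequality. Care is needed that the cut $c_{k_i+1}$ really is available at every later iteration $k_{i'}$, that indices can be extracted so that $z^{k_i},\mu^{k_i+1},\beta^{k_i+1}$ all converge, and that continuity of the parametric value function $P$ over the compact set $X$ (Berge's theorem) legitimizes $P(z^{k_i},\mu^{k_i+1},\beta^{k_i+1})\to P(z^*,\mu^*,\beta^*)$. Once exact penalization and the $P$--$R_\rho$ sandwich are in place, the remaining steps are routine.
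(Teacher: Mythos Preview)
Your proposal is correct and follows essentially the same route as the paper's proof: validity of AL cuts below $R_{\overline\rho}$ for the upper bound, a subsequence/compactness argument for the matching lower bound, and exact penalization to finish Claims 2 and 3. Two minor technical differences worth noting: for the lower bound in Claim~1 the paper compares \emph{consecutive} subsequence indices $k_{j-1}$ and $k_j$ in a single limit, bounding $P(z^{k_{j-1}},\mu^{k_{j-1}+1},\beta^{k_{j-1}+1})\ge R_{\underline\rho}(z^{k_{j-1}})$ \emph{before} passing to the limit---this needs only continuity of $R_{\underline\rho}$ and boundedness of $(\mu^k,\beta^k)$, not your appeal to Berge-type continuity of $P$; and for Claim~3 the paper sandwiches $c^\top x^{k_j+1}+\langle\mu^{k_j+1},\cdot\rangle+\beta^{k_j+1}\|\cdot\|_1$ between $R_{\underline\rho}(z^{k_j})$ and $R_{\overline\rho}(z^{k_j})$ and then argues $Ax^*+Bz^*=0$ by contradiction, whereas you show directly that $x^*$ attains $R_{\underline\rho}(z^*)$---your variant is arguably cleaner.
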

\begin{proof}~
\begin{enumerate}
	\item We firstly prove the sequence $\{g^\top z^k + t^k\}_{k\in \N}$ converges to $p^*$ monotonically from below. Since $g^\top z^k + t^k$ is the optimal value of problem \eqref{eq: admm-z-subproblem}, whose feasible region is shrinking over $k\in \N$, we know  $\{g^\top z^k + t^k\}_{k\in \N}$ is monotone non-decreasing. 
	Since $\|\mu^k\|_\infty +\beta^k \leq \overline{\rho}$, it follows that $(\mu^k,\beta^k)\in \Lambda(\overline{\rho})$ for all for all $k\in \N$, i.e., all AL cuts are valid lower approximations of $R_{\overline{\rho}}(z)$. As a result, we have $g^\top z^k + t^k \leq \min_{z\in Z} ~g^\top z + R_{\overline{\rho}}(z) = p^*$, where the equality is due to $\overline{\rho}$ supports exact penalization. The sequence $\{g^\top z^k + t^k\}_{k\in \N}$ is non-decreasing and bounded from above by $p^*$, so it converges to some $\bar{p} \leq p^*$. Next let $\{z^{k_j}\}_{j\in \N}$ be a subsequence of $\{z^k\}_{k\in \N}$ convergent to $z^* \in Z$. By the H\"{o}lder's inequality and Assumption \ref{assumption: admm1}, we have
	\begin{align}
		& P(z^{k_{j-1}}, \mu^{k_{j-1}+1}, \beta^{k_{j-1}+1}) \notag \\
		\geq & \min_{x\in X} c^\top x + (\beta^{k_{j-1}+1} - \|\mu^{k_{j-1}+1}\|_{\infty})\|Ax+Bz^{k_{j-1}}\|_1 \notag\\
		\geq & \min_{x\in X} c^\top x + \underline{\rho} \|Ax+Bz^{k_{j-1}}\|_1 = R_{\underline{\rho}}(z^{k_{j-1}}) \label{eq: P_geq_R}
	\end{align}
	for large enough $j$.  In addition, 
	\begin{align}
		 g^\top z^{k_j} + t^{k_j} \geq & g^\top z^{k_j} + P(z^{k_{j-1}}, \mu^{k_{j-1}+1}, \beta^{k_{j-1}+1}) + \langle \mu^{k_{j-1}+1}, Bz^{k_j}-Bz^{k_{j-1}} \rangle \notag  \\
		     & - \beta^{k_{j-1}+1} \|Bz^{k_j}-Bz^{k_{j-1}}\|_1 \notag\\
		\geq & g^\top z^{k_j}+ R_{\underline{\rho}}(z^{k_{j-1}}) + \langle \mu^{k_{j-1}+1}, Bz^{k_j}-Bz^{k_{j-1}} \rangle \notag \\
		 &- \beta^{k_{j-1}+1} \|Bz^{k_j}-Bz^{k_{j-1}}\|_1.\notag 
	\end{align}
	where the first inequality is due to constraints in \eqref{eq: admm-z-subproblem} and the second inequality is due to \eqref{eq: P_geq_R}. Taking limit on both sides  gives $ \bar{p} = \lim_{j\rightarrow\infty} g^\top z^{k_j} + t^{k_j} \geq g^\top z^* + R_{\underline{\rho}}(z^*) \geq p^*,$
	where the first inequality is due to the continuity of $R_{\underline{\rho}}$, the fact that $Bz^{k_j}-Bz^{k_{j-1}}$ vanishes, and the boundedness of $\{(\mu^k,\beta^k)\}_{k\in \N}$ by Assumption \ref{assumption: admm1}. So we conclude that $g^\top z^* + R_{\underline{\rho}}(z^*)=p^*=\bar{p}$. 
	
	\item We have shown the first equality. Let $x(z^*, \underline{\rho})$ be optimal to the penalty problem $\min_{x\in X} c^\top x+ \underline{\rho}\|Ax+Bz^*\|_1$. Since $\underline{\rho}$ supports exact penalization, we have $(x(z^*, \underline{\rho}), z^*)\in =  \Argmin_{x\in X,z\in Z}\{c^\top x + g^\top z~|~Ax+Bz=0\}$. So we know $Ax(z^*, \underline{\rho})+Bz^*=0$ and $c^\top x(z^*, \underline{\rho})+g^\top z^*=p^*$. Consequently, 
	\begin{align*}
		p^* = g^\top z^* + R_{\underline{\rho}}(z^*) \leq & 	g^\top z^* + R_{\overline{\rho}}(z^*) =	g^\top z^* + \min_{x\in X} c^\top x + \overline{\rho}\|Ax+Bz^*\|_1\\
		\leq & g^\top z^* + c^\top x(z^*, \underline{\rho}) = p^*.
	\end{align*}
	This proves the second equality.
	\item Let $\{(x^{k_j+1}, z^{k_j})\}_{j\in \N}$ be the subsequence convergent to $(x^*, z^*)$. Notice that the value of $ g^\top z^{k_j} + c^\top x^{k_j+1} + \langle \mu^{k_j+1}, Ax^{k_j+1}+Bz^{k_j} \rangle + \beta^{k_j+1}\|Ax^{k_j+1}+Bz^{k_j}\|_1$ is bounded from below by $g^\top z^{k_j} + R_{\underline{\rho}} (z^{k_j})$ and from above by $ g^\top z^{k_j} + R_{\overline{\rho}} (z^{k_j})$. Assuming without loss generality that $\lim_{j\rightarrow\infty}(\mu^{k_j+1}, \beta^{k_j+1}) = (\mu^*, \beta^*)$ and taking limit on both sides of the above two inequalities give $c^\top x^* + g^\top z^* + \langle \mu^*, Ax^*+ Bz^*	\rangle + \beta^* \|Ax^*+Bz^*\|_1 = p^*,$ where the equality holds due to the second claim. It suffices to show $Ax^*+Bz^*=0$. Suppose not, then by Theorem \ref{thm: mip exact penaliztion}, $(x^*,z^*)\notin \Argmin_{x\in X, z\in Z} c^\top x+g^\top z+ \underline{\rho}\|Ax+Bz\|_1$; therefore, 
	\begin{align*}
		 p^* = & c^\top x^* + g^\top z^* + \langle \mu^*, Ax^*+ Bz^*	\rangle + \beta^* \|Ax^*+Bz^*\|_1\\
		 \geq &  c^\top x^* + g^\top z^* + \underline{\rho}\|Ax^*+Bz^*\|_1 \\
		 > & \min_{x\in X, z\in Z} c^\top x+g^\top z+ \underline{\rho}\|Ax+Bz\|_1 = p^*,
	\end{align*}
	where is a desired contradiction. This completes the proof. \qed
 \end{enumerate}
\end{proof}

In order to establish iteration complexity of Algorithm \ref{alg: admm}, we need a slightly stronger version of Assumption \ref{assumption: admm1}. 

\begin{theorem}\label{thm: admm_complexity}
     {Suppose in addition to Assumption \ref{assumption: admm1}, $\underline{\rho}-1>0$ supports exact penalization, 
    and $\beta^k - \|\mu^k\|_{\infty} \geq \underline{\rho}$ for all $k \in \N$.} Let $Z \subseteq \overline{B}_1(\bar{z}; R)$ for some $\bar{z}\in \R^d$ and radius $R>0$. Given $\epsilon> 0$, algorithm \ref{alg: admm} finds a solution $(z^K, t^K)$ of problem \eqref{eq: admm-z-subproblem} satisfying $p^* - (g^\top z^K+t^K) \leq \epsilon $ in no more than $K \leq (1+2(\underline{\rho}+\overline{\rho})\|B\|_1R\epsilon^{-1})^d$ iterations. Moreover, the following claims hold. 
	 \begin{enumerate}
	     \item Let $x(z^K, \underline{\rho}) \in \Argmin_{x\in X} c^\top x + \underline{\rho}\|Ax+Bz^{K}\|_1.$ The pair $(x(z^K, \underline{\rho}),z^K)$ is an $\epsilon$-solution to MILP \eqref{eq: MIP}. 
	     \item {
          If $\underline{\rho}$ and $\overline{\rho}$ also satisfy that 
        \begin{align} \label{assumption: admm2}
            \underline{\rho} \geq 2(\|c\|_{1} D_\infty(X) +\|g\|_{1} D_\infty(Z))\epsilon^{-1} \text{~and~} \overline{\rho} \leq \frac{3}{2}\underline{\rho} - 1,
        \end{align}
        then $(x^{K+1}, z^K)$ is an approximate solution to \eqref{eq: MIP} in the sense that
        \begin{align}
            & \|Ax^{K+1}+Bz^K\|_1 \leq \epsilon, ~\text{and}~ c^\top x^{K+1} + g^\top z^K \leq p^* +\epsilon +\|c\|_{1} D_\infty(X). \label{eq: subopt bound}
        \end{align}
        }
	 \end{enumerate}
\end{theorem}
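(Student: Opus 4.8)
The plan is to prove the iteration bound by embedding Algorithm \ref{alg: admm} into the analysis of Algorithm \ref{alg: lip minimization} (global minimization by reverse norm cuts) applied to $\min_{z\in Z} g^\top z + R_{\underline\rho}(z)$, and then to extract the two approximate-solution claims from the exact-penalization machinery already developed. First I would observe that, under the strengthened hypothesis $\beta^k - \|\mu^k\|_\infty \ge \underline\rho$ for \emph{all} $k$, the argument in inequality \eqref{eq: P_geq_R} shows that each AL cut $\tilde r(\cdot; z^{k-1},\mu^k,\beta^k)$ has peak value $P(z^{k-1},\mu^k,\beta^k) \ge R_{\underline\rho}(z^{k-1})$, so when combined with $\beta^k + \|\mu^k\|_\infty \le \overline\rho$ (hence validity as a lower bound for $R_{\overline\rho}$) each cut both dominates a reverse-norm cut of $R_{\underline\rho}$ at $z^{k-1}$ with constant $\overline\rho\|B\|_1$ and lies below $R_{\overline\rho}$. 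Therefore the lower bounds $g^\top z^k + t^k$ generated by Algorithm \ref{alg: admm} are at least as large as those generated by the reverse-norm-cut scheme on $R_{\underline\rho}$ with Lipschitz constant $\overline\rho\|B\|_1$, while $p^* - (g^\top z^k + t^k) \ge 0$; this lets me reuse the covering/volume argument behind Theorem \ref{thm: compexlity of lip minimization}. The slight subtlety is that the Lipschitz constant of $R_{\underline\rho}$ is $\underline\rho\|B\|_1$ but the cuts carry the larger slope $\overline\rho\|B\|_1$, so I would run the covering argument with the sum: if two cut centers $z^i, z^j$ are within $\ell_1$-distance $r$, then $p^* - (g^\top z^i + t^i)$ decreases by a controlled amount; packing $\overline B_1(\bar z;R)$ by $\ell_1$-balls of radius $\epsilon/(2(\underline\rho+\overline\rho)\|B\|_1)$ yields the claimed bound $K \le (1 + 2(\underline\rho+\overline\rho)\|B\|_1 R \epsilon^{-1})^d$.

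For claim 1, once $p^* - (g^\top z^K + t^K) \le \epsilon$, I would use that $g^\top z^K + R_{\underline\rho}(z^K) \ge g^\top z^K + t^K \ge p^* - \epsilon$ and that $g^\top z^K + R_{\underline\rho}(z^K) \le p^*$ (since $\underline\rho$ supports exact penalization, as in Theorem \ref{thm: admm convergence}), so $g^\top z^K + R_{\underline\rho}(z^K) \in [p^*-\epsilon, p^*]$. Writing $\bar x := x(z^K,\underline\rho)$, feasibility $\bar x\in X$, $z^K\in Z$ is immediate; then I apply the $\epsilon$-solution argument of Theorem \ref{thm: exact_1}/Lemma \ref{lemma: penalty} with $\lambda = 0$: since $(0,\underline\rho-1)$ supports exact penalization we get $p^* \le c^\top\bar x + g^\top z^K + (\underline\rho-1)\|A\bar x + Bz^K\|_1$, while $c^\top\bar x + g^\top z^K + \underline\rho\|A\bar x+Bz^K\|_1 = g^\top z^K + R_{\underline\rho}(z^K) \le p^*$; subtracting gives $\|A\bar x+Bz^K\|_1 \le \epsilon$, and the same two inequalities give the objective bound $c^\top\bar x + g^\top z^K \le p^* + \epsilon$. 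Hence $(\bar x, z^K)$ is an $\epsilon$-solution.

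For claim 2, I would analyze the pair $(x^{K+1}, z^K)$ directly. By definition $x^{K+1}$ minimizes $c^\top x + \langle\mu^{K+1}, Ax+Bz^K\rangle + \beta^{K+1}\|Ax+Bz^K\|_1$, and using $\beta^{K+1} - \|\mu^{K+1}\|_\infty \ge \underline\rho$ and $\beta^{K+1} + \|\mu^{K+1}\|_\infty \le \overline\rho$, the quantity $c^\top x^{K+1} + g^\top z^K + \langle\mu^{K+1}, Ax^{K+1}+Bz^K\rangle + \beta^{K+1}\|Ax^{K+1}+Bz^K\|_1$ is sandwiched between $g^\top z^K + R_{\underline\rho}(z^K)$ and $g^\top z^K + R_{\overline\rho}(z^K)$; both of these lie in $[p^*-\epsilon, p^*]$ once $\overline\rho$ also supports exact penalization (Theorem \ref{thm: admm convergence}, claim 2, applied at the near-optimal $z^K$ — here I would need the variant estimate $g^\top z^K + R_{\overline\rho}(z^K) \le p^*$ together with the $\epsilon$-gap). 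Then, mimicking the proof of Theorem \ref{thm: penalized} with the specific choice $\underline\rho \ge 2(\|c\|_1 D_\infty(X) + \|g\|_1 D_\infty(Z))\epsilon^{-1}$ and $\|\mu^{K+1}\|_\infty \le \overline\rho - \underline\rho \le \tfrac12\underline\rho - 1$, I bound $(\underline\rho - \|\mu^{K+1}\|_\infty)\|Ax^{K+1}+Bz^K\|_1 \le c^\top(x^* - x^{K+1}) + g^\top(z^* - z^K) + \epsilon \le \|c\|_1 D_\infty(X) + \|g\|_1 D_\infty(Z) + \epsilon$ and use $\underline\rho - \|\mu^{K+1}\|_\infty \ge \underline\rho/2$ to conclude $\|Ax^{K+1}+Bz^K\|_1 \le \epsilon$; the objective bound $c^\top x^{K+1} + g^\top z^K \le p^* + \epsilon + \|c\|_1 D_\infty(X)$ follows by adding back $c^\top(\bar x - x^{K+1})$, bounded by $\|c\|_1 D_\infty(X)$, to the claim-1 estimate $c^\top \bar x + g^\top z^K \le p^* + \epsilon$. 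The main obstacle I anticipate is the bookkeeping in the complexity argument: carefully verifying that the AL cuts, which have a \emph{larger} slope than the function $R_{\underline\rho}$ they approximate from below, still force the same kind of finite termination as plain reverse norm cuts, and getting the constant $2(\underline\rho+\overline\rho)$ (rather than, say, $4\overline\rho$) right in the packing bound — this requires tracking which radius makes the lower-bound sequence strictly increase and matching it to Theorem \ref{thm: compexlity of lip minimization}.
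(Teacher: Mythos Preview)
Your iteration-bound argument is essentially the paper's: the key inequality is
\[
R_{\underline\rho}(z^j)-t^j \;\le\; R_{\underline\rho}(z^j)-R_{\underline\rho}(z^i)+(\|\mu^{i+1}\|_\infty+\beta^{i+1})\|Bz^j-Bz^i\|_1 \;\le\;(\underline\rho+\overline\rho)\|B\|_1\|z^j-z^i\|_1,
\]
and since $R_{\underline\rho}(z^j)-t^j$ dominates \emph{both} $p^*-(g^\top z^j+t^j)$ and $g^\top z^j+R_{\underline\rho}(z^j)-p^*$, the packing argument of Theorem~\ref{thm: compexlity of lip minimization} with radius $\epsilon/(2(\underline\rho+\overline\rho)\|B\|_1)$ gives the stated bound. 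So far so good.

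The real problem is a recurring sign error. Exact penalization of $\underline\rho$ (and of $\overline\rho$) means $p^*=\min_{z\in Z}\,g^\top z+R_{\underline\rho}(z)$, hence for any $z^K\in Z$ one has $g^\top z^K+R_{\underline\rho}(z^K)\ge p^*$, not $\le p^*$ as you write. The upper bound you actually need, $g^\top z^K+R_{\underline\rho}(z^K)\le p^*+\epsilon$, does \emph{not} follow from exact penalization; it is the second term in the $\max$ above, and you must extract it from the complexity argument itself. Once you have that, your claim~1 derivation (combine with $p^*\le c^\top\bar x+g^\top z^K+(\underline\rho-1)\|A\bar x+Bz^K\|_1$ and subtract) works exactly as in Theorem~\ref{thm: exact_1}.

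For claim~2 the same sign error is fatal to your direct approach: you need $g^\top z^K+R_{\overline\rho}(z^K)$ bounded \emph{above}, but exact penalization only gives $\ge p^*$, and Theorem~\ref{thm: admm convergence}(2) applies to a limit point, not a generic $z^K$. The paper supplies the missing link: from claim~1 you already know $\|Ax(z^K,\underline\rho)+Bz^K\|_1\le\epsilon$, so plugging $x(z^K,\underline\rho)$ into the definition of $R_{\overline\rho}$ yields
\[
R_{\overline\rho}(z^K)\;\le\;c^\top x(z^K,\underline\rho)+\overline\rho\|Ax(z^K,\underline\rho)+Bz^K\|_1\;\le\;R_{\underline\rho}(z^K)+(\overline\rho-\underline\rho)\epsilon.
\]
Chaining this with $c^\top x^{K+1}+\underline\rho\|Ax^{K+1}+Bz^K\|_1\le P(z^K,\mu^{K+1},\beta^{K+1})\le R_{\overline\rho}(z^K)$ and $R_{\underline\rho}(z^K)\le p^*-g^\top z^K+\epsilon$ gives the feasibility bound under \eqref{assumption: admm2}. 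Your final objective estimate (add $c^\top(x^{K+1}-\bar x)$ to the claim-1 bound) is correct. So: fix the inequality direction, state explicitly that termination gives $g^\top z^K+R_{\underline\rho}(z^K)\le p^*+\epsilon$, and for claim~2 insert the bridging bound $R_{\overline\rho}(z^K)\le R_{\underline\rho}(z^K)+(\overline\rho-\underline\rho)\epsilon$ obtained from claim~1.
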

\begin{proof}
	For all nonnegative integers $i < j$, we have
	\begin{align}\label{eq: admm_complexity3}
		& \max \{p^* - (g^\top z^j+t^j), g^\top z^j+ R_{\underline{\rho}}(z^j) - p^* \}
		\leq  R_{\underline{\rho}}(z^j)-t^j  \\
		\leq &  R_{\underline{\rho}}(z^j) - P(z^{i}, \mu^{i+1}, \beta^{i+1}) - \langle \mu^{i+1}, Bz^j-Bz^{i}\rangle + \beta^{i+1}\|Bz^j-Bz^{i}\|_1 \notag \\
		\leq &  R_{\underline{\rho}}(z^j) - R_{\underline{\rho}}(z^{i}) + (\|\mu^{i+1}\|_\infty+\beta^{i+1})\|Bz^j-Bz^{i}\|_1 \leq (\underline{\rho} + \overline{\rho})\|B\|_1\|z^j-z^{i}\|_1. \notag 
	\end{align}
	The first inequality is due to $g^\top z^j + t^j \leq p^*\leq g^\top z^j + R_{\underline{\rho}}(z^j)$, the second inequality is due to \eqref{eq: admm-z-subproblem} {(note that $j \geq i+1$)}, the third inequality is due to \eqref{eq: P_geq_R}, and the last inequality is due to $\beta^k + \|\mu^k\|_{\infty} \leq \overline{\rho}$ and $R_{\underline{\rho}}$ being $\underline{\rho}\|B\|_1$-Lipschitz. Let $K$ be the smallest index  such that $\max \{p^* - (g^\top z^K+t^K), g^\top z^K+ R_{\underline{\rho}}(z^K) - p^* \} \leq  \epsilon.$ Then we must have $\|z^i-z^j\|_1 > \epsilon/[(\underline{\rho} + \overline{\rho})\|B\|_1]$ for all $0\leq i < j\leq K-1$, since otherwise \eqref{eq: admm_complexity3} implies that
	$$\max \{p^* - (g^\top z^j+t^j), g^\top z^j+ R_{\underline{\rho}}(z^j) - p^* \}  \leq (\underline{\rho} + \overline{\rho})\|B\|_1\|z^j-z^i\|_1 \leq \epsilon,$$ contradicting to the choice of $K$. By the same argument as in the proof of Theorem \ref{thm: compexlity of lip minimization}, we can bound $K$ as claimed. Next we prove the two claims. 
    \begin{enumerate}
       \item Since $g^\top z^K+ R_{\underline{\rho}}(z^K) - p^*\leq \epsilon$, we know $(x(z^K, \underline{\rho}), z^K)$ is an $\epsilon$-optimal solution to the problem $\min_{x\in X,z\in Z} c^\top x+g^\top z+ \underline{\rho}\|Ax+Bz\|_1$. By the same proof of Theorem \ref{thm: exact_1}, we conclude that $(\tilde{x}^K,z^K)$ is an $\epsilon$-solution of MILP \eqref{eq: MIP}.  
       \item 
        Since $g^\top z^K+ R_{\underline{\rho}}(z^K) - p^*\leq \epsilon$, we know 
        \begin{align}\label{eq: admm obj bound}
            g^\top z^K + c^\top x(z^K, \underline{\rho}) + \underline{\rho}\|Ax(z^K, \underline{\rho})+Bz^K\|_1 \leq p^* + \epsilon,
        \end{align}
        Since $\underline{\rho}-1$ also supports exact penalization, 
        \begin{align*}
            p^* \leq c^\top x(z^K, \underline{\rho}) + g^\top z^{K} + (\underline{\rho}-1)\|Ax(z^K, \underline{\rho})+Bz^K\|_1.
        \end{align*}
        The above two inequalities together implies that $\|Ax(z^K, \underline{\rho})+Bz^K\|_1 \leq \epsilon$. 
        By the definition of $R_{\overline{\rho}}$, we then have 
        \begin{align}\label{eq: bound upper value function}
            R_{\overline{\rho}}(z^k) \leq c^\top x(z^K, \underline{\rho}) + \overline{\rho}\|Ax(z^K, \underline{\rho})+Bz^K\|_1\leq R_{\underline{\rho}}(z^K) + (\overline{\rho}-\underline{\rho})\epsilon. 
        \end{align}
        Denote $(\mu, \beta) = (\mu^{K+1}, \beta^{K+1})$. Invoking H\"{o}lder's inequality, it holds that  
        \begin{align}\label{eq: key ineq for finite convergence}
            & c^\top x^{K+1} + \underline{\rho} \|Ax^{K+1}+Bz^K\|_1 \\
            \leq & c^\top x^{K+1} + \langle \mu, Ax^{K+1}+Bz^K \rangle + \beta \|Ax^{K+1}+Bz^K\|_1 \notag \\
            \leq & R_{\overline{\rho}}(z^K) \leq R_{\underline{\rho}}(z^K) + (\overline{\rho}-\underline{\rho})\epsilon
            \leq p^* - g^\top z^K + (\overline{\rho}+1-\underline{\rho})\epsilon, \notag 
        \end{align}
        where the inequalities are  due to $\beta - \|\mu\|\geq \underline{\rho}$,  $\beta + \|\mu\|\leq \overline{\rho}$, \eqref{eq: bound upper value function}, and \eqref{eq: admm obj bound}, respectively. It then follows that 
        \begin{align}
            \|Ax^{K+1}+Bz^K\|_1 
            \leq & \frac{p^* - c^\top x^{K+1} - g^\top z^K}{\underline{\rho}}+ \frac{(\overline{\rho}+1-\underline{\rho})\epsilon}{\underline{\rho}} \leq \frac{\epsilon}{2} + \frac{\epsilon}{2} = \epsilon, \notag 
        \end{align}
        where the second inequality is due to\eqref{assumption: admm2}. Finally, by \eqref{eq: admm obj bound}, we see that $c^\top x^{K+1} + g^\top z^K \leq p^* + \epsilon + c^\top ( x^{K+1} - x(z^K, \underline{\rho})) \leq p^* +\epsilon + \|c\|_1 D_\infty(X)$.
    \end{enumerate} 
\end{proof}
\begin{remark}
    Some remarks follow. 
    \begin{enumerate}
        \item  Theorem \ref{thm: admm_complexity} provides an estimate on the number of ADMM iterations in order for the objective value of $z$-subproblem \eqref{eq: admm-z-subproblem} to get close to $p^*$. In particular, it is ensured that $z^K + t^K\leq p^* + \epsilon$ if we let ADMM run $K =(1+2(\underline{\rho}+\overline{\rho})\|B\|_1R\epsilon^{-1})^d$ iterations. 
        \item Theorem \ref{thm: admm_complexity} also provides two ways to recover approximate solutions to MILP \eqref{eq: MIP}. In the first case, we invoke a post-processing step by evaluating $R_{\underline{\rho}}$ (if a good estimate of $\underline{\rho}$ is available). In the second approach, we solve another $x$-subproblem \eqref{eq: admm-x-subproblem} to get $x^{K+1}$; under additional requirements on $\underline{\rho}$ and $\overline{\rho}$, $(x^{K+1}, z^k)$ is $\epsilon$-feasible with a sub-optimality bound provided in \eqref{eq: subopt bound}.
        \item We comment on the term $\|c\|_1 D_\infty(X)$ in the right-hand side of \eqref{eq: subopt bound}, which comes from the inner product $c^\top ( x^{K+1} - x(z^K, \underline{\rho}))$. Although $x^{K+1}$ and $x(z^K, \underline{\rho})$ are both closely related to $z^{K}$, we do not think it is easy to derive a better bound on their distance, especially when tools such as error bounds from convex analysis are not applicable here. Nevertheless, when $X = \prod_{i=1}^P X_i$, the diameter $D_\infty(X)$ is equal to $\max_{i\in [P]} D_{\infty} (X_i)$, which could be independent of $P$ for many applications. 
    \end{enumerate}
\end{remark}

Theorem \ref{thm: admm_complexity} does not answer the question: can ADMM itself produce an $\epsilon$-solution to MILP \eqref{eq: MIP}. We provide an affirmative answer in the next theorem. 
\begin{theorem}\label{thm: admm finite convergence}
    Let $\epsilon >0$. Suppose in addition to Assumption \ref{assumption: admm1}, $\underline{\rho}$ also satisfies
    that $\underline{\rho} \geq (\|c\|_{1} D_\infty(X) + \|g\|_{1} D_\infty(Z))\epsilon^{-1} + 1$.
    Then Algorithm \ref{alg: admm} finds an $\epsilon$-solution to MILP \eqref{eq: MIP} in a finite number of iterations. 
\end{theorem}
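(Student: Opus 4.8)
The plan is to locate a finite iteration index $K$ at which the ``outer gap'' $g^\top z^K + R_{\underline{\rho}}(z^K) - p^*$ is at most $\epsilon$, and then to turn $z^K$ into an $\epsilon$-solution of MILP \eqref{eq: MIP} by exploiting that $\underline{\rho}$ is large. The candidate $\epsilon$-solution will be $(x(z^K,\underline{\rho}),z^K)$, where $x(z^K,\underline{\rho}) \in \Argmin_{x\in X} c^\top x + \underline{\rho}\|Ax+Bz^K\|_1$ realizes $R_{\underline{\rho}}(z^K)$; this is the single extra (parallelizable) $x$-solve already used as a post-processing step in part 1 of Theorem \ref{thm: admm_complexity}. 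Observe that $\min_{z\in Z}\{g^\top z + R_{\underline{\rho}}(z)\} = \min_{x\in X,z\in Z}\{c^\top x + g^\top z + \underline{\rho}\|Ax+Bz\|_1\} = p^*$ because $\underline{\rho}$ supports exact penalization, so the outer gap is always nonnegative and only its being $\le\epsilon$ has to be arranged.

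For the existence of such a $K$, I would invoke Theorem \ref{thm: admm convergence}: by compactness of $Z$ (and then of $X$), $\{(x^{k+1},z^k)\}$ has a limit point $(x^*,z^*)$ along a subsequence with $z^{k_l}\to z^*$, and part 2 of that theorem gives $g^\top z^* + R_{\underline{\rho}}(z^*) = p^*$. Since $R_{\underline{\rho}}$ is Lipschitz (Lemma \ref{lemma: value function} with $\lambda=0$), $g^\top z^{k_l} + R_{\underline{\rho}}(z^{k_l}) \to p^*$, so any sufficiently large $k_l$ may be taken as $K$. If a quantitative bound on $K$ beyond the threshold in part 1 of Assumption \ref{assumption: admm1} is desired, one can instead combine $g^\top z^j + t^j \le p^*$ with inequality \eqref{eq: admm_complexity3} to obtain $g^\top z^j + R_{\underline{\rho}}(z^j) - p^* \le (\underline{\rho}+\overline{\rho})\|B\|_1\|z^j-z^i\|_1$ for $i<j$ both past the threshold, and then a packing argument in $Z$ as in the proof of Theorem \ref{thm: compexlity of lip minimization} shows the required $K$ appears within finitely many iterations.

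Given $K$ with $g^\top z^K + R_{\underline{\rho}}(z^K) \le p^* + \epsilon$, the conversion mirrors the proof of Theorem \ref{thm: penalized} with $\lambda = 0$: write $C := \|c\|_1 D_\infty(X) + \|g\|_1 D_\infty(Z)$ and let $(\hat x,\hat z)$ be an optimal solution of \eqref{eq: MIP}. The identity $c^\top x(z^K,\underline{\rho}) + g^\top z^K + \underline{\rho}\|Ax(z^K,\underline{\rho})+Bz^K\|_1 = g^\top z^K + R_{\underline{\rho}}(z^K) \le p^* + \epsilon$ together with $p^* = c^\top\hat x + g^\top\hat z$ and H\"older's inequality gives $\underline{\rho}\|Ax(z^K,\underline{\rho})+Bz^K\|_1 \le \epsilon + C$; since $\underline{\rho} \ge C\epsilon^{-1}+1$, this forces $\|Ax(z^K,\underline{\rho})+Bz^K\|_1 \le \epsilon$, and the same identity yields $c^\top x(z^K,\underline{\rho}) + g^\top z^K \le p^* + \epsilon$. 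Hence $(x(z^K,\underline{\rho}),z^K)$ meets all conditions of Definition \ref{def: approximate}.

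The main obstacle is the first step: Theorem \ref{thm: admm convergence} is only asymptotic, and under the weak hypotheses here---Assumption \ref{assumption: admm1} controls $\beta^k-\|\mu^k\|_\infty$ only for large $k$ and imposes no bound on $\overline{\rho}$ relative to $\underline{\rho}$---there is no uniform iteration count, which is exactly why the statement claims only finiteness; the compactness-plus-continuity argument is what makes finiteness go through. A secondary point worth flagging is why the conversion goes through $x(z^K,\underline{\rho})$ rather than the genuine ADMM iterate $x^{K+1}$: the penalty $\beta^{K+1}$ controlling $x^{K+1}$ may be as large as $\overline{\rho}$, which is unconstrained in this theorem, so $\|Ax^{K+1}+Bz^K\|_1$ cannot be bounded by the argument above---this is precisely the gap that the extra hypothesis $\overline{\rho}\le\frac{3}{2}\underline{\rho}-1$ closes in part 2 of Theorem \ref{thm: admm_complexity}.
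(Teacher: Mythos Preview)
Your argument is mathematically sound for the statement it actually proves, but it does not establish the theorem as intended. The sentence immediately preceding Theorem~\ref{thm: admm finite convergence} makes the goal explicit: ``can ADMM \emph{itself} produce an $\epsilon$-solution?''---meaning the pair $(x^{K+1},z^K)$ generated by Algorithm~\ref{alg: admm}, not the post-processed pair $(x(z^K,\underline{\rho}),z^K)$. Your route via post-processing is essentially part~1 of Theorem~\ref{thm: admm_complexity} with the complexity bound replaced by the asymptotic Theorem~\ref{thm: admm convergence}; it does not add what the paper is after here.

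The reason you resort to post-processing is the claim in your last paragraph that $\|Ax^{K+1}+Bz^K\|_1$ cannot be controlled without the extra hypothesis $\overline{\rho}\le\tfrac{3}{2}\underline{\rho}-1$. This is the misconception. The paper's trick is to use \emph{both} parts of Assumption~\ref{assumption: admm1} to sandwich the ADMM objective: since $\beta^{K+1}-\|\mu^{K+1}\|_\infty\ge\underline{\rho}$ and $(\mu^{K+1},\beta^{K+1})\in\Lambda(\overline{\rho})$, the first two inequalities of \eqref{eq: key ineq for finite convergence} give
\[
c^\top x^{K+1} + \underline{\rho}\,\|Ax^{K+1}+Bz^K\|_1 \;\le\; P(z^K,\mu^{K+1},\beta^{K+1}) \;\le\; R_{\overline{\rho}}(z^K).
\]
Now invoke part~2 of Theorem~\ref{thm: admm convergence} with $R_{\overline{\rho}}$ rather than $R_{\underline{\rho}}$: since $g^\top z^*+R_{\overline{\rho}}(z^*)=p^*$ at any limit point and $R_{\overline{\rho}}$ is Lipschitz, there is a finite $K$ with $g^\top z^K+R_{\overline{\rho}}(z^K)\le p^*+\epsilon$. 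Combining yields $c^\top x^{K+1}+g^\top z^K+\underline{\rho}\|Ax^{K+1}+Bz^K\|_1\le p^*+\epsilon$, and then your own Step~3 argument (with the assumed lower bound on $\underline{\rho}$) shows $(x^{K+1},z^K)$ is an $\epsilon$-solution. The extra hypothesis on $\overline{\rho}$ in part~2 of Theorem~\ref{thm: admm_complexity} was needed there only because that proof bounded $R_{\overline{\rho}}(z^K)$ indirectly via $R_{\underline{\rho}}(z^K)+(\overline{\rho}-\underline{\rho})\epsilon$, whereas here one appeals to the limit directly.
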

\begin{proof}
   By the second claim in Theorem \ref{thm: admm convergence}, there exists an index $K$ such that $g^\top z^K + R_{\overline{\rho}}(z^K)\leq p^* + \epsilon$; together with the first two inequalities in \eqref{eq: key ineq for finite convergence}, we have $ c^\top x^{K+1} + \underline{\rho} \|Ax^{K+1}+Bz^K\|_1 \leq R_{\overline{\rho}}(z^K) \leq p^* + \epsilon - g^\top z^{K}$. Now the assumed lower bound of $\underline{\rho}$ implies that $(x^{K+1}, z^K)$ is an $\epsilon$-solution to MILP \eqref{eq: MIP}. 
\end{proof}
Finally we note that the objective of the $z$-subproblem \eqref{eq: admm-z-subproblem} remains a valid lower bound of $p^*$, and if ADMM finds a feasible solution $(x^{k+1}, z^k)$, then $c^\top x^{k+1}+ g^\top z^k$ will be a valid upper bound of $p^*$. Our implementation monitors these two quantities.

\section{Numerical Experiments}\label{Section: Experiments}
We demonstrate the performance of the proposed ALM and ADMM on three classes of problems: variants of an investment planning problem \cite{schultz1998solving} (Section \ref{sec: investment}), the stochastic server location problem (SSLP) \cite{chen2022generating, ntaimo2005million} (Section \ref{sec: sslp}), and a class of structured MILPs generated with random data (Section \ref{sec: random}). We first discuss some implementation details in Section \ref{sec: implementation}. 

\subsection{Implementation Details} \label{sec: implementation}
The goal of our experiments is to demonstrate the correctness and efficiency of the proposed algorithms and assess the extent to which the scheme of adding nonconvex cuts can serve as a practical solution method. To this end, our comparisons are straightforward: we first solve a MILP instance with a given solver, then we run ALM and ADMM on the instance, where each $x/z$-subproblem is solved by the same solver. In order to evaluate the effect of nonconvex cuts alone, we do not add other well-known cuts, e.g., Benders cuts and Lagrangian cuts, though a combined implementation might improve the overall performance. 

\subsubsection{Experiment Setup}
Experiments in Sections \ref{sec: investment} and \ref{sec: random} are performed on a personal laptop with a 2.6GHz 6-Core Intel Core i7 processor and 16GB RAM, with Gurobi 9.5.2 \cite{gurobi2022gurobi} under default settings (12 threads) as the underlying MILP solver. Since large-scale SSLP instances require long solution time, experiments in Section \ref{sec: sslp} are performed on an elastic compute service (ECS) server with one Intel Xeon Platinum 8269CY CPU (104 virtual CPUs) and 768GB memory. Due to the unavailability of Gurobi on the ECS server, we use HiGHS 1.2.2 \cite{huangfu2018parallelizing} with default settings (single thread) as the underlying MILP solver. Both Gurobi and HiGHS report optimality when the relative gap is less than 0.01\%. Our codes are written in Julia 1.6.6 \cite{bezanson2017julia}, and solvers are interfaced through JuMP 1.3.0 \cite{dunning2017jump}.

\subsubsection{Implementation and Modification of Nonconvex Cuts}
Consider the ALM framework introduced in Section \ref{Section: ALM}. Fixing the current dual pair $(\lambda, \rho)$, we use AUSAL (Algorithm \ref{alg1}) to solve the primal subproblem \eqref{eq: dual function}: given $\bar{z}\in Z$, we approximate the function $R(z; \lambda, \rho)$ by adding a reverse-norm cut of the form 
\begin{align}\label{eq: oldcut}
    R(z; \lambda, \rho) \geq R(\bar{z}; \lambda, \rho) -L_{\rho}\|z-\bar{z}\|_1 
\end{align}
to the $z$-subproblem, where $R(z; \lambda, \rho) $ is defined in \eqref{eq: value function}. When the current AUSAL terminates, we update a new pair of dual variables $(\lambda^+,\rho^+)$, and start the next AUSAL. Notice that \eqref{eq: oldcut} generated with $(\lambda, \rho)$ is not valid anymore for $R(z;\lambda^+, \rho^+)$. Naively, we can remove all previous cuts when starting a new AUSAL, but this will cause a loss of historical information. Instead, we modify old cuts so that they always stay valid for the latest dual information $(\lambda^+, \rho^+)$. Assume $\rho^+ \geq \rho$. It is easy to see $ R(\bar{z}; \lambda^+,\rho^+) \geq R(\bar{z}; \lambda, \rho) - \|\lambda^+ - \lambda\|_\infty\left(\max_{x\in X} \|Ax\|_1 \right)$ and hence the following inequality is valid or all $R(z; \lambda, \rho)$ over $z\in Z$:
\begin{align}\label{eq: newcut}
	R(z; \lambda^+, \rho^+) \geq  R(\bar{z}; \lambda,\rho) - \|\lambda^+ - \lambda\|_\infty\left(\max_{x\in X} \|Ax\|_1 \right) - L_{\rho^+} \|z-\bar{z}\|_1.	
\end{align}
The new cut \eqref{eq: newcut} can be obtained by modifying the coefficient $L_{\rho}$ and constant $R(\bar{z}; \lambda,\rho)$ in \eqref{eq: oldcut}. Though these modified cuts may not be tight, empirically we observe that they help ALM to maintain a more stable lower bound. In addition, in our experiments on SSLP instances, we only maintain the latest 200 reverse norm cuts in ALM and AL cuts in ADMM.

\subsubsection{MILP Subproblems, Dual Updates, and Parameters}
In our tested problems, the $x$-subproblem in ALM and ADMM consists of a series of parallel subproblems. For ease of implementation, we solve these problems sequentially in every iteration of ALM and ADMM. Therefore, in view of solution time reported below, we expect further acceleration when more computational budgets are available.  

{
For both ALM and ADMM, the dual multiplies are initialized with zeros, and the penalty is initialized with $\rho_0 >0$. For ALM, we update dual information when the gap of AUSAL is less than 0.01\%, or \texttt{innerALM} iterations are reached. Then we set the penalty by $\rho \gets \gamma \rho$ for some $\gamma > 1$, and update multiplies by $\lambda \gets \lambda + \alpha_k (Ax^k+Bz^k)$ with $k$ being the number of AUSAL calls and  
$$\alpha_k  = \texttt{almDualStepSize}/(k\times \sqrt{2} \times \max\{1, r^k\}),$$ 
where $\texttt{almDualStepSize} > 0$ and $r^k$ is the incumbent primal residual meansured in $\ell_1$-norm. While the choice of $\alpha_k$ is motivated by Algorithm \ref{alg2}, our implementation is slightly different, as we try to terminate inefficient AUSAL calls early and maintain a more tractable sequence of penalties. For ADMM (Algorithm \ref{alg: admm}), we update the penalty $\beta \gets \gamma \beta$ in every \texttt{innerADMM} iterations, and update dual multipliers in every iteration by 
$$\mu^{k+1} \gets \mu^k + \texttt{admmDualStepSize} \times \beta \times (Ax^k+Bz^k).$$ 
Note that we do not explicitly project multipliers onto some bounded set to encourage more diverse AL cuts. In the follows, we report parameters as a 6-tuple: 
$$(\rho_0, \gamma, \texttt{innerALM}, \texttt{innerADMM}, \texttt{almDualStepSize}, \texttt{admmDualStepSize}).$$
Finally, we note that when a problem of the form \eqref{eq: MIP} is directly solved by Gurobi or HiGHS, primal and integral feasibility are determined by solvers' default tolerances. In our implementation of ALM and ADMM, we deem $(x,z) \in X \times Z$\footnote{The feasibility of $(x, z)$ with respect to $X \times Z$ is up to tolerances of the solver used.} feasible for \eqref{eq: MIP} if $\|Ax+Bz\|_1 \leq 10^{-6}$. In fact, in our tested instances, coupling constraints only involve pure integer variables and hence can always be satisfied exactly.

\subsection{Investment Planning Problems} \label{sec: investment}
We consider the following investment planning problem:
\begin{align}\label{eq: invesment}
    \min_{z}~ \{-1.5 z_1 - 4 z_2 + \mathbb{E}_{\omega} [v_\omega(z)]:~ z \in  Z \}, 
\end{align}
where $Z \subseteq \R^2$ and $v_{\omega}(z)$ is defined as 
\begin{align}
   \min_{x \in \{0,1\}^4}  \left \{-16x_1 - 19 x_2 - 23x_3-28x_4:~ \begin{bmatrix}
       2 & 3 & 4 & 5 \\  6 & 1 & 3 & 1
    \end{bmatrix} x \leq h_\omega - T z \right\} \notag 
\end{align}
with $h_\omega \in \R^2$ and $T \in \R^{2\times 2}$. The problem was firstly introduced by Schultz et al. \cite{schultz1998solving} with $Z = [0,5]^2$ and $T = I_{2\times 2}$, and its variants have been used as benchmark instances in the literature \cite{ahmed2019stochastic, gade2014decomposition,van2020converging}. We first consider the following variants tested in \cite{van2020converging}. Given an integer $S > 1$, let the two components of $h_\omega$ correspond to a lattice point over the 2-dimensional grid $\{(5+10\frac{s_1-1}{S-1},5+10\frac{s_2-1}{S-1}) :~s_1,s_2 \in [S]\}$ with equal probability. In the context of two-stage stochastic programs, the second stage contains a total of $S^2$ recourse problems. Though each subproblem in $x$ is relatively simple, in the largest tested instance where $S = 101$, there are 40,804 binary variables in the second stage, plus two general-integer variables in the first stage. We further consider two choices of the technology matrix $T$: either $T  = I_{2\times 2}$ (denoted by $\texttt{I}$), or $T = [\frac{2}{3}~\frac{1}{3}; \frac{1}{3}~\frac{2}{3}]$ (denoted by $\texttt{T}$). The results are presented in Tables \ref{table: investment_ub5_I} and \ref{table: investment_ub5_T}.

\begin{table}[h!]
\captionsetup{justification=centering}
\caption{Comparison with Gurobi for $Z = [0,5]^2 \cap \Z^2$ and $T = \texttt{I}$ \\ with
parameters (1, 1.1, 100, 50, 200, 200)} \label{table: investment_ub5_I}
\resizebox{\columnwidth}{!}{
\begin{tabular}{llllllllll}
\toprule
    & \multicolumn{3}{l}{ALM}     & \multicolumn{3}{l}{ADMM}    & \multicolumn{3}{l}{Gurobi} \\
\hline
S   & Gap    & Iter. & Time (s)   & Gap    & Iter. & Time (s)   & Gap      & Node   & Time (s)   \\
\hline
21  & 0.00\% & 37        & 9.88   & 0.00\% & 37        & 9.06   & 0.00\%   & 1      & 0.35   \\
31  & 0.00\% & 37        & 19.21  & 0.00\% & 37        & 18.47  & 0.00\%   & 1      & 0.78   \\
41  & 0.00\% & 37        & 33.81  & 0.00\% & 37        & 33.11  & 0.00\%   & 1      & 1.55   \\
51  & 0.00\% & 37        & 49.75  & 0.00\% & 37        & 50.12  & 0.00\%   & 1      & 2.77   \\
61  & 0.00\% & 37        & 72.92  & 0.00\% & 37        & 71.95  & 0.00\%   & 1      & 4.88   \\
71  & 0.00\% & 37        & 95.15  & 0.00\% & 37        & 98.80  & 0.00\%   & 1      & 8.35   \\
81  & 0.00\% & 37        & 123.69 & 0.00\% & 37        & 127.82 & 0.00\%   & 1      & 15.98  \\
91  & 0.00\% & 37        & 157.53 & 0.00\% & 37        & 161.15 & 0.00\%   & 1      & 22.04  \\
101 & 0.00\% & 37        & 195.81 & 0.00\% & 37        & 201.96 & 0.00\%   & 1      & 31.13  \\
\hline
Avg. & 0.00\% &37        & 84.19  & 0.00\% & 37        & 85.83 & 0.00\%   &   1     & 9.76  \\
\bottomrule
\end{tabular}
}
\end{table}

\begin{table}[h]
\captionsetup{justification=centering}
\caption{Comparison with Gurobi for $Z = [0,5]^2 \cap \Z^2$ and $T = \texttt{T}$ \\ with
parameters (1, 1.1, 100, 50, 200, 200)} \label{table: investment_ub5_T}
\resizebox{\columnwidth}{!}{
\begin{tabular}{llllllllll}
\toprule
     & \multicolumn{3}{l}{ALM}     & \multicolumn{3}{l}{ADMM}    & \multicolumn{3}{l}{Gurobi} \\
\hline
S    & Gap    & Iter.  & Time (s)   & Gap    & Iter.  & Time (s)   & Gap      & Node  & Time (s)   \\
\hline
21   & 0.00\% & 37        & 10.16  & 0.00\% & 37        & 9.26   & 0.00\%   & 2987  & 68.91   \\
31   & 0.00\% & 37        & 21.76  & 0.00\% & 37        & 20.96  & 0.01\%   & 2874  & 53.59   \\
41   & 0.00\% & 37        & 32.84  & 0.00\% & 37        & 32.94  & 0.00\%   & 3859  & 129.19  \\
51   & 0.00\% & 37        & 49.64  & 0.00\% & 37        & 50.10  & 0.01\%   & 3617  & 289.76  \\
61   & 0.00\% & 37        & 74.19  & 0.00\% & 37        & 74.81  & 0.01\%   & 3825  & 150.14  \\
71   & 0.00\% & 37        & 95.11  & 0.00\% & 37        & 99.47  & 0.00\%   & 2875  & 268.72  \\
81   & 0.00\% & 37        & 123.02 & 0.00\% & 37        & 126.08 & 0.01\%   & 3746  & 228.46  \\
91   & 0.00\% & 37        & 162.84 & 0.00\% & 37        & 173.63 & 0.01\%   & 2578  & 214.24  \\
101  & 0.00\% & 37        & 190.75 & 0.00\% & 37        & 208.48 & 0.01\%   & 2444  & 351.39  \\
\hline
Avg. & 0.00\% &  37         & 84.48  & 0.00\% & 37          & 88.41  & 0.01\%   & 3201      & 194.93  \\
\bottomrule
\end{tabular}
}
\end{table}

For $T = \texttt{I}$, Gurobi is able to find the optimal solution at the root node, and hence the number of explored node is 1 across all instances. We observe that Gurobi invokes many heuristics at the root node, while another solver, HiGHS, does not report optimality at the root node for any instance in Table \ref{table: investment_ub5_I}. The proposed ALM and ADMM are slower, partly because a large number of subproblems in $x$ are solved sequentially in our implementation. Nevertheless, for all instances, both ALM and ADMM close the gap with the same optimal objective value as Gurobi. For $T = \texttt{T}$, the problems become harder. Gurobi explored a few thousands of nodes before reporting optimal solutions. In contrast, ALM and ADMM are able to locate and verify optimality around 55\% faster than Gurobi on average, even though recourse subproblems are solved sequentially. 

\begin{table}[h]
\captionsetup{justification=centering}
\caption{Comparison with Gurobi for $Z = [0,10]^2 \cap \Z^2$ and $T = \texttt{T}$ \\ with parameters (1, 1.1, 100, 100, 200, 0.01)} \label{table: investment_ub10_T}
\resizebox{\columnwidth}{!}{
\begin{tabular}{llllllllll}
\toprule
     & \multicolumn{3}{l}{ALM}     & \multicolumn{3}{l}{ADMM}    & \multicolumn{3}{l}{Gurobi} \\
\hline
S    & Gap    & Iter. & Time (s)  & Gap    & Iter. & Time (s)   & Gap     & Node   & Time (s)   \\
\hline
21   & 0.00\% & 107       & 38.21  & 0.00\% & 99        & 42.07  & 0.00\%  & 2103   & 14.71   \\
31   & 0.00\% & 107       & 74.03  & 0.00\% & 105       & 88.35  & 0.62\%  & 12353  & 1800.01 \\
41   & 0.00\% & 107       & 113.77 & 0.00\% & 101       & 137.30 & 0.05\%  & 43435  & 1800.01 \\
51   & 0.00\% & 107       & 161.41 & 0.00\% & 102       & 209.29 & 0.16\%  & 30359  & 1800.02 \\
61   & 0.00\% & 107       & 234.30 & 0.00\% & 105       & 308.62 & 0.51\%  & 10681  & 1800.01 \\
71   & 0.00\% & 107       & 298.71 & 0.00\% & 102       & 396.42 & 0.02\%  & 37002  & 1800.03 \\
81   & 0.00\% & 107       & 382.68 & 0.00\% & 102       & 514.50 & 0.46\%  & 11571  & 1800.01 \\
91   & 0.00\% & 107       & 497.18 & 0.00\% & 105       & 692.33 & 0.69\%  & 11114  & 1800.02 \\
101  & 0.00\% & 107       & 585.47 & 0.00\% & 102       & 826.80 & 0.03\%  & 39720  & 1800.05 \\
\hline 
Avg. & 0.00\% & 107          & 265.08 & 0.00\% & 103           & 357.30 & 0.28\%  & 22038      & 1601.65 \\
\bottomrule
\end{tabular}
}
\end{table}

Another interesting observation is that, ALM and ADMM always terminate at the 37th iteration. Note that the feasible region $Z = [0,5]^2 \cap \Z^2$ consists of 36 points in $\R^2$. The proposed algorithms enumerate the feasible region in the first 36 iterations, and use one more iteration to verify optimality. Though this worst-case complexity is not very promising, we further enlarge the feasible region of $z$ to  $Z = [0,10]^2 \cap \Z^2$ so that there are 121 feasible solutions. We report the same metrics in Table \ref{table: investment_ub10_T}. The proposed ALM and ADMM terminate successfully in less than 121 iterations for all instances and take 265.08 and 357.30 seconds on average, respectively. In contrast, without structural knowledge on the problem data, Gurobi only solves the smallest instance within 1800 seconds.  

\subsection{Stochastic Server Location Problems }  \label{sec: sslp}
The stochastic server location problem (SSLP) \cite{chen2022generating, ntaimo2005million} is a classic two-stage MILP that can be cast as follows: 
\begin{align*}\label{eq: sslp}
    \min_{z, \{x^p,s^p\}_{p\in [P]}}   \quad & \sum_{j=1}^m c_j z_j + \sum_{p\in [P]} \mathrm{prob}_p\left( \sum_{j=1}^m q_{0j} s^p_j - \sum_{i=1}^n \sum_{j=1}^m q_{ij} x^p_{ij} \right) \\
    \mathrm{s.t.} \quad &  \sum_{i=1}^n d_{ij} x^{p}_{ij} \leq u z_j + s^p_{j}, \quad  j\in [m], ~p \in [P], \\
    & \sum_{j=1}^m x^{p}_{ij} = h^p_i, \quad \quad \quad \quad \quad  i\in [n], ~p \in [P], \\
    & z_j, x^p_{ij} \in \{0,1\},  s^p_j \geq 0,   \quad  i \in [n],~j \in [m], ~p \in [P].
\end{align*}
The problem aims to allocate servers to $m$ potential sites and meet the demand of $n$ potential clients. 
In the first stage, a decision maker needs to allocate servers at $m$ potential sites ($z_j$ = 1 if and only if a server is located at site $j$) associated with a cost $c_j$ for $j \in [m]$. Then in each scenario $p$ in the second stage, the availability of client $i$ is observed and expressed by a vector $h^p$ with $h^p_i = 1$ if and only if client $i$ is present in scenario $p$. The variable $x^p_{ij}=1$ if and only if client $i$ is served at site $j$ in scenario $p$. Each allocated server has $u$ units of resource, and client $i$ uses $d_{ij}$ units of resource at site $j$ and generate revenue $q_{ij}$;  shortage of resource at site $j$ is modeled by a continuous variable $s^p_j\geq 0$ and penalized by a cost $q_{0j}$. 

We generate data the same way as in \cite{chen2022generating}. Each $c_j$ is uniformly sampled from $\{40, 41, \cdots, 80\}$, each $d_{ij}$ is uniformly sampled from $\{0, 1, \cdots, 25\}$, and each $h^p_i$ is 0 or 1 with equal probability; we then set $\mathrm{prob}_p = 1/P$, $q_{0j} = 1000$, and $u = \frac{1}{m} \sum_{j=1}^m \sum_{i=1}^n d_{ij}$. We fix the number of second-stage scenarios to $P = 50$, and experiment with two sets of $(m,n)$ pairs. In the first set, the number of potential servers is relatively small: $\texttt{mSmall}= \{(5, 100), (10, 100), (15, 100)\}$; in the second set, we test on problem scales considered in  \cite{chen2022generating}: $\texttt{mLarge}= \{(20, 100), (30, 70), (40, 50), (50, 40)\}$. We limit ALM and ADMM iterations by 2000, and set a time limit of 7200 seconds for all solution methods. 

The results are presented in Tables \ref{table: sslp_small} and \ref{table: sslp_large}. On \texttt{mSmall} instances, the average gap of 9 instances is 1.81\% for ALM and 1.65\% for ADMM, while HiGHS has a slightly higher gap of 2.2\%. HiGHS also solves two more instances to optimality within the time limit. The \texttt{mLarge} instances are more challenging for the proposed methods. ALM and ADMM obtain an average of gap of 22.09\% and 14.41\%, respectively, and HiGHS achieves a better average gap of 6.45\%. We also observe that ADMM tends to perform better than ALM. Such an advantage might due to 1) historical cuts are effectively preserved in ADMM, and 2) linear terms in AL cuts can help the method escape non-optimal regions faster. 

We note that the state-of-the-art approach by Chen and Luedtke \cite{chen2022generating}  reports optimality on \texttt{mLarge} instances with an average solution time of 1469 seconds for $P = 50$ and 4348 seconds for $P = 200$. While not immediately comparable to theirs, our results suggest that the proposed methods can be preferable over a black-box MILP solver, at least when the dimension or feasible region of $Z$ is not too large. Directions for further acceleration include parallel implementation of $x$-subproblems, instance-specific parameter tuning (note that we fix a set of parameters in each table, while dual updates can affect the performance of AL-based methods drastically), and combination with existing linear cuts. 

\begin{table}[h]
\captionsetup{justification=centering}
\caption{Comparison with HiGHS on \texttt{mSmall} instances \\ with parameters (1, 1.25, 50, 50, 50, 50)} \label{table: sslp_small}
\resizebox{\columnwidth}{!}{
\begin{tabular}{cclllllllll}
\toprule 
          &          & \multicolumn{3}{l}{ALM}  & \multicolumn{3}{l}{ADMM} & \multicolumn{3}{l}{HiGHS} \\
\hline 
$m$-$n$-$P$     & Instance & Gap    & Iter. & Time (s)    & Gap    & Iter. & Time (s)    & Gap     & Node  & Time (s)   \\
\hline 
          & 1        & 0.00\% & 86    & 225.73  & 0.00\% & 17    & 25.65   & 0.00\%  & 176   & 78.56   \\
5-100-50  & 2        & 0.00\% & 81    & 221.86  & 0.00\% & 42    & 81.25   & 0.00\%  & 955   & 213.43  \\
          & 3        & 0.00\% & 135   & 570.40  & 0.00\% & 54    & 82.59   & 0.00\%  & 735   & 111.68  \\
\hline 
          & 1        & 1.53\% & 2000  & 4381.55 & 1.85\% & 2000  & 3531.80 & 0.00\%  & 3266  & 1958.28 \\
10-100-50 & 2        & 3.15\% & 2000  & 4728.14 & 2.37\% & 2000  & 3900.31 & 6.31\%  & 17918 & 7200.39 \\
          & 3        & 2.18\% & 2000  & 4822.01 & 0.55\% & 2000  & 3769.56 & 0.01\%  & 3513  & 1782.54 \\
\hline 
          & 1        & 3.90\% & 1529  & 7200.00 & 3.68\% & 2000  & 7002.99 & 5.35\%  & 9178  & 7200.00 \\
15-100-50 & 2        & 2.28\% & 1490  & 7200.00 & 1.49\% & 1387  & 7200.00 & 4.81\%  & 9781  & 7200.08 \\
          & 3        & 3.22\% & 1790  & 7200.00 & 4.89\% & 1776  & 7200.00 & 3.50\%  & 14453 & 7200.01 \\
\hline
Avg.      &          & 1.81\% & 1235     & 4061.08 & 1.65\% &   1253    & 3643.79 & 2.22\%  &  6664     & 3660.55 \\
\bottomrule
\end{tabular}
}
\end{table}

\begin{table}[h]
\captionsetup{justification=centering}
\caption{Comparison with HiGHS on \texttt{mLarge} instances \\ with parameters (1, 1.25, 50, 50, 20, 20)} \label{table: sslp_large}
\resizebox{\columnwidth}{!}{
\begin{tabular}{cclllllllll}
\toprule
                             &          & \multicolumn{3}{l}{ALM}       & \multicolumn{3}{l}{ADMM}      & \multicolumn{3}{l}{HiGHS} \\
\hline
$m$-$n$-$P$                        & Instance & Gap     & Iter. & Time (s)    & Gap     & Iter. & Time (s)     & Gap      & Node & Time (s)     \\
\hline
                             & 1        & 7.41\%  & 269       & 7200.00 & 5.48\%  & 516       & 7200.00 & 8.94\%   & 6680 & 7200.05 \\
20-100-50                    & 2        & 8.59\%  & 640       & 7200.00 & 7.37\%  & 1244      & 7200.00 & 7.29\%   & 5276 & 7200.07 \\
                             & 3        & 9.37\%  & 351       & 7200.00 & 7.64\%  & 895       & 7200.00 & 6.89\%   & 5460 & 7200.04 \\
\hline
                             & 1        & 25.45\% & 290       & 7200.00 & 15.89\% & 738       & 7200.00 & 8.32\%   & 3546 & 7200.03 \\
30-70-50                     & 2        & 20.69\% & 209       & 7200.00 & 17.99\% & 558       & 7200.00 & 13.08\%  & 2892 & 7200.01 \\
                             & 3        & 24.75\% & 229       & 7200.00 & 8.57\%  & 189       & 7200.00 & 3.44\%   & 2604 & 7200.02 \\
\hline
                             & 1        & 21.87\% & 197       & 7200.00 & 24.26\% & 382       & 7200.00 & 5.66\%   & 1634 & 7200.04 \\
40-50-50                     & 2        & 36.30\% & 187       & 7200.00 & 22.17\% & 359       & 7200.00 & 9.68\%   & 1942 & 7200.20 \\
                             & 3        & 18.79\% & 196       & 7200.00 & 9.25\%  & 332       & 7200.00 & 2.71\%   & 1779 & 7200.21 \\
\hline
                             & 1        & 44.22\% & 170       & 7200.00 & 27.23\% & 286       & 7200.00 & 8.14\%   & 1192 & 7200.10 \\
50-40-50 & 2        & 30.11\% & 71        & 7200.00 & 7.98\%  & 306       & 7200.00 & 0.00\%   & 117  & 2538.02 \\
\multicolumn{1}{l}{}         & 3        & 17.59\% & 167       & 7200.00 & 19.07\% & 314       & 7200.00 & 3.23\%   & 1435 & 7200.01 \\
\hline
\multicolumn{1}{l}{Avg.}     &          & 22.09\% & 248       & 7200.00 & 14.41\% & 510       & 7200.00 & 6.45\%   & 2880 & 6811.57 \\
\bottomrule
\end{tabular}
}
\end{table}

{
\subsection{A Class of Random MILPs} \label{sec: random}
Our experiments in the previous two subsections suggest that the scheme of adding nonconvex cuts is indeed practical, and can be even preferable, when the feasible region of $Z$ is not too large and subproblems in $x$ are relatively easy. To further demonstrate how the proposed methods can take advantages of such structures, we consider a class of MILPs in the form of \eqref{eq: MIP} generated as follows. Given an integer $P > 0$, for each $p\in [P]$, we create a polytope $X_p = \{ x \in [0,2]^{50}:~ E_p x = f_p , x_p \in \Z^{30} \times \R^{20}\}$ and an objective vector $c_p \in \R^{50}$. The matrix $E_p \in \R^{30 \times 50}$ and the vector $c_p$ have standard Gaussian entries, and $f_p = E_p \bar{x}_p \in \R^{30}$,  where $\bar{x}_i$ is uniformly sampled from $\{0,1,2\}$ for $i \in \{1,\cdots, 30\}$ and $[0,2]$ for $i \in \{31, \cdots, 50\}$. Then from each block $p$, we introduce a copy of the first three components of $x_p$, denoted by $(z_{p1}, z_{p2}, z_{p3})$.  We then denote all copied variables by $z$, and let $Z = \{z \in [0,2]^{3P} \cap \Z^{3P}:~Gz = h\}$, where $G \in \R^{(3P-50) \times 3P}$ has standard Gaussian entries, and $h = G\bar{z}$, where each component of $\bar{z}$ has the same value as its copy in $\bar{x}_p$ so that the problem is feasible. By construction, the coupling constraints $Ax+Bz = 0$ has the form $-[x_{p}]_i + z_{pi} = 0$ for $i\in [3]$, $p\in [P]$, where $[x_{p}]_i$ denotes the $i$-th component of $x_p$. The objective vector of $z$ is set to zeros. Written in its extensive form \eqref{eq: MIP}, the problem has $20P$ continuous variables, $33P$ integer variables, and $36P - 50$ equality constraints. 
}

For each $P \in \{50, 100, 200, 500\}$, we generate 3 instances and report results in Table \ref{table: random}. The proposed ALM and ADMM successfully terminate for all 12 instances with zero duality gaps, while Gurobi fails to find feasible solutions when $P\in \{200, 500\}$ in 1800 seconds. We do not report iterations of ALM and ADMM because both of them terminate in exactly 2 iterations for all generated instances: the first iteration finds the optimal solution, and the second iteration verifies the solution is indeed globally optimal by adding a nonconvex cut at the same point. 

Note that the matrix $G$ is nearly square as $P$ increases. Our construction deliberately reduces the number of feasible solutions in $Z$ so that once a feasible (probably optimal as well) $z$ is found, the rest problem in $x$ can be easily solved. We note that other linear cuts, i.e., Benders and Lagrangian cuts, should also terminate the algorithm in two iterations: since the optimal solution is found in the first iteration, which cannot be cut off by valid cuts, the second iteration should close the gap. On the other hand, Gurobi as a general-purpose solver does not pass such structural information to branch-and-bound. The results have no intention to claim the superiority of ALM and ADMM over Gurobi, or any MILP solvers, but rather demonstrate that the proposed methods can significantly benefit from problem structures.  

\begin{table}[h]
\captionsetup{justification=centering}
\caption{Comparison with Gurobi on a class of MILPs with random data \\ with parameters (1, 1.1, 100, 50, 200, 200)} \label{table: random}
\resizebox{\columnwidth}{!}{
\begin{tabular}{cclllllll}
\toprule
                     &          & \multicolumn{2}{l}{ALM} & \multicolumn{2}{l}{ADMM} & \multicolumn{3}{l}{Gurobi}   \\
\hline
P                    & Instance  & Gap        & Time (s)      & Gap        & Time (s)       & Gap    & Node  & Time (s)    \\
\hline 
 & 1        & 0.00\%     & 8.88       & 0.00\%     & 8.69        & 0.00\% & 38369 & 146.88  \\
50                   & 2        & 0.00\%     & 8.97       & 0.00\%     & 8.71        & 0.00\% & 50905 & 163.85  \\
                     & 3        & 0.00\%     & 7.98       & 0.00\%     & 7.51        & 0.00\% & 37550 & 264.91  \\
\hline 
                     & 1        & 0.00\%     & 18.78      & 0.00\%     & 19.15       & 0.00\% & 31679 & 674.83  \\
100                  & 2        & 0.00\%     & 20.90      & 0.00\%     & 21.38       & 0.00\% & 40892 & 863.40  \\
                     & 3        & 0.00\%     & 17.78      & 0.00\%     & 17.78       & 0.00\% & 28346 & 507.21  \\
\hline 
                     & 1        & 0.00\%     & 55.71      & 0.00\%     & 53.47       & -      & 20005 & 1800.27 \\
200                  & 2        & 0.00\%     & 52.24      & 0.00\%     & 51.68       & -      & 30363 & 1800.17 \\
                     & 3        & 0.00\%     & 53.93      & 0.00\%     & 53.84       & -      & 25722 & 1800.56 \\
\hline 
                     & 1        & 0.00\%     & 1108.85    & 0.00\%     & 1108.05     & -      & 1317  & 1801.52 \\
500                  & 2        & 0.00\%     & 326.92     & 0.00\%     & 328.01      & -      & 1447  & 1801.45 \\
                     & 3        & 0.00\%     & 448.33     & 0.00\%     & 442.67      & -      & 3945  & 1800.36 \\
\hline
Avg. &          & 0.00\%     & 177.44     & 0.00\%     & 176.75      &   -     &  25878    & 1118.78 \\
\bottomrule
\end{tabular}
}
\end{table}

\section{Concluding Remarks} \label{Section: Conclusion}
In this paper, we study generic MILP problems with two blocks of variables $x$ and $z$. We propose an algorithm named AUSAL that alternatively updates $x$ and $z$ in the augmented Lagrangian function, which can be further directly embedded into the penalty method or ALM. We also propose a single-looped ADMM variant, which is built upon the AL cut introduced in \cite{ahmed2019stochastic,zhang2019stochastic}. Different from the procedure used in the previous two references, we obtain an AL cut by solving a single augmented Lagrangian relaxation in variable $x$; compared to existing ADMM works, our ADMM variant allows a more flexible update scheme for the dual variable and penalty, and is guaranteed to converge to a global optimal solution with iteration complexity estimates. When certain block-angular structure is present, the update of $x$ can be further decomposed and solved in parallel in both algorithms. 

We conduct numerical experiments on three classes of MILPs and demonstrate that the proposed methods exhibit advantages on structured problems over the state-of-the-art MILP solvers. Admittedly, the update of $z$ variable in both algorithms requires solving a MILP problem with an increasing size of variables and constraints, which can be the computational bottleneck for large and dense problems. We are interested in investigating more practical subproblem oracles, i.e., managing a controllable size of cuts, or new methodologies to approximate the dependency between $x$ and $z$. We leave these in the future work.

\appendix
\section{Missing Proofs} \label{appendix: proof}
\subsection{Proof of Theorem \ref{thm: compexlity of lip minimization}} \label{sec: proof of lip minimization complexity}
We first present a useful lemma. 
\begin{lemma}\label{lemma: local_opt}
	Let $z^k$ be an iterate generated by Algorithm \ref{alg: lip minimization}. Then $Q(z) - \underline{R}(z;Z_{k}) \leq \epsilon$ for all $z\in Z$ such that $\|z-z^k\|_1\leq \epsilon/(2L)$.
\end{lemma}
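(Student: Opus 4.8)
The plan is to exploit the single fact that, by the update rule in Algorithm~\ref{alg: lip minimization}, the point $z^k$ is added to the sampled set, i.e. $z^k \in Z_k$. Consequently the reverse norm cut $r(\cdot;z^k)$ defined in \eqref{def: reversed norm} is one of the cuts participating in the supremum \eqref{eq: lower approx} defining $\underline{R}(\cdot;Z_k)$, which immediately yields the pointwise lower bound
\begin{equation*}
	\underline{R}(z;Z_k) \;\geq\; r(z;z^k) \;=\; Q(z^k) - K\|z-z^k\|_1 \qquad \text{for all } z\in Z.
\end{equation*}

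Next I would invoke the $K$-Lipschitz continuity of $Q$ with respect to the $\ell_1$-norm to get the matching upper bound $Q(z) \leq Q(z^k) + K\|z-z^k\|_1$. Subtracting the two displays gives
\begin{equation*}
	Q(z) - \underline{R}(z;Z_k) \;\leq\; \bigl(Q(z^k) + K\|z-z^k\|_1\bigr) - \bigl(Q(z^k) - K\|z-z^k\|_1\bigr) \;=\; 2K\|z-z^k\|_1 ,
\end{equation*}
and restricting to $z\in Z$ with $\|z-z^k\|_1 \leq \epsilon/(2K)$ bounds the right-hand side by $\epsilon$, which is the claim.

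There is no real obstacle here: the entire argument is a two-line estimate combining one-sidedness of the cut with Lipschitz continuity. The only point that requires a modicum of care is the bookkeeping in the algorithm, namely confirming that the index set used in $\underline{R}(\cdot;Z_k)$ genuinely contains $z^k$ (as opposed to only $Z_{k-1}$), so that $r(\cdot;z^k)$ may legitimately be used as a minorant of $\underline{R}(\cdot;Z_k)$; this is exactly the line $Z_k \gets Z_{k-1}\cup\{z^k\}$ in Algorithm~\ref{alg: lip minimization}. I would also remark that the lemma is the local ``cut-coverage'' estimate that will later drive the covering argument in the proof of Theorem~\ref{thm: compexlity of lip minimization}: each iterate $z^k$ certifies an $\ell_1$-ball of radius $\epsilon/(2K)$ on which the current model is $\epsilon$-accurate.
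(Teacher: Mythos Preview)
Your proposal is correct and follows essentially the same argument as the paper: both use that $z^k\in Z_k$ gives $\underline{R}(z;Z_k)\geq Q(z^k)-K\|z-z^k\|_1$, combine this with the $K$-Lipschitz bound $Q(z)\leq Q(z^k)+K\|z-z^k\|_1$, and conclude $Q(z)-\underline{R}(z;Z_k)\leq 2K\|z-z^k\|_1\leq\epsilon$. Your observation about the bookkeeping (that $Z_k$ indeed contains $z^k$ via the update line) is exactly the point the paper relies on implicitly.
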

\begin{proof}
	For all $z \in Z$ and $\|z-z^k\|_1\leq \epsilon/(2L)$, we have $Q(z) - \underline{R}(z; Z_{k}) \leq  Q(z) - Q(z^k) + L \|z-z^k\|_1 \leq  2L \|z-z^k\|_1 \leq \epsilon,$
	where the first inequality is due to $ \underline{R}(z; Z_{k})\geq Q(z^k)-L \|z-z^k\|_1$, and the second inequality is due to $Q$ being $L$-Lipschitz. 
\end{proof}
\begin{proof}[Proof of Theorem \ref{thm: compexlity of lip minimization}]
    Let $\{z^{k_j}\}_{j\in \N}$ be a subsequence convergent to some optimal solution $z^*$ of \eqref{eq: lip minimization} {if we do not terminate Algorithm \ref{alg: lip minimization}}. Since $\lim_{j\rightarrow \infty}z^{k_j}=z^*$, we have $\|z^{k_{j+1}} - z^{k_j}\|_1 \leq \epsilon/(2L)$ for all large enough $j\in \N$. Notice $k_{j+1}\geq k_j+1$; by Lemma \ref{lemma: local_opt}, it follows $Q(z^{k_{j+1}}) - t^{k_{j+1}} \leq Q(z^{k_{j+1}}) - \underline{R}(z^{k_{j+1}}; Z_{k_{j}})\leq \epsilon$. Now let $T$ be the first index such that $Q(z^T)-t^T \leq \epsilon$, which is sufficient to ensure the termination of Algorithm \ref{alg: lip minimization}. For all $0\leq i < j \leq T-1$, we claim that $\|z^i-z^j\|_1> \epsilon/(2L)$: suppose not, then Lemma \ref{lemma: local_opt} suggests that $Q(z^{j}) -t^j  \leq Q(z^{j}) - \underline{R}(z^j; Z_{i}) \leq \epsilon$, contradicting to the choice of $T$. Let $r = \epsilon/(4L)$. Since $z^i \in Z$ for all $0\leq i\leq T-1$ and $Z\subseteq \overline{B}_1(\bar{z}; R)$, $\bigcup_{i=0}^{T-1} \overline{B}_1(z^i;r) \subseteq Z + \overline{B}_1(0;r) \subseteq \overline{B}_1(\bar{z}; R+r);$ since the balls $\{\overline{B}_1(z^i;r)\}_{0\leq i\leq T-1}$ are disjoint, it follows
    \begin{equation*}
         \mathrm{Vol}\left(\bigcup_{i=0}^{T-1} \overline{B}_1(z^i;r)\right) = T \mathrm{Vol}\left((\overline{B}_1(0;r)\right)\leq \mathrm{Vol}\left(\overline{B}_1(\bar{z}; R+r)\right),
    \end{equation*}
    where $\mathrm{Vol}(\cdot)$ returns the volume of the argument, and thus 
    \begin{equation*}
        T \leq \frac{\mathrm{Vol}(\overline{B}_1(\bar{z};R+r))}{\mathrm{Vol}(\overline{B}_1(0;r))} = \left(\frac{R+r}{r}\right)^d = ( 1 + 4LR\epsilon^{-1})^d.
    \end{equation*}
    This completes the proof. 
\end{proof}

\subsection{Proof of Theorem \ref{thm: dual complexity}}\label{sec: proof of dual compex}
We first state a standard lemma regarding the progress in objective value.
\begin{lemma}\label{lemma: dual complexity}
	Let $\{(\lambda^k,\rho^k)\}_{k\in \N}$ be the sequence of iterates generated by Algorithm \ref{alg2}. Then for all $K\geq 1$, it holds that
	\begin{align*}
		\min_{k\in [K]} p^* - d(\lambda^k,\rho^k )\leq  \frac{M}{\sqrt{2}} \frac{d_0^2 + \sum_{k=1}^K \tau_k^2}{\sum_{k=1}^K \tau_k} + \epsilon_p.	\end{align*}
	\end{lemma}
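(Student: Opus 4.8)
The plan is to recognize Algorithm~\ref{alg2} as an inexact, projection-free subgradient method for the convex function $-d$, and then run the textbook ``best-iterate'' estimate, the only real care being the bookkeeping between the $\ell_1$ and $\ell_2$ norms. Write $w^k=(\lambda^k,\rho^k)$, $r_k=\|Ax^k+Bz^k\|_1$, and $g_k=-(Ax^k+Bz^k,\,r_k)\in\R^{m+1}$. By \eqref{eq: dual_grad}, $g_k\in\partial_{\epsilon_p}(-d)(w^k)$, and lines~\ref{alg2: step}--\ref{alg2: actual step} of Algorithm~\ref{alg2} read exactly $w^{k+1}=w^k-\alpha_k g_k$ with $\alpha_k=\tau_k/(\sqrt2\,r_k)$ (recall the standing assumption $r_k>0$, so the iteration is well defined).

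First I would fix an optimal dual point. By Theorem~\ref{thm: mip exact penaliztion} (together with Theorem~\ref{thm: exact}) we have $\sup d=p^*$ and $D^*=\Argmax d\neq\emptyset$; pick $w^*=(\lambda^*,\rho^*)\in D^*$ attaining the minimum in the definition of $d_0$, so $\|w^1-w^*\|_1=d_0$ and $-d(w^*)=-p^*$. Expanding the Euclidean distance gives
\[
\|w^{k+1}-w^*\|_2^2=\|w^k-w^*\|_2^2-2\alpha_k\langle g_k,w^k-w^*\rangle+\alpha_k^2\|g_k\|_2^2 ,
\]
and the $\epsilon_p$-subgradient inequality for $-d$ yields $\langle g_k,w^k-w^*\rangle\ge (-d)(w^k)-(-d)(w^*)-\epsilon_p=p^*-d(w^k)-\epsilon_p$.

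The key observation is that the normalization of $\alpha_k$ is chosen precisely to neutralize the residual term: since $\|Ax^k+Bz^k\|_2\le r_k$ we get $\|g_k\|_2^2\le 2r_k^2$, hence $\alpha_k^2\|g_k\|_2^2\le \tau_k^2$. Substituting and summing $k=1,\dots,K$ telescopes the distance terms and, using $\|w^{K+1}-w^*\|_2^2\ge0$ and $\|w^1-w^*\|_2\le\|w^1-w^*\|_1=d_0$, gives
\[
2\sum_{k=1}^K\alpha_k\bigl(p^*-d(w^k)-\epsilon_p\bigr)\le d_0^2+\sum_{k=1}^K\tau_k^2 .
\]
If $\delta_K:=\min_{k\in[K]}\bigl(p^*-d(\lambda^k,\rho^k)\bigr)\le\epsilon_p$, the claimed bound holds trivially because its right-hand side is at least $\epsilon_p$; otherwise $p^*-d(w^k)-\epsilon_p\ge\delta_K-\epsilon_p>0$ for every $k$, so the left-hand side is at least $2(\delta_K-\epsilon_p)\sum_k\alpha_k$. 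Finally $r_k\le M$ gives $\alpha_k\ge\tau_k/(\sqrt2 M)$, hence $\sum_k\alpha_k\ge(\sqrt2 M)^{-1}\sum_k\tau_k$, and dividing through produces $\delta_K-\epsilon_p\le\frac{M}{\sqrt2}\cdot\frac{d_0^2+\sum_k\tau_k^2}{\sum_k\tau_k}$, i.e.\ the assertion.

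I do not expect a genuine obstacle: this is the classical subgradient-method bound. The only points needing attention are (i) checking that the vector in \eqref{eq: dual_grad} is an $\epsilon_p$-subgradient of $-d$ with the sign matching Algorithm~\ref{alg2}'s update, so that the step is a descent step for $-d$; (ii) the two norm conversions $\|\cdot\|_2\le\|\cdot\|_1$, needed because $d_0$ and $M$ are stated in the $1$-norm while the Lyapunov function is Euclidean, and because these are exactly where the constant $1/\sqrt2$ and the bound $\alpha_k^2\|g_k\|_2^2\le\tau_k^2$ come from; and (iii) isolating the trivial regime $\delta_K\le\epsilon_p$ so that no sign issue arises when passing from the summed inequality to the minimum over $k\in[K]$.
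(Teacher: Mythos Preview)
Your proposal is correct and follows essentially the same route as the paper: set $w^k=(\lambda^k,\rho^k)$, use the $\epsilon_p$-subgradient inclusion \eqref{eq: dual_grad}, expand $\|w^{k+1}-w^*\|_2^2$, exploit $\|g_k\|_2^2\le 2r_k^2$ so that $\alpha_k^2\|g_k\|_2^2\le\tau_k^2$, telescope, convert $\|w^1-w^*\|_2\le\|w^1-w^*\|_1=d_0$, and finish with $\alpha_k\ge\tau_k/(\sqrt2 M)$. Your explicit case split on $\delta_K\le\epsilon_p$ is more careful than the paper, which simply factors $\min_k(p^*-d(w^k)-\epsilon_p)$ out of the weighted sum; that step is in fact valid without the case split because the right-hand side $d_0^2+\sum_k\tau_k^2$ is nonnegative, but your treatment is not wrong.
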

\begin{proof}
	To simplify notation, we denote $w^k = (\lambda^k,\rho^k)$, and denote $w^*=(\lambda^*,\rho^*)$ to be the maximizer in the definition of $d_0$. We also denote the $\epsilon$-subgradient of $(-d)(w^k)$ by $g^k$, and it holds
	\begin{equation}\label{eq: dual subgradient bd}
	    \|g^k\|^2_2 = \|Ax^k+Bz^k\|_2^2 + \|Ax^k+Bz^k\|_1^2 \leq 2\|Ax^k+Bz^k\|^2_1\leq 2M^2.
	\end{equation}
	Use the fact that $g^k\in \partial_\epsilon (-d)(w^k)$, we have $\|w^{k+1}-w^*\|^2_2 =  \|w^{k} - \alpha_k g^k-w^*\|^2_2 \leq \|w^k - w^*\|^2_2 + \alpha_k^2 \|g_k\|^2_2 - 2\alpha_k (p^* - d(w^k)-\epsilon_p)$. Summing over $k=1,\cdots, K$, we see $2 \left(\min_{k\in [K]} p^* - d(\lambda^k,\rho^k) -\epsilon_p\right) \sum_{k=1}^K \alpha_k$ is bounded from above by
	\begin{align}
        & 2\sum_{k=1}^K \alpha_k (p^* - d(\lambda^k,\rho^k) -\epsilon_d) \leq  \|w^1-w^*\|_2^2 + \sum_{k=1}^K \alpha_k^2 \|g^k\|_2^2 \notag \\
        \leq & \|w^1-w^*\|_1^2 + \sum_{k=1}^K \alpha_k^2 2\|Ax^k+Bz^k\|_1^2 \notag =  d_0^2 + \sum_{k=1}^K \tau_k^2,	
    \end{align}
	which further implies 
	\begin{align*}
		\min_{k\in [K]} p^* - d(\lambda^k,\rho^k )\leq \frac{d_0^2 + \sum_{k=1}^K \tau_k^2}{2\sum_{k=1}^K \alpha_k} + \epsilon_p \leq \frac{M}{\sqrt{2}} \frac{d_0^2 + \sum_{k=1}^K \tau_k^2}{\sum_{k=1}^K \tau_k} + \epsilon_p.
	\end{align*} 
\end{proof}

\begin{proof}[Proof of Theorem \ref{thm: dual complexity}]
	The first claim follows from Lemma \ref{lemma: dual complexity} and the choices of $\tau_i$ and $K$ so that:
	\begin{align*}
		\frac{M}{\sqrt{2}} \frac{d_0^2 + \sum_{k=1}^K \tau_k^2}{\sum_{k=1}^K \tau_k} =& \frac{M}{\sqrt{2}} \frac{2d_0^2}{\sqrt{K}d_0} = \frac{\sqrt{2} Md_0}{\sqrt{K}}\leq \epsilon_d.
	\end{align*}
	Let $\gamma_k =\alpha_k /\tau_k$. Recall the bound in \eqref{eq: dual subgradient bd}, and  we have $\gamma_k \|g^k\|_2\leq 1$;
	the second claim is then proved in {\cite[Theorem 7.4]{ruszczynski2011nonlinear}}. 
\end{proof}

\subsection{Proof of Theorem \ref{thm: finite terminate}}\label{sec: proof of finite term}
\begin{proof}[Proof of Theorem \ref{thm: finite terminate}]
   	Firstly notice that according to the penalty update, we have $\rho^k = \rho^1 + \sum_{j=2}^k \rho^{j}-\rho^{j-1} = \rho^1 + \sum_{j=2}^k \max\{\|\lambda^j\|_{\infty}, \alpha_j \|Ax^j+Bz^j\|_1 \} \geq \rho^1 +  \sum_{j=2}^k  \alpha_j \|Ax^j+Bz^j\|_1=   \rho^1 + (k-1)\tau.$
    For the purpose of contradiction, suppose $\|Ax^k+Bz^k\|_1 > \epsilon_p$ for all $k\in \N$, and thus Algorithm \ref{alg3} will generate an unbounded sequence $\{\rho^k\}_{k\in \N}$.
	Let $(\lambda^*,\rho^*)$ be an optimal solution to the dual problem \eqref{eq: dual problem}. Then we have $ \|\lambda^{k+1}-\lambda^*\|_2^2$ is bounded from above by
	\begin{align}
		  & \|\lambda^k- \lambda^*\|_2^2 + \alpha_k^2 \|Ax^k+Bz^k\|_2^2 + 2\alpha_k\left(d(\lambda^k, \rho^k) - p^* + \epsilon_p+ \|Ax^k+Bz^k\|_1 (\rho^*-\rho^k)\right)\notag  \\
		  & \leq \|\lambda^k- \lambda^*\|_2^2 + \alpha_k^2 \|Ax^k+Bz^k\|_2^2 + 2\alpha_k\left( \epsilon_p+ \|Ax^k+Bz^k\|_1 (\rho^*-\rho^k)\right),\notag 
	\end{align}
	where the inequality is due to \eqref{eq: dual_grad} and $d(\lambda^k,\rho^k)\leq p^*$. By the definition of $\alpha_k$ and the fact that $\|Ax^k+Bz^k\|_1 > \epsilon_p$, we further have
	\begin{align}
		\|\lambda^{k+1}-\lambda^*\|_2^2 
		\leq   \|\lambda^k-\lambda^*\|_2^2 + \tau^2 + 2\tau + 2\tau\rho^* - 2\tau \rho^k. \label{eq: dual_bound_3}
	\end{align}
	Notice that when $\rho^k \geq \rho^* + \tau/2 +1$, we have $\|\lambda^{k+1}-\lambda^*\|_2 \leq  \|\lambda^k-\lambda^*\|_2$, and thus the dual sequence $\lambda^k$ stays bounded; now letting $k\rightarrow \infty$ on \eqref{eq: dual_bound_3}, the left-hand side is nonnegative while the right-hand side goes to $-\infty$, which is a desired contradiction. 
\end{proof}

\section*{Acknowledgments}
We would like to thank the authors of \cite{camisa2018primal} for making their primal-decomposition code available and the authors of \cite{zhang2019stochastic} for the discussion on AL cuts. A part of this work was done during an internship of Alibaba (US) Innovation Research.
\bibliographystyle{siamplain}
\bibliography{ref.bib}
\end{document}